\pgfplotsset{compat=1.10}
\newtheorem{theorem}{Theorem}
\newtheorem{proposition}[theorem]{Proposition}
\newtheorem{lemma}[theorem]{Lemma}
\theoremstyle{remark}
\newtheorem{remark}[theorem]{Remark}
\newcommand{\ls}{\lesssim}
\newcommand{\gs}{\gtrsim}
\newcommand{\la}{\langle}
\newcommand{\ra}{\rangle}
\newcommand{\R}{\mathbb{R}}
\newcommand{\C}{\mathbb{C}}
\newcommand{\Z}{\mathbb{Z}}
\newcommand{\N}{\mathbb{N}}
\newcommand{\ep}{\epsilon}
\newcommand{\He}{\textbf{\textup{H}}}
\definecolor{light-gray1}{gray}{0.90}
\definecolor{light-gray2}{gray}{0.80}
\definecolor{light-gray3}{gray}{0.60}
\numberwithin{equation}{section}
\numberwithin{theorem}{section}
\numberwithin{table}{section}
\numberwithin{figure}{section}
\title[Strichartz estimates for Schrödinger equations and application]{Strichartz estimates for higher-order Schrödinger equations and their applications}
\author[Y. Hong]{Younghun Hong}
\address{Department of Mathematics, Chung-Ang University, Seoul 06974, Korea}
\email{yhhong@cau.ac.kr}
\author[C. Kwak]{Chulkwang Kwak}
\address{Department of Mathematics, Ewha Womans University, Seoul 03760, Korea}
\email{ckkwak@ewha.ac.kr}
\author[C. Yang]{Changhun Yang}
\address{Department of Mathematics, Chungbuk national University, Cheongju 28644 , Korea}
\email{chyang@chungbuk.ac.kr}
\begin{document}

\maketitle


\begin{abstract}
We consider the higher-order linear Schr\"odinger equations which are formal finite Taylor expansions of the linear pseudo-relativistic Schr\"odinger equation. In this paper, we establish global-in-time Strichartz estimates for these higher-order equations which hold uniformly in the speed of light. As nonlinear applications, we show that the higher-order Hartree(-Fock) equations approximate the corresponding pseudo-relativistic equation on an arbitrarily long time interval, with higher accuracy than the non-relativistic model. We also prove small data scattering for the higher-order nonlinear Schr\"odinger equations.  
\end{abstract}

\section{Introduction}

\subsection{Background}

We consider the pseudo-relativistic linear Schr\"odinger equation 
\begin{equation}\tag{pLS}\label{pLS}
i\hbar\partial_t \psi=\mathcal{H}^{(c)}\psi,
\end{equation}
where $\psi=\psi(t,x):\mathbb{R}\times\mathbb{R}^d\to\mathbb{C}$ is the wave function and the operator
$$\boxed{\quad\mathcal{H}^{(c)}=\sqrt{m^2c^4-c^2\hbar^2\Delta}-mc^2\quad}$$
is the Fourier multiplier of symbol $\sqrt{m^2c^4+c^2\hbar^2|\xi|^2}-mc^2$. Here, $m>0$ represents the particle mass, $\hbar$ is the reduced Plank constant, and $c\gg1$ denotes the speed of light. The operator $\mathcal{H}^{(c)}$ is called \textit{pseudo-relativistic} or \textit{semi-relativistic} in that it behaves both relativistically and non-relativistically depending on frequencies. Indeed, in the non-relativistic regime $|\xi|\ll\tfrac{mc}{\hbar}$, the Taylor series expansion yields
\begin{equation}\label{series expansion} 
\sqrt{\hbar^2c^2|\xi|^2+m^2c^4}-mc^2
=mc^2\left(\sqrt{1+\tfrac{\hbar^2|\xi|^2}{m^2c^2}}-1\right)=\tfrac{\hbar^2|\xi|^2}{2m}
-\tfrac{\hbar^4|\xi|^4}{8m^3c^2}+\cdots\approx\tfrac{\hbar^2|\xi|^2}{2m}.
\end{equation}

The pseudo-relativistic operator and associated linear and nonlinear models arise in various physics literature. For instance, the mean-field dynamics of relativistic fermion particles is described by the pseudo-relativistic Hartree-Fock equation
\begin{equation}\tag{pHF}\label{pNLHF}
    i\hbar\partial_t \psi_k =\mathcal{H}^{(c)}\psi_k+H\psi_k-F_k(\psi_k),\quad k=1,2,..., N,
\end{equation}
where $\psi_k=\psi_k(t,x):\mathbb{R}\times\mathbb{R}^3\to\mathbb{C}$, 
$$H\psi_k=\sum_{\ell=1}^N\left(\frac{\kappa}{|x|}*|\psi_\ell|^2\right)\psi_k$$
is the Hartree nonlinearity, and
$$F_k(\psi_k)=\sum_{\ell=1,\ell\neq k}^N\left(\frac{\kappa}{|x|}*(\overline{\psi_\ell}\psi_k)\right)\psi_\ell$$
is the Fock exchange term. A real constant $\kappa$ determines the strength of self-interaction among quantum particles; it is repulsive if $\kappa>0$, and attractive if $\kappa<0$. By the Pauli exclusion principle, $\psi_k$'s are assumed to be mutually $L^2$-orthogonal. Dropping the exchange term, the Hartree-Fock equation is reduced to the pseudo-relativistic Hartree equation 
\begin{align}\tag{pH}\label{pNLH}
    i\hbar\partial_t \psi_k =\mathcal{H}^{(c)}\psi_k+H\psi_k,\quad k=1,2,..., N.
\end{align}
Since the exchange term has lower-order in mean-field approximation, this Hartree model is considered as a simplified Hartree-Fock model. Without the orthogonal condition, the equation with $\kappa<-1$ describes the mean field dynamics of boson stars, so it is called the boson star equation. For the rigorous derivation of the semi-relativistic models, we refer to \cite{Lieb1984,Lieb1987,Elgart2007}. For the dynamics of the system, the Hartree(-Fock) equation is globally well-posed in the energy space $H^\frac12(\R^3)$ \cite{FL-2007, Lenzmann2007}. It is locally well-posed below the energy space \cite{Cho2006, Herr2014}. For dynamical properties, we refer to \cite{Lenzmann2007,Lenzmann2009,Lenzmann2011}.\\

For the relativistic models, an important question is to prove their non-relativistic limits $c\to\infty$. Note that by the convergence \eqref{series expansion} in low frequencies, it is expected that the pseudo-relativistic operator $\mathcal{H}^{(c)}$ in the above models ((pLS), (pHF) and (pH)) can be replaced by the non-relativistic one $-\frac{\hbar^2}{2m}\Delta$ in the non-relativistic regime, and that the non-relativistic linear Schr\"odinger equation and the Hartree(-Fock) equation are derived respectively. Indeed, proving the non-relativistic limits justifies consistency of the relativistic modification. On the other hand, it shows that non-relativistic models are good approximations to the pseudo-relativistic models, which are computationally extremely expensive due to the presence of the non-local operator $\mathcal{H}^{(c)}$.

Similar 
non-relativistic limits can be formulated for other types of relativistic models, and there have been numerous results on this subject. 
For instance, in the work of Machihara, Nakanishi and Ozawa \cite{MNO2002,Machihara2003}, the nonlinear Schrödinger equations are derived from the Klein-Gordon and the Dirac equations as non-relativistic limits. In \cite{Bechouche1998,Bechouche2004,Bechouche2005}, Bechouche, Mauser and Selberg established the convergence from the Dirac-Maxwell (resp., Klein-Gordon-Maxwell) system to the Vlasov-Poisson (resp., Schr\"odinger-Poisson) system. For the non-relativistic limits of the Dirac-Maxwell,  Klein-Gordon-Maxwell and Klein-Gordon-Zakharov systems, we refer to the work of Masmoudi and Nakanishi \cite{Masmoudi2002,Masmoudi2003, Masmoudi2005}.\\

For better accuracy to the non-local pseudo-relativistic equations, Carles and Moulay \cite{CM2012} introduced higher-order linear Schr\"odinger equations of the form
\begin{equation}\tag{hLS}\label{hLS}
i\hbar\partial_t\psi= \mathcal{H}_J^{(c)}\psi,
\end{equation}
where the operator $\mathcal{H}_J^{(c)}$ is a formal finite Taylor expansion of the pseudo-relativistic operator, that is, 
$$\boxed{\quad\mathcal{H}_{J}^{(c)}=-\sum_{j=1}^J\frac{\alpha(j)\hbar^{2j}}{m^{2j-1}c^{2j-2}}\Delta^{j}\quad}$$
with 
$$\alpha(j)=\frac{(2j-2)!}{j!(j-1)!2^{{2j-1}}}\quad (j\geq 1).$$
Note that such higher-order models include the non-relativistic Schr\"odinger equation $i\hbar\partial_t\psi=-\tfrac{\hbar^2}{2m}\Delta \psi$ and the fourth-order equation $i\hbar\partial_t \psi=(-\tfrac{\hbar^2}{2m}\Delta-\tfrac{\hbar^4}{8m^3c^2}\Delta^2\big)\psi$. In \cite{CLM2015}, Carles, Lucha and Moulay showed that the higher-order linear flow provides a more accurate approximation as $c\to\infty$ \cite[Theorem A.1]{CLM2015}, precisely, 
\begin{equation}\label{CLM linear approximation}
\|e^{it\mathcal{H}^{(c)}}\psi_0-e^{it\mathcal{H}_J^{(c)}}\psi_0\|_{L^2(\mathbb{R}^d)}\leq\frac{2T}{\hbar}\frac{\alpha(J+1)}{m^{2J+1}c^{2J}}\|\psi_0\|_{H^{2J+2}(\mathbb{R}^d)}.
\end{equation}
Moreover, employing the local-in-time Strichartz estimates \cite[Lemma 4.3]{CLM2015}, the authors developed a well-posedness theory for higher-order Hartree(-Fock) equations \cite[Theorem 4.9]{CLM2015}.

\subsection{Strichartz estimates}
In this paper, we establish ``global-in-time" Strichartz estimates for higher-order equations (hLS) which hold uniformly in the speed of light $c\geq1$. It turns out that such estimates are distinguished by odd and even orders of the Taylor expansion. For the statement, we call $(q,r)$ \textit{odd-admissible} if 
\begin{align}\label{o admissible pair}
2\le q,r\le \infty,\quad (q,r,d)\neq(2,\infty,2), \quad \frac 2q+\frac dr=\frac d2.
\end{align}
Our first main result asserts that for odd expansions, the uniform global-in-time Strichartz estimates hold for odd-admissible pairs.

\begin{theorem}[Strichartz estimates for \eqref{hLS}: odd case]
\label{Thm:Strichartz odd order HSE}
Let  $J\in 2\N-1$. Then, there exists a constant $A$, independent of $c\geq 1$, such that for any odd-admissible pairs $(q,r)$ and $(\tilde{q},\tilde{r})$, we have
\begin{align}\label{Strichartz for odd}
\big\| e^{-it \mathcal{H}_J^{(c)}}\psi_0\big\|_{L_t^q(\mathbb{R};L_x^r(\R^d))}
\le A \left( \frac{m}{\hbar}\right)^\frac1q\| \psi_0\|_{L^2(\R^d)}
\end{align}
and
\begin{align}\label{Inhomogeneous Strichartz for odd}
\Big\| \int_0^t e^{-i(t-s) \mathcal{H}_J^{(c)}}F(s)ds \Big\|_{L_t^q(\mathbb{R};L_x^r(\R^d))}\le
A^2  \left( \frac{m}{\hbar}\right)^\frac2q
\|F\|_{L_t^{\tilde{q}'}(\mathbb{R};L_x^{\tilde{r}'}(\R^d))}.
\end{align}
\end{theorem}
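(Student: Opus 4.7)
The plan is to reduce both estimates to a uniform-in-$c$ $L^1 \to L^\infty$ dispersive bound and invoke the Keel-Tao abstract machinery. Since $\mathcal{H}_J^{(c)}$ is a real (self-adjoint) Fourier multiplier, the propagator $e^{-it\mathcal{H}_J^{(c)}}$ is an $L^2$-isometry, so by Keel-Tao it suffices to prove the decay estimate
\begin{equation*}
\big\|e^{-it\mathcal{H}_J^{(c)}}\psi_0\big\|_{L^\infty(\R^d)} \;\leq\; C \Big(\frac{\hbar}{m}|t|\Big)^{-d/2}\|\psi_0\|_{L^1(\R^d)},
\end{equation*}
with $C$ independent of $c\geq 1$; the homogeneous estimate \eqref{Strichartz for odd} and its inhomogeneous counterpart \eqref{Inhomogeneous Strichartz for odd} then follow, the prefactor $(m/\hbar)^{1/q}$ being exactly the cost of the time rescaling $s=(\hbar/m)t$ that renormalises the dispersive rate to the standard $|s|^{-d/2}$.

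The dispersive bound reduces to a pointwise estimate on the oscillatory kernel
\begin{equation*}
K(t,x) = \int_{\R^d} e^{i(x\cdot\xi\, -\, t\, p_J^{(c)}(|\xi|))}\,d\xi, \qquad p_J^{(c)}(r) = \sum_{j=1}^{J}(-1)^{j+1}\frac{\alpha(j)\,\hbar^{2j}}{m^{2j-1}c^{2j-2}}\,r^{2j},
\end{equation*}
the alternating signs coming from $\Delta^j$ having symbol $(-1)^j|\xi|^{2j}$. I would decompose dyadically via Littlewood-Paley projectors $P_N$, rescale $\xi=N\eta$ on each block, and parametrise by the dimensionless ratio $\rho = N\hbar/(mc)$ between the spectral scale $N$ and the pseudo-relativistic crossover $\beta=mc/\hbar$. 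In the non-relativistic window $\rho\lesssim 1$ the higher Taylor corrections are small smooth perturbations of the Schr\"odinger symbol $\hbar^2|\xi|^2/(2m)$, and the classical stationary-phase analysis on the essentially quadratic phase yields the dyadic dispersive bound $(t\hbar/m)^{-d/2}$ uniformly in $c$. In the ultra-relativistic window $\rho\gg 1$ the leading $r^{2J}$ term dominates, and the hypothesis $J\in 2\N-1$ enters crucially here: since $(-1)^{J+1}\alpha(J)>0$, the phase has a nondegenerate radial Hessian of size $\hbar^{2J}N^{2J-2}/(m^{2J-1}c^{2J-2})$, and stationary phase produces decay gaining a factor $\rho^{-d(J-1)}$ beyond the non-relativistic rate, leaving the low-frequency bound as the dominant one after summation in $N$.

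The main obstacle is the transition window $\rho\sim 1$, where no single monomial dominates and one must obtain a positive lower bound on the radial Hessian of $p_J^{(c)}$ uniformly in $c\geq 1$. A direct computation gives
\begin{equation*}
\partial_r^2 p_J^{(c)}(r) \;=\; \frac{\hbar^2}{m}\sum_{j=1}^J (-1)^{j+1}(2j)(2j-1)\alpha(j)\Big(\frac{\hbar r}{mc}\Big)^{2j-2},
\end{equation*}
which is the $J$th Taylor polynomial of the positive function $\frac{\hbar^2}{m}\bigl(1+(\hbar r/mc)^2\bigr)^{-3/2}$. Inserting the integral representation $(1+u)^{-3/2} = \frac{4}{\sqrt{\pi}}\int_0^\infty e^{-(1+u)s^2}s^2\,ds$ and applying the elementary Lagrange inequality $T_n(x)\geq e^x$ valid for $x\leq 0$ and $n$ even (which applies to the Taylor polynomial of $e^{-us^2}$ precisely because $J$ is odd so $J-1$ is even), I expect the clean bound
\begin{equation*}
\partial_r^2 p_J^{(c)}(r) \;\geq\; \frac{\hbar^2}{m}\Big(1+\big(\tfrac{\hbar r}{mc}\big)^2\Big)^{-3/2} \;>\; 0,
\end{equation*}
which combined with the dominance of the leading term at infinity supplies the required per-block lower bound. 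For even $J$ the Lagrange inequality reverses and the polynomial is eventually negative, which is the structural reason the theorem is restricted to $J\in 2\N-1$. With the Hessian control in hand, van der Corput's lemma applied in the radial variable, together with the standard Bessel-function asymptotics of the angular integration on $S^{d-1}$, produces the per-block dispersive estimate uniformly in $c$; Littlewood-Paley synthesis in the $L_t^qL_x^r$ norms and $TT^*$ duality then close the proofs of \eqref{Strichartz for odd} and \eqref{Inhomogeneous Strichartz for odd}.
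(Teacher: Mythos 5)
Your overall route is the same as the paper's: derive a uniform-in-$c$ $L^1\to L^\infty$ decay estimate from a lower bound on the Hessian of the radial phase, using the parity of $J$ to control the sign of the Taylor remainder, and then run Keel--Tao. Your derivation of the bound
$\partial_r^2 p_J^{(c)}(r)\ge \tfrac{\hbar^2}{m}(1+(\hbar r/mc)^2)^{-3/2}$ via the subordination
$(1+u)^{-3/2}=\tfrac{4}{\sqrt\pi}\int_0^\infty e^{-(1+u)s^2}s^2\,ds$ and the elementary inequality $T_n(x)\ge e^x$ (for $x\le0$, $n$ even) is correct and is a clean reformulation of the paper's Lagrange-remainder sign argument (Proposition 3.1); the two are logically equivalent, both hinging on $J-1$ being even. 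Your dyadic decomposition by $\rho=N\hbar/(mc)$ is also the same idea as the paper's split $\chi(\xi/c)+\sum_{2^N>c}\chi_N$.

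However, there is a genuine gap. In dimension $d\ge 2$ the relevant object is the full $d\times d$ Hessian determinant, and by Lemma~\ref{Lem:Hessian} this is
\[
\det\He(\Omega_J^{(c)})(\xi)=\omega''(|\xi|)\,\bigl\{\omega'(|\xi|)\,|\xi|^{-1}\bigr\}^{d-1},
\]
so a uniform lower bound for $r^{-1}(\omega_J^{(c)})'(r)$ is just as essential as the one for $(\omega_J^{(c)})''(r)$; your proposal bounds only the latter. Even if you switch to the Bessel-asymptotics route, the same factor reappears: after angular integration the stationary point sits at $\omega'(r_0)=\pm|x|/t$, the van der Corput contribution at $r_0$ is $(t\,\omega''(r_0))^{-1/2}$, and the amplitude carries a factor $(\omega'(r_0)/r_0)^{-(d-1)/2}$, so a uniform positive lower bound on $\omega'(r)/r$ is exactly what prevents this amplitude from blowing up and is what makes the net decay come out to $t^{-d/2}$. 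The paper proves $r^{-1}(\omega_J^{(c)})'(r)\ge \tfrac12+\text{const}\cdot(r/c)^{2J-2}$ by pairing each negative alternating term with its two positive neighbours (an AM--GM/Cauchy--Schwarz step). Your subordination trick would in fact also cover this -- $r^{-1}(\omega_J^{(c)})'(r)$ is the degree-$(J-1)$ (in $u=r^2/c^2$) truncation of $(1+u)^{-1/2}=\tfrac{2}{\sqrt\pi}\int_0^\infty e^{-(1+u)s^2}\,ds$, and the same parity argument applies -- but you need to carry this out explicitly, and then, as with $\omega''$, supplement it with the high-frequency leading-term analysis to get a bound that grows like $(r/c)^{2J-2}$ rather than one that decays. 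Without the $\omega'/r$ bound, the odd/even distinction is not fully captured: it is precisely the vanishing of $\omega'$ (and of $\omega''$) at finite radii in the even case (Lemma~\ref{Lem:even order phase function}) that makes the Hessian degenerate there.

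A smaller point: your claimed gain of $\rho^{-d(J-1)}$ per high-frequency block is more optimistic than the paper's $(1+R/c)^{(J-1)(1-d)}$, which also accounts for the growth of the $C^{m+2}$-norm of the rescaled phase entering the stationary-phase constant (see \eqref{eq:prop3.1-2}); both exponents are negative for $J\ge3$, $d\ge2$ so summability holds either way, but the sharper accounting matters if one wants to track constants.
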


\begin{remark}
\begin{enumerate}
\item The odd-admissible pairs are exactly the same as those for the non-relativistic model. Indeed, the non-relativistic case $J=1$ is included.
\item The estimates \eqref{Strichartz for odd} and \eqref{Inhomogeneous Strichartz for odd} are independent of $c\geq1$, and they hold globally in time. In this sense, it improves the previous local-in-time bound \cite[Lemma 4.3]{CLM2015}. Moreover, they can be a proper analysis tool to analyze both the non-relativistic limit ($c\to\infty $) and the large-time asymptotics ($t\to\infty$) for nonlinear problems (see Theorem \ref{Thm:Approximation} and \ref{Thm:scattering}).
\end{enumerate}
\end{remark}

\begin{remark}
\begin{enumerate}
\item 
By simple but careful analysis on the symbol of the higher-order operator $\mathcal{H}_J^{(c)}$, we show that if $J\in\N$ is odd, the Hessian of the symbol has a strict lower bound, independent of $c\geq1$ (see Proposition \ref{odd hessian}). Once it is proved, Strichartz estimates follow immediately from the standard oscillatory integral theory (see Lemma \ref{Lem:Stationary phase method}).
\item In general, the symbols of higher-order Schr\"odinger operators may have degenerate Hessian. Such examples include the bi-harmonic operator $\Delta^2$ and the even-order expansion $H_{2J_0}^{(c)}$. The associate linear Schr\"odinger equations necessarily obey only weaker dispersive estimates. 
\item The proof of Theorem \ref{Thm:Strichartz odd order HSE} heavily relies on the algebraic structure of symbols, originated from the pseudo-relativistic operator. Indeed, the signs of ordered terms in the Hessian are alternating. Nevertheless, fortunately in odd case, negative terms can be controlled by neighboring positive terms. These are not true in general.
\end{enumerate}
\end{remark}

Next, we consider higher-order equations (hLS) with even Taylor expansions. In this case, the situation becomes much more complicated, because the Hessian of the symbol can be degenerate. Precisely, there are two spheres in the frequency space where the Hessian is degenerate (see Lemma \ref{Lem: even order rank}). Our second main result provides weaker Strichartz estimates for even expansions. We now call $(q,r)$ \textit{even-admissible} if 
\begin{equation}\label{e admissible pair}
2\le q\le \infty, \quad 2\le r< \infty,\quad
\left\{\begin{aligned}
&\frac {3}{q}+\frac {1}{r}=\frac 12 &&\text{ for } d=1,\\
&\frac {2}{q}+\frac {1}{r}=\frac 12 &&\text{ for } d\ge2.
\end{aligned}\right.
\end{equation}

\begin{theorem}[Strichartz estimates for \eqref{hLS}: even case]
\label{Thm:Strichartz even hLS}
Let $J\in 2N$. Then, there exists $A>0$, independent of $c\geq 1$, such that for an even-admissible pair $(q,r)$, 
\begin{align}\label{Strichartz for even}
\big\| e^{-it \mathcal{H}_J^{(c)}}\psi_0\big\|_{L_t^q(\mathbb{R};L_x^r(\R^d))}\le 
\left\{\begin{aligned}
&A\left( \frac{m}{\hbar}\right)^\frac1q\|\psi_0\|_{\dot H^{\frac1q}(\R^d)}&&\text{ for } d=1,\\
&A\left( \frac{m}{\hbar}\right)^\frac{1}{q} \|\psi_0\|_{\dot H^{\frac{2(d-1)}{q}}(\R^d)}&&\text{ for } d\ge2.
\end{aligned}\right.
\end{align}
\end{theorem}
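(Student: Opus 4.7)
The plan is to establish a frequency-localized dispersive estimate, then combine it with a Littlewood-Paley decomposition and the Keel-Tao abstract Strichartz machinery. Writing the symbol of $\mathcal{H}_J^{(c)}$ as $q_J(|\xi|)$ with
\begin{equation*}
q_J(r) = \sum_{j=1}^{J} \frac{(-1)^{j+1}\alpha(j)\hbar^{2j}}{m^{2j-1}c^{2j-2}} r^{2j},
\end{equation*}
the Hessian at $|\xi|=r$ has eigenvalue $q_J''(r)$ in the radial direction and $q_J'(r)/r$ (multiplicity $d-1$) in the tangential directions. Because the leading coefficient of $q_J$ is negative for even $J$, both $q_J'$ and $q_J''$ must vanish at some positive radii $r_1^\ast,r_2^\ast$; these are precisely the two degenerate spheres of Lemma \ref{Lem: even order rank}. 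My first step is to verify that these are simple zeros with quantitative lower bounds on $|q_J''(r_1^\ast)|$ and $|q_J'''(r_2^\ast)|$. The rescaling $\xi=c\eta$ converts the symbol into $c^2\tilde q_J(|\eta|)$ with $\tilde q_J$ a $c$-independent polynomial, so all such algebraic estimates can be read off from a single universal radial function, securing the uniformity in $c\geq 1$.

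With the radial analysis in place, I would perform a Littlewood-Paley decomposition $\psi_0=\sum_N P_N\psi_0$ and prove, for each dyadic $N$, the per-frequency dispersive bound
\begin{equation*}
\big\| e^{-it\mathcal{H}_J^{(c)}} P_N f \big\|_{L^\infty_x}
\lesssim
\begin{cases}
|t|^{-1/3}\,N^{1/3}\,\|P_N f\|_{L^1_x}, & d=1,\\[0.3em]
|t|^{-1/2}\,N^{d-1}\,\|P_N f\|_{L^1_x}, & d\geq 2,
\end{cases}
\end{equation*}
with constant independent of $c\geq 1$. In $d=1$ the kernel is a one-dimensional oscillatory integral on $|\xi|\sim N$ whose phase has $q_J''$ vanishing near $r_2^\ast$; a Van der Corput estimate of order three (using the non-vanishing $q_J'''$) yields the $|t|^{-1/3}$ rate, with the $N^{1/3}$ arising from a scaling that balances the amplitude against the growth of the cubic phase derivative. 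In $d\geq 2$ I would pass to polar coordinates and bound the angular integral by the Bessel asymptotic $\int_{S^{d-1}}e^{irx\cdot\omega}\,d\omega\lesssim (r|x|)^{-(d-1)/2}$ in the regime $|x|\gtrsim 1/N$; a radial Van der Corput using $q_J''(r_1^\ast)\neq 0$ then gives $|t|^{-1/2}$, and the worst contribution comes from frequencies near $r_1^\ast$, where the $d-1$ tangential eigenvalues of the Hessian vanish and manifest themselves as the $N^{d-1}$ amplification. The complementary region $|x|\lesssim 1/N$ is treated by a direct $L^\infty$ bound on the amplitude.

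Once the dispersive estimate is in hand, the abstract Keel-Tao theorem applied to the semigroup $U(t)=e^{-it\mathcal{H}_J^{(c)}}P_N$ of rate $|t|^{-\sigma_d}$ (with $\sigma_1=1/3$, $\sigma_d=1/2$ for $d\geq 2$) produces, for every even-admissible pair $(q,r)$,
\begin{equation*}
\big\| e^{-it\mathcal{H}_J^{(c)}}P_N f \big\|_{L^q_t L^r_x}
\lesssim \Big(\frac{m}{\hbar}\Big)^{1/q} N^{\beta_d(1/2-1/r)}\,\|P_N f\|_{L^2_x},
\end{equation*}
where $\beta_1=1/3$ and $\beta_d=d-1$ for $d\geq 2$. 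The admissibility relations \eqref{e admissible pair} give exactly $\beta_d(1/2-1/r)=1/q$ for $d=1$ and $\beta_d(1/2-1/r)=2(d-1)/q$ for $d\geq 2$, so a square-summation over dyadic $N$ via the Littlewood-Paley square function immediately upgrades the piecewise bound to the claimed $\dot H^s$-level estimate.

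The main obstacle is the dispersive analysis near the rank-one sphere $r_1^\ast$ in $d\geq 2$, where the group velocity vanishes and the naive $d$-dimensional stationary phase collapses to a single non-degenerate direction. Making this quantitative requires a careful combination of angular Bessel asymptotics, a radial Van der Corput, and a separate treatment of the near-axis region $|x|\lesssim 1/N$, in order to extract both the slower $|t|^{-1/2}$ dispersive rate and the sharp amplification $N^{d-1}$. The parallel obstacle in $d=1$ is to justify the precise $N^{1/3}$ factor via a rescaling that matches the scaling of the cubic phase derivative. A secondary but ubiquitous issue is uniformity in $c\geq 1$: the radii $r_i^\ast(c)$ and the non-vanishing values of the relevant derivatives all depend on $c$, but the rescaling $\xi=c\eta$ reduces the analysis to a single $c$-independent symbol, at the cost of tracking the $(m/\hbar)^{1/q}$ prefactor through the change of variables.
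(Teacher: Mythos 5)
Your overall architecture matches the paper's: frequency-localized dispersive estimates, Keel--Tao $TT^*$ to get per-shell Strichartz bounds, and Littlewood--Paley square-summation to upgrade to $\dot H^s$. Your exponent bookkeeping is also correct: with $\beta_d=d-1$ and $\sigma_d=\tfrac12$ for $d\ge2$ (resp.\ $\beta_1=\sigma_1=\tfrac13$), the even-admissibility relations turn $\beta_d(\tfrac12-\tfrac1r)$ into $\tfrac{2(d-1)}{q}$ (resp.\ $\tfrac1q$), and the $c$-rescaling cleanly explains the uniformity.

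However, your proof of the per-shell dispersive estimate in $d\ge2$ has a gap at the \emph{second} degenerate sphere. You pass to polar coordinates, use the Bessel asymptotic for the angular integral, and then apply a \emph{radial} Van der Corput ``using $q_J''(r_1^\ast)\neq 0$''. That $q_J''$ is nonzero at $r_1^\ast$ does not make it nonzero on the whole shell $\{|\xi|\sim c\}$: by Lemma \ref{Lem:even order phase function} there is a second radius $r_2\in(c/2,c)$ where $q_J''$ vanishes, and on the annulus containing $r_2$ your radial second-derivative Van der Corput simply does not apply. Falling back to a third-derivative Van der Corput there gives only $(t/c)^{-1/3}$, which, combined with the Bessel factor $(r|x|)^{-(d-1)/2}$, fails to recover $c^{d-1}|t|^{-1/2}$ when $|x|$ is small (e.g., $|x|\lesssim 1/N$) — and it is precisely the $\sup_v$ over all $v=x/t$ that one must control in the kernel bound. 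The paper avoids this by abandoning polar coordinates near $r_2$: it partitions into angular sectors $\Theta_j$, and near $r_2$ it does stationary phase in the $(d-1)$ transverse Cartesian variables $\check\xi_j$ (Lemma \ref{Lem:Hessian lower bound even}, estimate \eqref{hessian r2}), which yields $(2^{2N}t)^{-(d-1)/2}$ uniformly in $v$ because the $(d-1)\times(d-1)$ sub-Hessian of the phase is independent of $v$; then it integrates the remaining radial variable over a $c$-length interval. Your Bessel approach packages the tangential curvature into a factor $(r|x|)^{-(d-1)/2}$, which is useless exactly when the radial phase is degenerate and $|x|$ is small, so the ``$\mathcal K_2$'' region of the paper is a genuinely missing piece in your argument. (The $d=1$ case and the near-$r_1$ piece are fine, and your global claim $N^{d-1}|t|^{-1/2}$ for all $N$ is a true but non-sharp consequence of the $t^{-d/2}$ bound away from the degenerate shell — you just cannot get it everywhere from the polar/radial Van der Corput route alone.)
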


\begin{remark}
\begin{enumerate}
\item In even case, we have weaker Strichartz estimates. Note that in odd case, the Strichartz estimates \eqref{Strichartz for odd} and the Sobolev inequality yield the estimates of the form \eqref{Strichartz for even}.
\item As mentioned above, in even case, the Hessian of the symbol has two degenerate spheres. It causes additional technical difficulties. To handle this, we need to employ the Littlewood-Paley decomposition.
\item We do not claim optimality of the estimates \eqref{Strichartz for even}. The additional derivative on the right hand side of \eqref{Strichartz for even} could be reduced, for instance, by applying a more delicate theory of the resolution of singularity \cite{Arnold1985}.
\end{enumerate}
\end{remark}

\subsection{Applications to nonlinear problems}

Strichartz estimates are one of the fundamental tools for linear and nonlinear dispersive PDEs (see \cite{Cazenavebook}). In this article, we discuss two nonlinear problems which can be solved by taking the full advantages of uniform global-in-time Strichartz estimates (Theorem \ref{Thm:Strichartz odd order HSE});  one is higher-order approximation estimate for the pseudo-relativistic Hartree(-Fock) equation via its higher-order expansions. The other is the small data scattering for the higher-order nonlinear Schr\"odinger equations.

\subsubsection{Higher-order approximation}
Let $J\in 2\N-1$ be an odd number. 
Then, as formal finite expansions of the pseudo-relativistic Hartree(-Fock)  model mentioned earlier, we introduce the corresponding higher-order Hartree-Fock equation
\begin{equation}\tag{hHF}\label{hNLS}
    i\hbar\partial_t \phi_k =\mathcal{H}_J^{(c)}\phi_k+H\phi_k - F_k(\phi_k),\quad k=1,2,..., N,
\end{equation}
and the higher-order Hartree equation
\begin{align}\tag{hH}\label{hS}
    i\hbar\partial_t \phi_k =\mathcal{H}_J^{(c)}\phi_k+H\phi_k,\quad k=1,2,..., N.
\end{align}
By Strichartz estimates (Theorem \ref{Thm:Strichartz odd order HSE}) and the mass conservation law, both (hHF) and (hH) are globally well-posed in $L^2(\mathbb{R}^3;\mathbb{C}^N)$ (see Proposition \ref{Pro:GlobalL2}). 

The following theorem provides precise global-in-time error bounds for the higher-order approximations of the pseudo-relativistic models.

\begin{theorem}[Higher-order approximation]\label{Thm:Approximation}
Let $J\in 2\N-1$ and $c\geq1$. Suppose that $\Psi_0=\{\psi_{k,0}\}_{k=1}^N\in  H^\frac12(\R^3;\mathbb{C}^N)$. If $\kappa<0$, we further assume that $\|\Psi_0\|_{H^\frac12(\mathbb{R}^3;\mathbb{C}^N)}$ is sufficiently small. Let $\Psi^{(c)}(t)=\{\psi_k^{(c)}(t)\}_{k=1}^N \in C(\R;H^\frac12(\R^3;\mathbb{C}^N))$ be the global solution to \eqref{pNLHF} (resp., \eqref{pNLH}) with initial data $\Psi_0$, and let $\Phi^{(c)}(t)=\{\phi_k^{(c)}(t)\}_{k=1}^N \in C(\R;H^\frac12(\R^3;\mathbb{C}^N))$ be the global solution to \eqref{hNLS} (resp., \eqref{hS}) with the same initial data. Then, there exist $A,B>0$, depending on $\|\Psi_0\|_{H^\frac12(\mathbb{R}^3;\mathbb{C}^N)}$ but  independent of $c\geq1$, such that 
\begin{align}\label{approximation}
\| \Phi^{(c)}(t)- \Psi^{(c)}(t)\|_{ L^2(\R^3;\mathbb{C}^N) }\le Ac^{-\frac{J}{2(J+1)}}e^{Bt}.
\end{align}
\end{theorem}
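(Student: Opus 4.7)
The plan is to express the difference $w_k:=\phi_k^{(c)}-\psi_k^{(c)}$ via Duhamel's formula written for each flow relative to its \emph{own} linear propagator; after subtraction, one obtains the clean splitting
\begin{align*}
w_k(t) = {} & \bigl(e^{-it\mathcal{H}_J^{(c)}/\hbar}-e^{-it\mathcal{H}^{(c)}/\hbar}\bigr)\psi_{k,0} \\
& -\tfrac{i}{\hbar}\int_0^t\bigl(e^{-i(t-s)\mathcal{H}_J^{(c)}/\hbar}-e^{-i(t-s)\mathcal{H}^{(c)}/\hbar}\bigr)\mathcal{N}_k(\Psi^{(c)}(s))\,ds \\
& -\tfrac{i}{\hbar}\int_0^t e^{-i(t-s)\mathcal{H}_J^{(c)}/\hbar}\bigl[\mathcal{N}_k(\Phi^{(c)}(s))-\mathcal{N}_k(\Psi^{(c)}(s))\bigr]\,ds,
\end{align*}
where $\mathcal{N}_k(\Psi):=H\psi_k-F_k(\psi_k)$ for \eqref{pNLHF} (and $H\psi_k$ for \eqref{pNLH}). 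Mass and energy conservation, together with the smallness assumption in the attractive case $\kappa<0$, provide uniform-in-$c$ $H^{1/2}$-bounds on both $\Psi^{(c)}$ and $\Phi^{(c)}$. The first two terms carry the ``linear'' propagator mismatch, while the third encodes the nonlinear self-coupling of $w$.

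The core analytic ingredient I would establish is the interpolated linear approximation
\begin{equation*}
\bigl\|\bigl(e^{-it\mathcal{H}_J^{(c)}/\hbar}-e^{-it\mathcal{H}^{(c)}/\hbar}\bigr)f\bigr\|_{L^2(\R^3)}\lesssim t^{\frac{1}{4J+4}}\,c^{-\frac{J}{2(J+1)}}\,\|f\|_{H^{1/2}(\R^3)}.
\end{equation*}
The argument is a frequency cut-off interpolation: decompose $f=P_{\le N}f+P_{>N}f$. On $P_{\le N}f$, apply the Carles--Lucha--Moulay bound \eqref{CLM linear approximation} together with the Bernstein-type estimate $\|P_{\le N}f\|_{H^{2J+2}}\lesssim N^{2J+3/2}\|f\|_{H^{1/2}}$, producing an error of order $t\,N^{2J+3/2}/c^{2J}$. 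On $P_{>N}f$, each propagator is an $L^2$-isometry and $\|P_{>N}f\|_{L^2}\lesssim N^{-1/2}\|f\|_{H^{1/2}}$. Balancing by $N^{2J+2}\sim c^{2J}/t$, i.e.\ $N\sim (c^{2J}/t)^{1/(2J+2)}$, produces the stated rate.

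Applying this bound to the first Duhamel term with $f=\psi_{k,0}$ is immediate. For the second, one applies it pointwise in $s$ to $f=\mathcal{N}_k(\Psi^{(c)}(s))$ after establishing the Hartree/Fock product bound $\|\mathcal{N}_k(\Psi)\|_{H^{1/2}(\R^3)}\lesssim P(\|\Psi\|_{H^{1/2}(\R^3)})$, a standard low-regularity HLS/Sobolev product estimate from the $H^{1/2}$-theory of Fröhlich--Lenzmann \cite{FL-2007}. The third Duhamel term is where Theorem \ref{Thm:Strichartz odd order HSE} enters crucially (this is the reason the approximation theorem is restricted to odd $J$): the uniform global-in-time Strichartz estimates \eqref{Strichartz for odd}--\eqref{Inhomogeneous Strichartz for odd}, combined with the Lipschitz property of $\mathcal{N}_k$ on any fixed $H^{1/2}$-ball, bound this term by $C\int_0^t\|w(s)\|_{L^2}\,ds$ with $C$ independent of $c$.

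Collecting the three estimates yields an integral inequality of the form
$\|w(t)\|_{L^2}\le A_0\,c^{-J/(2(J+1))}(1+t)^\alpha+B_0\int_0^t\|w(s)\|_{L^2}\,ds,$
and Gr\"onwall's inequality absorbs the polynomial factor in $t$ into $e^{Bt}$, delivering \eqref{approximation}. The hardest step will be verifying that the Hartree/Fock product estimate closes \emph{linearly} in $w$ at the low regularity $H^{1/2}(\R^3)$ with constants uniform in $c\ge1$, and that the Strichartz closure of the third Duhamel term produces a genuinely linear (rather than super-linear) Gr\"onwall hypothesis; this is precisely what the global-in-time, $c$-uniform Strichartz estimates of Theorem \ref{Thm:Strichartz odd order HSE} are designed to deliver.
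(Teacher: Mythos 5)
Your decomposition and the linear approximation lemma are essentially the same as the paper's (the paper's Lemma~\ref{lem:5.4} uses a fixed frequency cutoff $|\xi|\le c^{J/(J+1)}$ and Taylor's remainder formula, yielding $c^{-J/(2(J+1))}\langle t\rangle$, whereas your $t$-dependent balance gives $t^{1/(4J+4)}c^{-J/(2(J+1))}$; both are absorbed by Gr\"onwall, so that difference is cosmetic). Your treatment of the first two Duhamel terms matches the paper's.

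The gap is in the claim that ``mass and energy conservation\ldots provide uniform-in-$c$ $H^{1/2}$-bounds on both $\Psi^{(c)}$ \emph{and} $\Phi^{(c)}$.'' This is false for the higher-order flow $\Phi^{(c)}$: the conserved kinetic part of the higher-order energy $\mathcal{E}$ is $\int\omega_J^{(c)}(|\xi|)|\widehat{\phi}_{k}|^2d\xi\gtrsim\|\phi_k\|_{\dot H^1}^2$, so energy conservation controls $H^1$, not $H^{1/2}$, and with only $H^{1/2}$ initial data this energy is not even finite. Consequently the ``Lipschitz property of $\mathcal{N}_k$ on a fixed $H^{1/2}$-ball'' cannot be invoked, because your difference $\mathcal{N}_k(\Phi^{(c)})-\mathcal{N}_k(\Psi^{(c)})$ contains terms with $\phi_\ell^{(c)}$ as an outer factor, e.g.\ $\bigl(|x|^{-1}\!*(\overline{\phi_\ell^{(c)}}w_k)\bigr)\phi_\ell^{(c)}$, whose $L^2$ estimate requires control of $\|\phi_\ell^{(c)}\|_{L^3}$ via \eqref{nonlinear term in L2}. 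The paper supplies this through Proposition~\ref{Pro:GlobalL2}: a uniform-in-$c$ global $L^2$-well-posedness theory built on the odd-$J$ Strichartz estimates \eqref{Strichartz for odd}, which yields $\|\Phi^{(c)}\|_{L^4_t([0,T];L^3_x)}\lesssim (1+T)^{1/4}$. This time-dependent but polynomially growing $L^3$ Strichartz bound is what makes the Gr\"onwall coefficient finite and still leads to $e^{Bt}$. In short, you need to replace the (unavailable) uniform $H^{1/2}$ bound for $\Phi^{(c)}$ with the $L^2$/$L^3$ Strichartz bounds for the higher-order solution; without that step the Gr\"onwall inequality does not close.

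A secondary remark: the paper estimates the third Duhamel contribution by $L^2$-unitarity of $e^{-i(t-s)\mathcal{H}_J^{(c)}}$ rather than by the inhomogeneous Strichartz estimate, then applies \eqref{nonlinear term in L2} with $\|\Phi^{(c)}(s)\|_{L^3}$ and $\|\Psi^{(c)}(s)\|_{H^{1/2}}$. Your instinct that ``Strichartz enters crucially'' is right, but its role in this particular theorem is upstream (in proving Proposition~\ref{Pro:GlobalL2}), not in the final Duhamel closure.
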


\begin{remark}
\begin{enumerate}
\item The limit behavior of stationary states to the pseudo-relativistic equations have been studied. In \cite{Lenzmann2009, Choi2016, CHS2018-0}, the authors established the non-relativistic limits of stationary states, which corresponds to the $J=1$ case. Then, the higher-order approximation estimates are proved \cite{CHS2018}.
\item   The convergence rate in \eqref{approximation} is getting better as $J$ increases, but we do not claim its optimality. 
\item The proof is quite standard. We write the solutions in Duhamel formulae and measure the difference in $L^2$. The convergence rate comes from the Taylor expansion of symbol and the regularity gap from $H^\frac12$ where the initial data is given. 
\item The smallness condition on initial data when $\lambda<0$ is imposed just for the global well-posedness of \eqref{pNLHF}. Even $J$'s are not included, but the same result can be proved once the loss of regularity in Strichartz estimates is reduced as for odd $J$'s. The argument can be easily applied to higher dimensional case $d\ge3$. 
\end{enumerate}
\end{remark}

\subsubsection{Small data scattering}
Let $J\in 2\N-1$. We now consider the higher-order nonlinear Schr\"odinger equation, 
\begin{equation}\label{gNLS}\tag{hNLS}
  i\partial_t\psi= \mathcal{H}_J^{(c)}\psi
  +\kappa|\psi|^{\nu-1}\psi,
  \end{equation}
  where $\psi=\psi(t,x):\mathbb{R}\times\mathbb{R}^d\to\mathbb{C}$, $\kappa\in\R$ and $\nu>1$. This equation could be considered as the $J$ th-order approximation to the pseudo-relativistic NLS
 $ i\partial_t \psi=\mathcal{H}^{(c)}\psi+\kappa|\psi|^{\nu-1}\psi$. Using the Strichartz estimates (Theorem \ref{Thm:Strichartz odd order HSE}), we prove small data scattering.
 
\begin{theorem}[Small data scattering in $H^1(\R^d)$ for \eqref{gNLS}]\label{Thm:scattering}
  Let $J\in 2\N-1$. Suppose that $\nu$ satisfies 
  \begin{align}\label{H1 LWP}
  \begin{cases}
   1+\frac{4}{d}<\nu<\frac{d+2}{d-2}, & \text{ if } d\ge 3 \\
   1+\frac{4}{d}<\nu<\infty, & \text{ if } d=1,2.
  \end{cases}
  \end{align}
  Then, there exists $\ep_0>0$ (independent of $c\geq 1$) such that if $\|\psi_{c,0}\|_{H^1(\mathbf{R}^d)}\le \ep_0$, then there exists a unique global solution $\psi_c(t)\in C(\R;H^1(\R^d))$ to \eqref{gNLS} with initial data $\psi_{c,0}$, and it moreover scatters in $H^1(\R^d)$, i.e., there exist $\psi_{J,\pm}^{(c)}\in H^1(\R^d)$ such that
  \begin{align*}
  \lim_{t\rightarrow \pm\infty} \| \psi_c(t) - e^{-it\mathcal{H}_J^{(c)}}\psi_{J,\pm}^{(c)}\|_{H^1(\mathbb{R}^d)}=0.
  \end{align*}
  \end{theorem}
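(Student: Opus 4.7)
The plan is to run a standard Cazenave--Weissler small-data contraction argument in a Strichartz space, leveraging the fact that Theorem \ref{Thm:Strichartz odd order HSE} gives the \emph{same} admissible set as for the non-relativistic Schr\"odinger equation, with constants uniform in $c\geq1$. Because $\mathcal{H}_J^{(c)}$ is a Fourier multiplier, it commutes with $\nabla$, so Strichartz bounds on $\psi$ and $\nabla\psi$ are available under identical hypotheses, which will allow closure at the $H^1$ regularity.

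First I would rewrite \eqref{gNLS} in Duhamel form
\begin{equation*}
\psi_c(t)=e^{-it\mathcal{H}_J^{(c)}}\psi_{c,0}-i\kappa\int_0^t e^{-i(t-s)\mathcal{H}_J^{(c)}}\bigl(|\psi_c|^{\nu-1}\psi_c\bigr)(s)\,ds,
\end{equation*}
and select an odd-admissible pair $(q,r)$ tuned to the nonlinearity, namely $r=\nu+1$ with $q$ defined by $\frac{2}{q}+\frac{d}{r}=\frac{d}{2}$; the hypothesis $1+\tfrac{4}{d}<\nu<\tfrac{d+2}{d-2}$ guarantees $(q,r)$ is admissible in the sense of \eqref{o admissible pair} and that the Sobolev embedding $W^{1,r}\hookrightarrow L^{r(\nu-1)\cdot\text{something}}$ needed below holds. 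Define the space
\begin{equation*}
X_T=\bigl\{u:\|u\|_{L_t^\infty([-T,T];H^1_x)}+\|u\|_{L_t^q([-T,T];W^{1,r}_x)}<\infty\bigr\},
\end{equation*}
and consider the map $\Phi[u]$ given by the right-hand side of the Duhamel formula with $\psi_c$ replaced by $u$.

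The core estimate is the nonlinear bound, obtained by a fractional Leibniz/chain rule for $|u|^{\nu-1}u$ (in its standard $C^1$-regularized form when $\nu$ is not an odd integer) together with H\"older in time. Writing $\tilde q,\tilde r$ for the same admissible pair used in $X_T$, one controls
\begin{equation*}
\bigl\||u|^{\nu-1}u\bigr\|_{L_t^{\tilde q'}W^{1,\tilde r'}_x}\lesssim T^{\theta}\|u\|_{L_t^\infty H^1_x}^{\nu-1}\|u\|_{L_t^q W^{1,r}_x}
\end{equation*}
for a suitable $\theta\geq 0$; the point is that for $\nu>1+4/d$ the exponent $\theta$ can be taken to be $0$ after choosing $(q,r)$ correctly, so the bound is global in time. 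Combining this with Theorem \ref{Thm:Strichartz odd order HSE} applied to both $u$ and $\nabla u$ yields, with a constant $A$ independent of $c$,
\begin{equation*}
\|\Phi[u]\|_{X_\infty}\leq A\|\psi_{c,0}\|_{H^1}+A\|u\|_{X_\infty}^\nu,\qquad \|\Phi[u]-\Phi[v]\|_{X_\infty}\leq A(\|u\|_{X_\infty}^{\nu-1}+\|v\|_{X_\infty}^{\nu-1})\|u-v\|_{X_\infty}.
\end{equation*}
Thus for $\|\psi_{c,0}\|_{H^1}\leq\varepsilon_0$ with $\varepsilon_0$ small (depending on $A$, hence independent of $c$), $\Phi$ is a contraction on a ball in $X_\infty$ and produces the unique global solution. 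Scattering follows in the usual way: the bound $\|\psi_c\|_{L_t^qW^{1,r}_x(\mathbb{R})}<\infty$ implies that
\begin{equation*}
\psi_{J,\pm}^{(c)}:=\psi_{c,0}-i\kappa\int_0^{\pm\infty}e^{is\mathcal{H}_J^{(c)}}\bigl(|\psi_c|^{\nu-1}\psi_c\bigr)(s)\,ds
\end{equation*}
is a well-defined element of $H^1$, and the Cauchy-in-$H^1$ tail estimate $\|\psi_c(t)-e^{-it\mathcal{H}_J^{(c)}}\psi_{J,\pm}^{(c)}\|_{H^1}\to 0$ as $t\to\pm\infty$ follows from the same nonlinear bound applied to the interval $[t,\pm\infty)$.

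The main obstacle, and the reason the odd case is easy to write down, is verifying the nonlinear Strichartz estimate with the correct temporal exponents so that the bound is \emph{time-global} and \emph{$c$-uniform}; the former requires a careful matching of exponents exploiting $\nu>1+4/d$ (to avoid any $T^\theta$ factor in the closing inequality), while the latter is automatic because every Strichartz invocation produces the same constant $A(m/\hbar)^{1/q}$ from Theorem \ref{Thm:Strichartz odd order HSE}. In $d\geq 3$ one additionally needs $\nu<(d+2)/(d-2)$ so that $\nu+1$ lies below the Sobolev conjugate of $H^1$, ensuring the fractional chain rule applies with a Sobolev embedding at the final step. No $c$-dependent high-frequency loss appears, so the same $\varepsilon_0$ works for all $c\geq1$.
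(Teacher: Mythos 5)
Your overall strategy matches the paper's: Duhamel form with $r=\nu+1$ and an odd-admissible pair $(q,r)$, the space $X^1 := L_t^\infty H^1 \cap L_t^q W^{1,r}$ with $c$-uniform Strichartz constants from Theorem \ref{Thm:Strichartz odd order HSE}, the lower bound $\nu > 1 + 4/d$ to kill the temporal factor (via the auxiliary exponent $\tilde q$ satisfying $1/\tilde q = 1/q'-1/q$ and $q < \tilde q(\nu-1)$), and scattering from the tail of the Duhamel integral. But there is a genuine gap in the contraction step. You assert
\begin{equation*}
\|\Phi[u]-\Phi[v]\|_{X_\infty}\leq A\bigl(\|u\|_{X_\infty}^{\nu-1}+\|v\|_{X_\infty}^{\nu-1}\bigr)\|u-v\|_{X_\infty},
\end{equation*}
which puts the full $W^{1,r}$-Strichartz norm on both sides. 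Closing the contraction at this level requires bounding $\nabla\bigl(|u|^{\nu-1}u - |v|^{\nu-1}v\bigr)$, which contains the term $\bigl(|u|^{\nu-1}-|v|^{\nu-1}\bigr)\nabla v$. When $\nu<2$ — which the hypothesis \eqref{H1 LWP} permits: for $d=5$ one has $\nu\in(9/5,7/3)$, and for $d=6$ the entire admitted range $(5/3,2)$ lies below $2$ — the map $z\mapsto|z|^{\nu-1}$ is only $(\nu-1)$-H\"older, not Lipschitz, so this term produces $|u-v|^{\nu-1}$ rather than $|u-v|$ and the claimed inequality fails. The paper handles this by the standard Cazenave device: the ball is defined by the $X^1$-norm, but the contraction is carried out with respect to the weaker, derivative-free metric
\begin{equation*}
d(u,v)=\|u-v\|_{L_t^\infty L^2}+\|u-v\|_{L_t^q L^r},
\end{equation*}
using completeness of $(L^\infty H^1\cap L^q W^{1,r},d)$. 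The difference estimate then needs only the pointwise bound $\bigl||\psi_1|^{\nu-1}\psi_1-|\psi_2|^{\nu-1}\psi_2\bigr|\lesssim(|\psi_1|^{\nu-1}+|\psi_2|^{\nu-1})|\psi_1-\psi_2|$, valid for all $\nu>1$. Your parenthetical ``$C^1$-regularized form when $\nu$ is not an odd integer'' does not address this — it is not regularity of the phase that obstructs the estimate but the failure of $C^{1,1}$ regularity of $z\mapsto|z|^{\nu-1}z$ for $\nu<2$. With the weaker-metric fix supplied, the remainder of your argument coincides with the paper's.
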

  
\begin{remark}
\begin{enumerate}
\item The proof of \eqref{Thm:scattering} is identical to that in the non-relativistic case (see \cite{Cazenavebook}), because Strichartz estimates \eqref{Strichartz for odd} are exactly the same. 
\item An interesting question would be to show how accurately the scattering state $\psi_{J,\pm}^{(c)}$ for the higher-order model approximates the relativistic scattering state. The non-relativistic limit $(J=1)$ of scattering states is proved for the nonlinear Klein-Gordon equation \cite{Nakanishi2001}. 
\end{enumerate}
\end{remark}

\subsection{Organization}
In Section~2, we provide basic lemmas for oscillatory integrals, which will be repeatedly used to estimate the kernel of linear flows.
In Section~3, we prove the global Strichartz estimates which are uniform in $c$ when $J$ is odd.
In Section~4, the Strichartz estimates with derivative loss are established when $J$ is even, which requires additional technical issues.
In Section~5, we arrange well-posedness results for \eqref{pNLHF} and prove the global well-posedness of \eqref{hNLS} as applications of uniform Strichartz estimates. In addition, we show that the nonlinear solutions are uniformly bounded with respect to $c$. Finally, we prove the convergence of solutions to \eqref{hNLS} towards solutions to \eqref{pNLHF} as $c$ goes to infinity.
In Section~6, as an application of global Strichartz estimates, we consider the power type nonlinearity and prove the small data scattering result.

\subsection*{Acknowledgement}
This research of the first author was supported by the Basic Science Research Program through the National Research Foundation of Korea (NRF) funded by the Ministry of Science and ICT (NRF-2020R1A2C4002615).
C. K. was supported by the National Research Foundation of Korea (NRF) grant funded by the Korea government (MSIT) (No. 2020R1F1A1A0106876811). C. Yang was supported by the National Research Foundation of Korea(NRF) grant funded by the Korea government(MSIT) (No. 2021R1C1C1005700).

The authors are grateful to the anonymous referee for careful reading of the paper and valuable comments.

\section{Preliminaries}

\subsection{Notations}
Let $\xi=(\xi_1,\cdots,\xi_d)\in\R^d$.
\begin{itemize}
  \item $\{ \mathbf{e}_j \}_{j=1}^d$ denotes the standard basis of $\R^d$.
  \item We identify a vector in $\R^d$ as a $d \times 1$ column matrix via the standard isomorphism 
  \begin{align*}
    (\xi_1,\cdots,\xi_d) \ \longleftrightarrow \
    \begin{bmatrix}
      \xi_1 \\ 
      \vdots \\ 
      \xi_d
    \end{bmatrix}
  \end{align*}
  \item 
  For $\eta \in \R^d$, we denote the inner product by
  $\xi\cdot\eta=\sum_{i=1}^d\xi_i\eta_i$.
  \item Fix $j \in\{1,\cdots,d\}$. We denote
  $\check{\xi}_j=(\xi_1,\cdots,\xi_{j-1},\xi_{j+1},\cdots,\xi_d)\in\R^{d-1}$.
  \item For an $n$-tuple $\alpha=(\alpha_1,\cdots,\alpha_d)$ of nonnegative integers and a function $f$ on $\R^d$, we define 
  \begin{align*}
     \partial^\alpha f := \partial_1^{\alpha_1}\partial_2^{\alpha_2} \cdots \partial_d^{\alpha_d} f
  \end{align*}
\end{itemize}

Let $\Omega:\R^d\rightarrow\R$ be a $C^2$ function.
We denote the Hessian matrix of $\Omega$, $d\times d$ matrix, by $\textbf{H}(\Omega)$ whose $(i,j)$ component is given by 
$$[\textbf{H}(\Omega)]_{ij} = \partial_{\mathbf{e}_i}\partial_{\mathbf{e}_j}\Omega \text{ for } i,j=1,2,\cdots,d.$$
We say that $\Omega$ is \textit{degenerate} at $x\in \R^d$ if det$\textbf{H}(\Omega)(x)=0$.

We generalize the notion of Hessian.
Let $E$ be a set of $k$ orthonormal vectors in $\R^d$, say, 
$E=\{ \mathbf{u}_{1}, \cdots, \mathbf{u}_{k} \}$.
We define \textit{a Hessian of $\Omega$ with respect to $E$} by $k\times k$ matrix $\textbf{H}_{\{ \mathbf{u}_{1}, \cdots, \mathbf{u}_{k} \}}(\Omega)$ whose $(m,n)$ component is given by 
$$[\textbf{H}_{\{ \mathbf{u}_{1}, \cdots, \mathbf{u}_{k} \}}(\Omega)]_{mn} = \partial_{\mathbf{u}_{m}}\partial_{\mathbf{u}_{n}}\Omega \text{ for } m,n=1,2,\cdots,k,$$ where $\partial_{\mathbf{u}_{m}}$ denotes the directional derivative along $\mathbf{u}_{m}$.


The rank of a matrix $A$ is the dimension of the vector space generated by its columns. If $A$ has a rank $k$, we denote rank$A=k$.

Let $\chi\in C_c^\infty(\mathbb{R})$ be a radial non increasing function such that
$\chi(x)=1$ for $|x|\le 1$ and $\chi(x)=0$ for $|x|\ge2$, and
$$\sum_{N\in \Z}\chi_N(|x|) \equiv 1, \text{ for } x\in \R^d\setminus\{0\},$$
where $\chi_N=\chi(\frac{\cdot}{2^{N+1}})-\chi(\frac{\cdot}{2^N})$ for $N\in \Z$.
We define the projection operator $P_N$ by the fourier multiplier such that
\begin{align}\label{projection operator}
\widehat{P_Nf}(\xi)=\chi_N(|\xi|)\widehat{f}(\xi).
\end{align}
Then, $f= \sum_{N\in \Z}P_N f$.

Let us define $\Theta_j=\{ \xi\in\R^d : |\xi_i|\ge \frac{1}{\sqrt{2d}}|\xi| \}$ with $j=1,2,\cdots,d$. Then, $\R^d\setminus\{0\}= \cup_{j=1}^d \Theta_i$ and there is a partition of unity $\{\theta^j\}$ subordinate to the covering $\Theta_j$ (the $\theta^j$ can be defined on the sphere and extended such that they are homogeneous of order zero) satisfying that 
\begin{align} \label{partition of unity}
  \sum_{j=1}^d \theta^j(\xi) = 1 \text{ and }
  |\xi_j|\ge \frac{1}{\sqrt{2d}}|\xi|  \text{ on the support of } \theta^j .
 \end{align}

\subsection{Stationary phase method.}

By taking the Fourier transform, the linear solution to \eqref{hLS} is given by
\begin{align}\begin{aligned} \label{oscillatory integral}
  e^{-it\mathcal{H}_J^{(c)}}\psi_0(x) 
  &= \frac{1}{(2\pi)^d}\int_{\R^d}  
  e^{ix\cdot\xi-it\Omega_J^{(c)}(\xi)} \widehat{\psi_0}(\xi) d\xi,
\end{aligned}\end{align}
where the dispersion relation is given by 
$$\Omega_J^{(c)}(\xi):=\sum_{j=1}^J\frac{(-1)^{j+1}(2j-2)!\hbar^{2j-1}}{(j-1)!j!(2m)^{2j-1}c^{2j-2}}|\xi|^{2j}.$$
We observe that $\Omega_J^{(c)}$ is a radial function, so we have 
$$\Omega_J^{(c)}(\xi)=\omega_J^{(c)}(|\xi|) \text{ with } \omega_J^{(c)}:[0,\infty)\rightarrow \R,$$
explicitly, 
\begin{align}\label{def omega}
  \omega_J^{(c)}(r)=\sum_{j=1}^J\frac{(-1)^{j+1}(2j-2)!\hbar^{2j-1}}{(j-1)!j!(2m)^{2j-1}c^{2j-2}}r^{2j}.
\end{align}
The linear solution can be written as kernel form 
\begin{align*}
  e^{-it\mathcal{H}_J^{(c)}}\psi_0(x) 
  = \frac{1}{(2\pi)^d}\int_{\R^d}  \left( \int_{\R^d}
  e^{i(x-y)\cdot\xi-it\Omega_J^{(c)}(\xi)} d\xi \right) \psi_0(y) d\xi
  =\mathcal{I}_{J}^{(c)}\left(t,\tfrac{\cdot}{t}\right)\ast \psi_0(x),
\end{align*}
where we introduced the oscillatory integral
\begin{align*}
  \mathcal{I}_{J}^{(c)}(t,v)  =\frac{1}{(2\pi)^d}\int_{\R^d} 
  e^{it(v\cdot\xi-\Omega_J^{(c)}(\xi))}d \xi.
\end{align*}
In this subsection, we list basic analysis tools to estimate the oscillatory integrals. 
For one dimensional case, the decay rate of oscillatory integral is totally determined from the nondegeneracy of the derivatives of the phase function (see \cite[Chapter~VIII]{Steinbook-Hrmonic analysis}).

\begin{lemma}[Van der Corput Lemma]\label{Lem:Van}
  Let $k\in\Z^+$ and $|\phi^{(k)}(x)|\ge1$ for all $x\in[a,b]$ with $\phi'(x)$ monotonic in the case $k=1$. Then,
  \begin{align}\label{1d decay}
    \left| \int_{a}^b e^{i\lambda\phi(x)} dx \right| \le C_k\lambda^{-\frac{1}{k}}
  \end{align}
  and 
  \begin{align*}
  \left|\int_{a}^b e^{i\lambda\phi(x)} \eta(x)dx\right| \le C_k\left(\int_a^b|\eta'(x)|dx+|\eta(b)|\right) \lambda^{-\frac{1}{k}},
  \end{align*}
  where the constant $C_k$ is independent of $a,b$ and $\phi$.
  \end{lemma}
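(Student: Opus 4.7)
My plan is to prove both bounds by induction on the order $k$ of the derivative hypothesis, with the base case handled by an integration-by-parts argument, the inductive step handled by a covering-and-splitting argument around the (unique) near-zero of $\phi^{(k-1)}$, and finally the weighted statement deduced from the unweighted one by one more integration by parts. The main point I want to preserve throughout is that $C_k$ must depend only on $k$: no constant is allowed to see $a$, $b$, or $\phi$ itself, only the hypothesis $|\phi^{(k)}|\ge 1$.

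For the base case $k=1$, I would write
\begin{equation*}
\int_a^b e^{i\lambda\phi(x)}\,dx=\int_a^b\frac{1}{i\lambda\phi'(x)}\,\frac{d}{dx}\bigl(e^{i\lambda\phi(x)}\bigr)\,dx,
\end{equation*}
integrate by parts, and observe that because $\phi'$ is monotonic, $1/\phi'$ is also monotonic, so $(1/\phi')'$ does not change sign on $[a,b]$. This lets me estimate the remaining integral by the total variation of $1/\phi'$, which telescopes to $|1/\phi'(b)-1/\phi'(a)|$. Combined with the boundary terms, each of absolute value at most $1/(\lambda|\phi'|)\le 1/\lambda$, I get the bound $3/\lambda$, giving $C_1=3$.

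For the inductive step, assuming the result with constant $C_{k-1}$, I argue as follows. Since $|\phi^{(k)}|\ge 1$, the function $\phi^{(k-1)}$ is strictly monotone, hence vanishes at most once on $[a,b]$; call a putative zero $c$ (if none exists, the modification is obvious). For $\delta>0$ to be chosen, I split
\begin{equation*}
\int_a^b e^{i\lambda\phi(x)}\,dx=\int_{[a,b]\cap(c-\delta,c+\delta)}+\int_{[a,b]\setminus(c-\delta,c+\delta)}.
\end{equation*}
The first piece is bounded trivially by $2\delta$. On each of the (at most two) intervals forming the second piece, the mean value theorem combined with $|\phi^{(k)}|\ge 1$ yields $|\phi^{(k-1)}(x)|\ge\delta$; applying the inductive hypothesis to $\phi/\delta$ (whose $(k-1)$-st derivative has absolute value at least $1$) with parameter $\lambda\delta$ bounds that piece by $2C_{k-1}(\lambda\delta)^{-1/(k-1)}$. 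Optimizing via $\delta=\lambda^{-1/k}$ yields an overall bound $C_k\lambda^{-1/k}$ with $C_k=2+2C_{k-1}$.

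Finally, for the weighted estimate I would set $F(x):=\int_a^x e^{i\lambda\phi(t)}\,dt$ and integrate by parts:
\begin{equation*}
\int_a^b e^{i\lambda\phi(x)}\eta(x)\,dx=F(b)\eta(b)-\int_a^b F(x)\eta'(x)\,dx.
\end{equation*}
The unweighted bound already established applies uniformly on each subinterval $[a,x]$ (the constant is independent of the endpoint), so $\|F\|_{L^\infty[a,b]}\le C_k\lambda^{-1/k}$, and the stated inequality follows. The only subtle step is the inductive one, since the splitting must be done without knowing whether $\phi^{(k-1)}$ actually vanishes in $[a,b]$; I will treat the cases "no zero," "one interior zero," and "zero at an endpoint" uniformly by letting $c$ be any point minimizing $|\phi^{(k-1)}|$ and noting that the lower bound $|\phi^{(k-1)}(x)|\ge\delta$ outside $(c-\delta,c+\delta)$ still holds by the same MVT argument. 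This is the only place where some care is needed; everything else is routine.
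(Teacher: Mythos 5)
Your proof is correct and reproduces the standard Van der Corput argument from Stein's \emph{Harmonic Analysis}, which is exactly the reference the paper cites for this lemma rather than supplying its own proof; so there is no divergence in approach to report. The one place worth spelling out when you write this up: in the inductive step, when you pass from $k$ to $k-1$ with $k-1=1$, the base case's extra hypothesis that $\phi'$ be monotone needs to be checked --- it holds because $|\phi''|\ge 1$ forces $\phi''$ to have constant sign, hence $\phi'$ (and the rescaled $\phi'/\delta$) is strictly monotone, but you don't currently say this.
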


  For multi dimensional case, the rank of Hessian matrix of the phase function plays a crucial role (see \cite[Chapter~8]{Steinbook-Functionalanalysis}).
  \begin{lemma}[Stationary phase method]\label{Lem:Stationary phase method}
  For given a real-valued phase function $\Omega\in C^\infty(\R^d)$ and amplitude function $\eta\in C_0^\infty(\R^d)$, assume that 
  \begin{align}
   \textup{rank}\textbf{\textup{H}}(\Omega) = m \text{ on the support of } \eta,
  \end{align}
  for $0<m\le d$, in other words,
   there exist a coordinate $x=(x',x'')\in \R^m\times\R^{d-m}$ 
   with a basis $\{\mathbf{u}_j\}_{j=1}^d$
   such that 
  \begin{align}
    \left| \textup{det} \He_{\{\mathbf{u}_1,\cdots,\mathbf{u}_m\}}\Omega \right| \ge 1 \text{ on the support of } \eta.
   \end{align}
  Then,
  \begin{align*}
  \left| \int_{\R^d}e^{i\lambda \Omega(x)}\eta(x)dx\right|\le 
  C_m \left|\textup{supp}\;\eta\right|
  C_{\Omega,\eta}\lambda^{-\frac m2},
  \end{align*}
  where 
  \begin{align*}
  (C_{\Omega,\eta})^2 &\le  1+\|\eta\|_{C^{m+1}} \max_{2\le |\alpha| \le m+2}\{
    C_\alpha : \sup_{x\in \textup{supp} \eta} |\partial^\alpha\Omega(x)|\le C_\alpha, \},\end{align*}
  and $C_m$ depends only on $m$.
  \end{lemma}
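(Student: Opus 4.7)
The plan is to reduce the estimate to the classical $m$-dimensional non-degenerate stationary phase bound via Fubini, and then invoke the standard dyadic/integration-by-parts argument (see Stein, \emph{Harmonic Analysis}, Ch.~VIII, \S2). Using the orthonormal basis $\{\mathbf{u}_j\}_{j=1}^d$ supplied by the hypothesis, I change coordinates so that $x = (x', x'') \in \R^m \times \R^{d-m}$; since $\{\mathbf{u}_j\}$ is orthonormal the Jacobian is $1$, and the hypothesis becomes $|\det \He_{x'}\Omega(x', x'')| \geq 1$ on $\textup{supp}\,\eta$. By Fubini, the oscillatory integral equals
\begin{equation*}
 \int_{\R^{d-m}} I(x'';\lambda) \, dx'', \qquad I(x'';\lambda) := \int_{\R^m} e^{i\lambda \Omega(x',x'')}\eta(x',x'')\, dx',
\end{equation*}
so it suffices to bound the inner integral uniformly in $x''$.

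Fix $x''$ and write $\phi(x') := \Omega(x', x'')$; the hypothesis forces $D_{x'}^2\phi$ to be invertible with $|\det D_{x'}^2\phi| \geq 1$ on the $x'$-slice $S_{x''}$ of $\textup{supp}\,\eta$. The key inner estimate is $|I(x'';\lambda)| \leq C_m C_{\Omega,\eta} |S_{x''}| \lambda^{-m/2}$, which follows from a smooth dyadic decomposition in $|\nabla_{x'}\phi|$. On the \emph{far} region $\{|\nabla_{x'}\phi| \gtrsim \lambda^{-1/2}\}$, iterated integration by parts against the first-order differential operator $L = (i\lambda)^{-1}|\nabla_{x'}\phi|^{-2}\nabla_{x'}\phi \cdot \nabla_{x'}$, which satisfies $L(e^{i\lambda\phi}) = e^{i\lambda\phi}$, yields arbitrary polynomial decay; $\lceil m/2 \rceil$ iterations produce the $\lambda^{-m/2}$ bound, with adjoint terms introducing derivatives of $\eta$ up to order $m+1$ and of $\Omega$ up to order $m+2$, which is precisely where the stated form of $C_{\Omega,\eta}$ comes from. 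On the complementary \emph{near} region $\{|\nabla_{x'}\phi| \lesssim \lambda^{-1/2}\}$, the inverse function theorem together with $|\det D_{x'}^2\phi| \geq 1$ shows that $x' \mapsto \nabla_{x'}\phi$ has uniformly bounded inverse Jacobian, so this region has $m$-dimensional measure $\lesssim \lambda^{-m/2}$; bounding $|e^{i\lambda\phi}\eta|$ trivially by $\|\eta\|_\infty$ gives a contribution of the same order.

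Summing the two contributions gives the inner estimate, and integrating over $x''$ yields the factor $\int |S_{x''}|\, dx'' = |\textup{supp}\,\eta|$. The delicate step, rather than a true obstacle, is the bookkeeping inside the integration-by-parts argument: one must verify that after $\lceil m/2 \rceil$ iterations of $L^*$ on $\eta$, all the derivatives that appear can be absorbed into $\|\eta\|_{C^{m+1}}$ together with $\max_{2 \leq |\alpha| \leq m+2}\sup_{\textup{supp}\,\eta}|\partial^\alpha \Omega|$, so that the Hessian lower bound enters only through the dimensional constant $C_m$.
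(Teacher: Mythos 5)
The paper offers no proof of this lemma --- it states it as a standard result and refers the reader to Stein, \emph{Functional Analysis}, Chapter 8. Your sketch reproduces the standard route for that result: reduce to a nondegenerate $m$-dimensional oscillatory integral via Fubini, split dyadically around the critical set, integrate by parts with the operator $L$ in the far region, and use the Hessian lower bound to control the measure of the near region. So your approach matches the argument the paper is implicitly invoking.

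Two steps nonetheless deserve tightening. First, the inner bound $|I(x'';\lambda)|\lesssim |S_{x''}|\lambda^{-m/2}$ is stronger than what the dyadic/integration-by-parts argument actually delivers, and it fails when the slice $S_{x''}$ is thin compared to $\lambda^{-1/2}$ (in that regime the trivial bound $|S_{x''}|$ cannot be improved). The classical local estimate is $\lesssim\lambda^{-m/2}$ for support contained in a fixed-radius ball, and the factor $|\textup{supp}\,\eta|$ in the lemma's conclusion arises by tiling $\textup{supp}\,\eta$ with unit cubes and summing the local bounds, not by integrating a slice-weighted estimate in $x''$. Second, on the near region $\{|\nabla_{x'}\phi|\lesssim\lambda^{-1/2}\}$, the lower bound $|\det D^2_{x'}\phi|\ge 1$ gives only a \emph{local} inverse Jacobian bound; to conclude the measure bound $\lesssim\lambda^{-m/2}$ you additionally need a multiplicity bound for $\nabla_{x'}\phi$, which is why the standard proof first shrinks to cubes of radius comparable to $1/\sup|\partial^3\Omega|$ on which $\nabla_{x'}\phi$ is injective by a quantitative inverse function theorem --- this is precisely where the higher-derivative constants in $C_{\Omega,\eta}$ enter. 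Reorganized around a unit-cube cover rather than a Fubini slicing, your sketch becomes a correct rendering of the proof the paper is citing.
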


\subsection{Hessian of radial function}
In the previous subsection, we found that the Hessian matrix of phase function played an essential role in analysis of oscillatory integrals.
Recall that $\Omega_J^{(c)}$ in the phase function in \eqref{oscillatory integral} is radial, so the formula for Hessian is quite simple.
\begin{lemma}[Hessian of radial function]\label{Lem:Hessian}
Let $\omega:[0,\infty)\rightarrow\R$ and $\Omega:\R^d\rightarrow \R$ be given by $\Omega(\xi)=\omega(|\xi|)$.
Then, for $\xi\in\R^d\setminus\{0\}$
\begin{equation}\label{radial Hessian}
\textup{det}(\He\Omega)(\xi)
=\omega''(|\xi|)\big\{\omega'(|\xi|)|\xi|^{-1}\big\}^{d-1}.
\end{equation}
\end{lemma}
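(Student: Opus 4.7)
The plan is to compute the Hessian matrix of $\Omega$ directly, recognize its structure as a rank-one perturbation of a scalar multiple of the identity, and read off its eigenvalues; the determinant is then just the product of these eigenvalues.

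First I would apply the chain rule using $\partial_i |\xi| = \xi_i/|\xi|$ (valid since $\xi \neq 0$) to get $\partial_i \Omega(\xi) = \omega'(|\xi|)\,\xi_i/|\xi|$. A second differentiation, together with $\partial_j(\xi_i/|\xi|) = \delta_{ij}/|\xi| - \xi_i\xi_j/|\xi|^3$, produces
\begin{equation*}
\partial_i\partial_j \Omega(\xi) \;=\; \Bigl(\omega''(|\xi|) - \tfrac{\omega'(|\xi|)}{|\xi|}\Bigr)\frac{\xi_i\xi_j}{|\xi|^2} \;+\; \tfrac{\omega'(|\xi|)}{|\xi|}\,\delta_{ij}.
\end{equation*}
In matrix form, writing $\hat{\xi} = \xi/|\xi|$, this says $\mathbf{H}\Omega(\xi) = \alpha\, I_d + \beta\, \hat{\xi}\hat{\xi}^{T}$ with $\alpha = \omega'(|\xi|)/|\xi|$ and $\beta = \omega''(|\xi|) - \omega'(|\xi|)/|\xi|$.

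Next I would diagonalize this matrix by inspection. Since $\hat{\xi}\hat{\xi}^{T}$ is a rank-one orthogonal projector onto $\mathrm{span}(\hat{\xi})$, the unit vector $\hat{\xi}$ is an eigenvector of $\mathbf{H}\Omega(\xi)$ with eigenvalue $\alpha + \beta = \omega''(|\xi|)$, while every vector orthogonal to $\hat{\xi}$ is an eigenvector with eigenvalue $\alpha = \omega'(|\xi|)/|\xi|$; this latter eigenspace is $(d-1)$-dimensional. Taking the product of eigenvalues yields
\begin{equation*}
\det(\mathbf{H}\Omega)(\xi) \;=\; \omega''(|\xi|)\left(\tfrac{\omega'(|\xi|)}{|\xi|}\right)^{d-1},
\end{equation*}
which is the desired identity.

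There is no real obstacle here; the only subtlety is careful bookkeeping with the chain rule and recognition of the rank-one structure. As a sanity check (and as an alternative route), one may use the rotational symmetry of $\Omega$: both sides of \eqref{radial Hessian} depend only on $|\xi|$, and an orthogonal change of coordinates preserves determinants, so it suffices to verify the identity at $\xi = |\xi|\,\mathbf{e}_1$, where the second-derivative matrix is diagonal with entries $\omega''(|\xi|)$ and $d-1$ copies of $\omega'(|\xi|)/|\xi|$.
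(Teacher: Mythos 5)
Your proof is correct. Both you and the paper begin with the identical chain-rule computation, expressing the Hessian as $\alpha I_d + \beta\,\hat{\xi}\hat{\xi}^{T}$ with $\alpha = \omega'(|\xi|)/|\xi|$ and $\beta = \omega''(|\xi|) - \omega'(|\xi|)/|\xi|$. Where you diverge is in evaluating the determinant: the paper performs Gaussian elimination on the rows (subtracting $\frac{\xi_j}{\xi_1}$ times the first row from the $j$-th row to kill the rank-one part, assuming $\xi_1\neq 0$ without loss of generality), whereas you recognize the matrix as a rank-one perturbation of a multiple of the identity and read off the eigenvalues directly: $\alpha+\beta = \omega''(|\xi|)$ on $\mathrm{span}(\hat{\xi})$, and $\alpha$ with multiplicity $d-1$ on the orthogonal complement. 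The eigenvalue route is a bit more conceptual and avoids the case analysis on which coordinate of $\xi$ is nonzero; the row-reduction route is more hands-on and requires no appeal to the spectral theorem. Your final remark about reducing to $\xi = |\xi|\,\mathbf{e}_1$ by rotational invariance is a clean third variant that makes the diagonal structure literal, and is arguably the shortest write-up of all.
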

\begin{proof}
Without loss of generality, we assume $\xi_1\neq0$.
We compute 
\begin{align*}
  \partial_{\mathbf{e}_i}\partial_{\mathbf{e}_j}\Omega(\xi)
&=\omega''(|\xi|)\frac{\xi_i\xi_j}{|\xi|^2}+\omega'(|\xi|)\Big( \frac{\delta_{ij}}{|\xi|}-\frac{\xi_i\xi_j}{|\xi|^3} \Big) \\
&=\frac{\omega'(|\xi|)}{|\xi|}\delta_{ij}+\Big( \omega''(|\xi|)-\frac{\omega'(|\xi|)}{|\xi|} \Big)\frac{\xi_j\xi_j}{|\xi|^2}.
\end{align*}
For notational convenience, we denote 
\begin{equation}\label{A and B}
A=\frac{\omega'(|\xi|)}{|\xi|},\quad B= \omega''(|\xi|)-\frac{\omega'(|\xi|)}{|\xi|}.
\end{equation}
Then, we have $\nabla\partial_{\mathbf{e}_j}\Omega(\xi)=A\mathbf{e}_j+(B\frac{\xi_j}{|\xi|^2})\xi$. Thus, by Gaussian elimination (or the row reduction), we obtain
\begin{align}\begin{aligned}\label{GE}
  \textup{det}(\He\Omega)(\xi)&=\textup{det}\begin{bmatrix}
A\mathbf{e}_1+(B\frac{\xi_1}{|\xi|^2})\xi\\
A\mathbf{e}_2+(B\frac{\xi_2}{|\xi|^2})\xi\\
\cdots\\
A\mathbf{e}_d+(B\frac{\xi_d}{|\xi|^2})\xi
\end{bmatrix}=\textup{det}\begin{bmatrix}
A\mathbf{e}_1+(B\frac{\xi_1}{|\xi|^2})\xi\\
A\mathbf{e}_2-\frac{\xi_2}{\xi_1}A\mathbf{e}_1\\
\cdots\\
A\mathbf{e}_d-\frac{\xi_d}{\xi_1}A\mathbf{e}_1
\end{bmatrix}\\
&=\textup{det}\begin{bmatrix}
(A+B)\mathbf{e}_1\\
A\mathbf{e}_2-\frac{\xi_2}{\xi_1}A\mathbf{e}_1\\
\cdots\\
A\mathbf{e}_d-\frac{\xi_d}{\xi_1}Ae_1
\end{bmatrix}=\textup{det}\begin{bmatrix}
(A+B)\mathbf{e}_1\\
A\mathbf{e}_2\\
\cdots\\
A\mathbf{e}_d
\end{bmatrix}=A^{d-1}(A+B).
\end{aligned}\end{align}
Since $A+B=\omega''(|\xi|)$, we prove the desired formula.
\end{proof}

\section{Strichartz estimates: Odd case}

Throughout of the paper, we are interested in asymptotic phenomena of higher-order equation in terms of $c$, thus, in what follows, we fix $\hbar=m=1$ via rescaling, for simplicity of calculation.

When $J=1$ corresponding to the Schrödinger equations,
we can easily show from \eqref{radial Hessian} that 
$$\textup{det}\He \left( v\cdot\xi-\tfrac12|\xi|^2\right)=1 \text{ for all } \xi\in\R^d, $$ which gives by Lemma~\ref{Lem:Stationary phase method} the dispersive estimates
\begin{align*} 
 \sup_{v\in\R^d}\frac{1}{(2\pi)^d}\int_{\R^d} 
 e^{it(v\cdot\xi-\frac12|\xi|^2)}d \xi \le A t^{-\frac d2}.
\end{align*}
In this section, we show that the argument 
can be extended to all odd cases $J\ge3$.

\begin{proposition}\label{odd hessian}
  Let $J\ge3$ be an odd integer. Then, 
  \begin{align} \label{det Hessian}
    \textup{det}\He (\Omega^{(c)}_J)(\xi)
    \gs (1+\tfrac{|\xi|}c)^{(2J-2)d},    
    \end{align}
and as a consequence we have
  \begin{align}\label{time decay}
    \sup_{v\in\R^d} \left| \mathcal{I}_J^{(c)}(t,v) \right| \ls  t^{-\frac d2},
  \end{align}  
  where the implicit constants are independent of $c$.
\end{proposition}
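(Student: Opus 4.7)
The plan is to reduce \eqref{det Hessian} to a one-variable inequality via Lemma~\ref{Lem:Hessian}, which gives
$$\det\textbf{H}(\Omega_J^{(c)})(\xi) = \omega_J^{(c)''}(|\xi|)\,\Bigl(\tfrac{\omega_J^{(c)'}(|\xi|)}{|\xi|}\Bigr)^{d-1}.$$
The key algebraic observation is that, with the change of variable $\tau=(|\xi|/c)^2$, the factors $\omega_J^{(c)'}(r)/r$ and $\omega_J^{(c)''}(r)$ are \emph{exactly} the $J$-term Taylor polynomials at $\tau=0$ of $(1+\tau)^{-1/2}$ and $(1+\tau)^{-3/2}$ respectively---this is immediate from differentiating \eqref{def omega} termwise (with $\hbar=m=1$) and comparing with the binomial series. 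So the proof of \eqref{det Hessian} reduces to a lower bound $G_J(\tau)\gs(1+\tau)^{J-1}$ on these two truncated binomial series for odd $J$.

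To prove that lower bound I would combine two ingredients. For moderate $\tau$, I would apply Taylor's theorem with Lagrange remainder to $G(x)=(1+x)^{-\sigma}$ with $\sigma\in\{1/2,3/2\}$, giving
$$G_J(\tau)=G(\tau)-\tfrac{G^{(J)}(\zeta)}{J!}\tau^J$$
for some $\zeta\in(0,\tau)$. Since $G^{(J)}(x)$ is a positive constant times $(-1)^J(1+x)^{-\sigma-J}$, odd $J$ makes the remainder negative, so $G_J(\tau)\ge G(\tau)>0$ throughout $[0,\infty)$; by continuity this gives a positive lower bound on any bounded $\tau$-interval. For large $\tau$, the leading coefficient $\binom{-\sigma}{J-1}$ is \emph{positive} because $J-1$ is even, so $G_J(\tau)\sim c_J\tau^{J-1}$ and eventually dominates its lower-order terms. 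Splicing the two regimes produces $G_J(\tau)\gs 1+\tau^{J-1}\asymp(1+|\xi|/c)^{2(J-1)}$, and substituting into the radial Hessian formula yields \eqref{det Hessian}.

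For the dispersive estimate \eqref{time decay}, I would first remove the $c$-dependence by the scaling $\xi=c\eta$, which gives $\mathcal{I}_J^{(c)}(t,v)=c^d\tilde{\mathcal{I}}_J(tc^2,v/c)$ for the $c$-independent integral $\tilde{\mathcal{I}}_J$ built from $\tilde\Omega_J(\eta):=c^{-2}\Omega_J^{(c)}(c\eta)$; it then suffices to show $|\tilde{\mathcal{I}}_J(s,w)|\ls s^{-d/2}$ uniformly in $w$. I would use the Littlewood--Paley decomposition from the Preliminaries to split the $\eta$-integral into dyadic shells $|\eta|\sim 2^N$, $N\ge 0$, and on each shell perform the further rescaling $\eta=2^N\zeta$. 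The phase then takes the form $s\cdot 2^{2JN}\Psi_N(\zeta)$, where $\Psi_N$ has $C^k$-norms bounded uniformly in $N$ on $|\zeta|\sim 1$ and, thanks to \eqref{det Hessian} applied to $\tilde\Omega_J$, satisfies $|\det\textbf{H}_\zeta\Psi_N|\gs 1$. Lemma~\ref{Lem:Stationary phase method} then bounds the rescaled integral by $(s\cdot 2^{2JN})^{-d/2}$, and multiplying by the Jacobian $2^{Nd}$ gives a per-shell contribution of $s^{-d/2}\,2^{-(J-1)Nd}$. For $J\ge 3$ this is a convergent geometric series in $N\ge 0$, and together with the low-frequency piece (handled directly by Lemma~\ref{Lem:Stationary phase method} using $|\det\textbf{H}|\gs 1$) this yields \eqref{time decay}.

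The hard part of the argument will be the quantitative lower bound in the second paragraph: not merely positivity of the truncated binomial series but genuine growth $(1+\tau)^{J-1}$, uniform in $c$. Odd $J$ enters in two mutually reinforcing ways---it pins down the sign of $G^{(J)}$ in the Taylor remainder (giving positivity of the partial sum) and the sign of $\binom{-\sigma}{J-1}$ (giving coercive growth at infinity)---and this is precisely the algebraic mechanism that fails for even $J$, explaining the gap between Theorem~\ref{Thm:Strichartz odd order HSE} and Theorem~\ref{Thm:Strichartz even hLS}.
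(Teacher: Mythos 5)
Your proposal is correct, and it reaches the same two conclusions by a route that differs in several concrete details from the paper's. For the Hessian bound, the paper also reduces via Lemma~\ref{Lem:Hessian} to bounding $(\omega_J^{(c)})''(r)$ and $r^{-1}(\omega_J^{(c)})'(r)$, but it treats the two factors \emph{asymmetrically}: $r^{-1}(\omega_J^{(c)})'(r)$ is bounded below (for all $r$) by pairing each negative term with its two neighbors via the AM--GM inequality applied to the central binomial coefficients, yielding the sharp-looking bound $r^{-1}(\omega_J^{(c)})'(r)\ge \frac12+\frac{(2J-2)!}{((J-1)!)^2 2^{2J-2}}(r/c)^{2J-2}$, while $(\omega_J^{(c)})''(r)$ is handled with a case split: for $r\ge c$ the summands are shown to be individually positive, and for $0\le r<c$ the paper uses Taylor's remainder (with the sign fixed by $J$ odd) much as you do. Your observation that, in the variable $\tau=(r/c)^2$, both factors are exactly the degree-$(J-1)$ Taylor polynomials of $(1+\tau)^{-1/2}$ and $(1+\tau)^{-3/2}$ is correct, and passing through $\tau$ lets you apply the Lagrange-remainder sign argument \emph{on all of} $[0,\infty)$ simultaneously for both factors, eliminating the case split and the AM--GM pairing. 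The price is that your lower bound $G_J(\tau)\gs(1+\tau)^{J-1}$ is obtained by splicing, so the implicit constant is less explicit; the paper's constants are cleaner, yours are more systematically derived.

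For the dispersive estimate, both proofs proceed by dyadic decomposition plus the stationary-phase lemma. The paper decomposes $\xi$ directly into a low piece $\chi(\xi/c)$ and shells $\chi_N$, $2^N>c$, and rescales each piece separately; because it does not renormalize the phase before applying Lemma~\ref{Lem:Stationary phase method}, the constant $C_{\Omega,\eta}$ contributes an extra factor $(1+R/c)^{J-1}$ and the per-shell bound is $(1+2^N/c)^{(J-1)(1-d)}t^{-d/2}$, which converges only for $d\ge 2$; the case $d=1$ is then done separately via Van der Corput. Your approach of first pulling out $c$ via $\xi=c\eta$ and then rescaling each shell by $2^N$ with the phase renormalized to $\Psi_N=2^{-2JN}\Phi_N$ gives the cleaner per-shell bound $s^{-d/2}2^{-(J-1)Nd}$, which converges for all $d\ge1$ (and already for $J\ge 2$), so no separate one-dimensional argument is needed. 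The two computations are equivalent after normalization; yours is tighter and more uniform, the paper's is a bit quicker to write down. Either way the heart of the matter is identical: the determinant lower bound \eqref{det Hessian} tells you that the effective large parameter on the $N$-th shell is $s\cdot 2^{2JN}$ rather than $s\cdot 2^{2N}$, producing the geometric decay in $N$.

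One small caution on presentation: when you write that ``the phase then takes the form $s\cdot 2^{2JN}\Psi_N(\zeta)$'', this is shorthand for having factored out the \emph{effective} frequency scale rather than a literal homogeneity (the lower-order monomials in $\tilde\Omega_J(2^N\zeta)$ scale as $2^{2jN}$ with $j<J$). What actually saves you is exactly what you say next: after normalizing by $2^{-2JN}$, the $C^{k}$ norms of $\Psi_N$ (for $k\ge 2$) and its Hessian determinant are uniformly controlled on $|\zeta|\sim 1$ thanks to \eqref{det Hessian}. Stated that way the argument is watertight.
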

\begin{proof}
First, we show that \eqref{time decay} follows from  
\eqref{det Hessian}. When $d=1$, it is direct application of 
Lemma~\ref{Lem:Van}.
Also, when $d\ge2$, it can be shown by applying Lemma~\ref{Lem:Stationary phase method}. Indeed, we write 
\begin{align*}
  \mathcal{I}_{J}^{(c)}(t,v) =\frac{1}{(2\pi)^d}\int_{\R^d} 
  e^{it(v\cdot\xi-\Omega_J^{(c)}(\xi))} \chi(\tfrac{\xi}{c})d \xi
  +\sum_{\substack{N \in \Z \\ 2^N> c}} 
  \frac{1}{(2\pi)^d}\int_{\R^d} 
  e^{it(v\cdot\xi-\Omega_J^{(c)}(\xi))} \chi_N(\xi)d \xi,
\end{align*}
and change variables to obtain 
\[\begin{aligned}
 \mathcal{I}_{J}^{(c)}(t,v) &= 
 \tfrac{c^d }{(2\pi)^d}
 \int_{\R^d} e^{ic^2t(c^{-1}v\cdot\xi-c^{-2}\Omega_J^{(c)}(c\xi))} \chi(\xi)d \xi \\ 
 &\quad +\sum_{\substack{N \in \Z \\ 2^N> c}} \tfrac{2^{Nd}}{(2\pi)^d}  \int_{\R^d} e^{i2^{2N}t(2^{-N}v\cdot\xi-2^{-2N}\Omega_J^{(c)}(2^N\xi))} \chi_0(\xi)d \xi\\
 &=: \mathcal O_1 + \mathcal O_2.
\end{aligned}\]

Once proving that for $R>0$
\begin{equation}\label{eq:Oscillatory}
  R^d\left|  \int_{\R^d} e^{iR^2t(R^{-1}v\cdot\xi-R^{-2}\Omega_J^{(c)}(R\xi))} \eta(\xi)d \xi \right| 
  \ls 
  \begin{cases}
    \left(1+\tfrac{R}c\right)^{(J-1)}t^{-\frac d2} &\text{ for } \eta=\chi, \\ 
    \left(1+\tfrac{R}c\right)^{(J-1)(1-d)}t^{-\frac d2} &\text{ for } \eta=\chi_0, 
  \end{cases}
\end{equation}
we immediately have 
\[\mathcal O_1 \lesssim t^{-\frac{d}{2}},\]
and
\[\mathcal O_2 \lesssim \sum_{\substack{N \in \Z \\ 2^N > c}}  \left(1+\tfrac{2^N}c\right)^{(J-1)(1-d)}t^{-\frac d2} \lesssim t^{-\frac d2}\] 
due to $J\ge 3$ and $d\ge2$.  From \eqref{det Hessian}, we see that 
\begin{equation}\label{eq:prop3.1-1}
  \textup{det}\He \left( R^{-1}v\cdot\xi-R^{-2}\Omega_J^{(c)}(R\xi) \right)
 = \left[ \textup{det}\He (\Omega^{(c)}_J)\right](R\xi) \gs  \left(1+\tfrac{R|\xi|}{c}\right)^{(2J-2)d},
\end{equation}
and a direct computation gives 
\begin{equation}\label{eq:prop3.1-2}
\sup_{2\le|\alpha|\le d+2}  \Big\{
    C_\alpha : \sup_{\xi \in \textup{supp} \eta}\big|\partial_{\xi}^\alpha
  \big\{ (R^{-1}v\cdot\xi-R^{-2}\Omega_J^{(c)}(R\xi)) \big\} \big|\le C_\alpha, \Big\}
  \ls \left(1+\tfrac{R}c\right)^{2J-2}.
\end{equation}
Note that the right-hand side of \eqref{eq:prop3.1-1} is further bounded below by $1$ for $\eta = \chi$, and $\left(\tfrac{R}{c}\right)^{(2J-2)d}$ for $\eta = \chi_0$. Note also that the implicit constants in \eqref{eq:prop3.1-1} and \eqref{eq:prop3.1-2} are independent of both $v\in\R^d$ and $c$, but dependent on $J$. Now, we apply Lemma~\ref{Lem:Stationary phase method} to obtain 
\[  \mbox{LHS of } \eqref{eq:Oscillatory}
  \ls 
  \begin{cases}
    R^d\left(1+\tfrac{R}c\right)^{\frac12(2J-2)}\left( R^2t \right)^{-\frac d2} &\text{ for } \eta=\chi, \\ 
    R^d\left(1+\tfrac{R}c\right)^{\frac12(2J-2)}\left( \left(1+\tfrac{R}c\right)^{(2J-2)}R^2t \right)^{-\frac d2} &\text{ for } \eta=\chi_0, 
  \end{cases}
\]
which proves \eqref{eq:Oscillatory}.

Next, we prove  \eqref{det Hessian}. Recall from \eqref{radial Hessian} that 
\begin{align*}
  \textup{det}\He (\Omega_J^{(c)})(\xi)
  = (\omega_J^{(c)})''(|\xi|)\big\{ (\omega_J^{(c)})'(|\xi|)|\xi|^{-1}\big\}^{d-1}.
\end{align*}
Let $J=2J_0-1$ with $J_0\in \N$.
For $(\omega^{(c)}_J)'$, collecting the positive and negative terms, we write
$$r^{-1}  (\omega^{(c)}_J)'(r)=\sum_{j=0}^{J_0-1}\frac{(4j)!}{((2j)!)^22^{4j}c^{4j}}r^{4j}-\sum_{j=1}^{J_0-1}\frac{(4j-2)!}{((2j-1)!)^22^{4j-2}c^{4j-2}}r^{4j-2}.$$
For the second term, by the Cauchy-Schwarz inequality, we have
$$\begin{aligned}\frac{(4j-2)!}{((2j-1)!)^22^{4j-2}c^{4j-2}}r^{4j-2}
\leq\frac{1}{2}\frac{(4j-4)!}{((2j-2)!)^2 2^{4j-4}c^{4j-4}}r^{4j-4}+\frac{1}{2}\frac{(4j)!}{((2j)!)^22^{4j}c^{4j}}r^{4j},
\end{aligned}$$
which proves
$$r^{-1}  (\omega^{(c)}_J)'(r)\geq\frac{1}{2}+\frac{(2J-2)!}{((J-1)!)^22^{2J-2}}\left(\frac{r}{c}\right)^{2J-2}.$$
Similarly for $(\omega^{(c)}_J)''(r)$, we write
\begin{align*}
  (\omega^{(c)}_J)''(r)
&=\sum_{j=1}^{2J_0-1}\frac{(-1)^{j+1}(2j-1)!}{(j-1)!(j-1)!2^{2j-2}c^{2j-2}}r^{2j-2} \\
&=
\sum_{j=0}^{J_0-1}\frac{(4j+1)!}{((2j)!)^22^{4j}c^{4j}}r^{4j}
-\sum_{j=1}^{J_0-1}\frac{(4j-1)!}{((2j-1)!)^22^{4j-2}c^{4j-2}}r^{4j-2}\\
&=1
+\sum_{j=1}^{J_0-1}\frac{(4j+1)!}{((2j)!)^22^{4j}c^{4j}}r^{4j}\Big(1-\frac{c^2}{r^2}\frac{4j}{4j+1}\Big).
\end{align*}
When $r \geq c$, one can see that the summand is positive for all $1 \le j \le J_0-1$, thus we have
$$(\omega^{(c)}_J)''(r)\geq 1 + \frac{(2J-2)!}{((J-1)!)^22^{2J-2}c^{2J-2}}r^{2J-2}.$$
On the other hand, for $0\leq r<c$, since $\omega^{(c)}_J$ is Taylor expansion of
\begin{align*} 
c^2 \Big\{ \sqrt{ 1+\left(\tfrac{r}{c}\right)^2 }  -1 \Big\},
\end{align*}
by the term-by-term differentiation and Taylor's theorem, we obtain
%
\begin{align*} 
\left(1+\left(\frac{r}{c}\right)^2 \right)^{-\frac32}
=(\omega^{(c)}_J)''(r) +  \frac{(-1)^J(2J+1)!}{(J)!(J)!2^{2J}c^{2J}}(r_*)^{2J}  
\end{align*}
for some $r_*\in[0,r)$.
Since $J$ is odd, we conclude that
for $0\leq r<c$
\begin{align*}
  (\omega^{(c)}_J)''(r) 
 \ge \left(1+\left(\frac{r}{c}\right)^2 \right)^{-\frac32} 
 \ge 2^{-\frac32}.
\end{align*}
\end{proof}
\color{black}

\begin{remark}
  For one dimensional case, we applied Lemma~\ref{Lem:Van} in the above proof, where the constant in \eqref{1d decay} only depends on the lower bound of second derivative of phase function. Thus, one sees that if $d=1$, the implicit constant in \eqref{time decay} is also independent of $J$.
\end{remark}

Recall that 
\begin{align*}
  e^{-it\mathcal{H}_J^{(c)}}\psi_0(x) 
  =\mathcal{I}_{J}^{(c)}\left(t,\tfrac{\cdot}{t}\right)\ast \psi_0(x).
\end{align*}
So, by Young's inequality and \eqref{time decay} we have 
\begin{align*}
  \left\|   e^{-it\mathcal{H}_J^{(c)}}\psi_0 \right\|_{L^\infty(\R^d) }
  \le  \sup_{v\in\R^d} \left| \mathcal{I}_J^{(c)}(t,v) \right| \|\psi_0\|_{L^1}
  \le At^{-\frac d2} \| f\|_{L^1(\R^d)}.
\end{align*}
It is well-known that \eqref{hLS} enjoys mass conservation law
\begin{align*} 
\left \| e^{-it\mathcal{H}_J^{(c)}}\psi_0 \right\|_{ L^2(\R^d) }=\| \psi_0 \|_{ L^2(\R^d) }
\text{ for all } t\in \R.
\end{align*}
Interpolating two estimates, we obtain
for $2\le p \le \infty$
\begin{align*} 
  \left \| e^{-it\mathcal{H}_J^{(c)}}\psi_0 \right\|_{ L^p(\R^d) }
   \ls 
   t^{-\frac d2(1-\frac2p)}
   \| f \|_{ L^{p'}(\R^d) }.
\end{align*}
Now, the Strichartz estimates \eqref{Strichartz for odd} follows from the well-known $TT^*$ argument in \cite{KT1998}. We omit the details.


\section{Strichartz estimates: Even case}
As we did in the odd case, we will apply Lemma~\ref{Lem:Stationary phase method} to estimate the kernel of linear solution. So, we begin with describing the properties of the dispersion function, $\Omega^{(c)}_J$.
\subsection{Hessian of the phase function}
We examine the behavior of the radial function $\omega^{(c)}_J$ satisfying that  $\Omega^{(c)}_J(\xi)=\omega^{(c)}_J(|\xi|)$ for even case, $J\in 2\N$ (see \eqref{def omega}).      
\begin{lemma}\label{Lem:even order phase function}
  Let $J\in 2\N$. Then, there exists $A>0$ only depending on $J$ satisfying the followings:
  \begin{enumerate}
    \item 
    $(\omega^{(c)}_J)'''(r) \le -3\cdot2^{-\frac52}c^{-2}r$.  \label{bound for 3 omega}
  \item $(\omega^{(c)}_J)''$ has a unique zero $r_2$ in $(\frac c2,c)$. Furthermore,
$$ 
(\omega^{(c)}_J)''(r)>\tfrac58 \text{ for } r\le \tfrac c2 \text{ and } 
(\omega^{(c)}_J)''(r)<-A\left( \tfrac{r}{c}\right)^{2J-2} \text{ for } r\ge c.
$$
 \item $(\omega^{(c)}_J)'$ has a unique zero $r_1\in (c,2c)$. Furthermore, 
    $$r^{-1}(\omega^{(c)}_J)'(r)>\tfrac{1}{2} \text{ for }r<c \text{ and }      
    r^{-1}(\omega^{(c)}_J)'(r)< -A\left(\tfrac{r}{c} \right)^{2J-2} \text{ for }  r>2c.$$
  \end{enumerate} 
\end{lemma}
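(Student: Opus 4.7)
My plan is to rescale out the $c$-dependence: set $u=r/c$ and $h_J(u):=c^{-2}\omega_J^{(c)}(cu)=\sum_{j=1}^J\binom{1/2}{j}u^{2j}$, a $c$-independent polynomial. Then $(\omega_J^{(c)})'(r)=ch_J'(u)$, $(\omega_J^{(c)})''(r)=h_J''(u)$, and $(\omega_J^{(c)})'''(r)=c^{-1}h_J'''(u)$, so the three claims reduce to statements about $h_J$ alone: (1) $h_J'''(u)\le -3\cdot 2^{-5/2}u$ on $[0,\infty)$, (2) $h_J''$ has a unique zero in $(\tfrac12,1)$ with the indicated sign bounds, and (3) the analogue for $u^{-1}h_J'(u)$ in $(1,2)$. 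Note $h_J(u)=P_J(u^2)$ where $P_J$ is the degree-$J$ Taylor polynomial of $\tilde G(s)=\sqrt{1+s}-1$; the even parity of $J$ makes $\tilde G^{(J+1)}(t)>0$ on $t\ge 0$, and this sign fact is used repeatedly.

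For (1) I would split at $u=1$. On $0<u\le 1$, the integral form of Taylor's remainder
\[
\tilde G^{(k)}(s)-P_J^{(k)}(s)=\int_0^s\frac{(s-t)^{J-k}}{(J-k)!}\tilde G^{(J+1)}(t)\,dt>0 \qquad (0\le k\le J,\ s>0)
\]
combined with the chain-rule identity $h_J'''(u)=12uP_J''(u^2)+8u^3P_J'''(u^2)$ yields $h_J'''(u)<G'''(u)=-3u(1+u^2)^{-5/2}\le -3\cdot 2^{-5/2}u$. On $u\ge 1$ the comparison with $G'''$ is too weak, so I would pair terms in the alternating polynomial directly: writing $h_J'''(u)=-|c_2|u+\sum_{k=1}^{J/2-1}\bigl(|c_{2k+1}|u^{4k-1}-|c_{2k+2}|u^{4k+1}\bigr)$ with $|c_j|=2j(2j-1)(2j-2)|\binom{1/2}{j}|$, the coefficient ratio $|c_{2k+2}|/|c_{2k+1}|=(4k+3)/(4k)>1$ forces each bracket to be nonpositive on $u\ge 1$, leaving $h_J'''(u)\le -|c_2|u=-3u\le -3\cdot 2^{-5/2}u$.

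Parts (2) and (3) then follow by the same kind of pairing used in the proof of Proposition 3.1. Regrouping gives $h_J''(u)=\sum_{k=1}^{J/2}\bigl(|d_{2k-1}|u^{4k-4}-|d_{2k}|u^{4k-2}\bigr)$ with $|d_j|=2j(2j-1)|\binom{1/2}{j}|$ and ratio $|d_{2k}|/|d_{2k-1}|=(4k-1)/(2(2k-1))$: each bracket is nonnegative precisely when $u^2\le 2(2k-1)/(4k-1)$. Hence on $[0,\tfrac12]$ every pair is nonnegative and the first alone gives $1-3u^2/2\ge\tfrac58$; on $[1,\infty)$ every pair is nonpositive, and the terminal pair contributes $-(|d_J|-|d_{J-1}|)u^{2J-2}=-|d_{J-1}|u^{2J-2}/(2J-2)$, producing the stated bound $-Au^{2J-2}$. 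Uniqueness of the zero comes from strict decrease $(h_J'')'=h_J'''<0$ (part (1)), and existence from the intermediate value theorem. For (3) I would run the analogous pairing on $u^{-1}h_J'(u)=\sum_k\bigl(|b_{2k-1}|u^{4k-4}-|b_{2k}|u^{4k-2}\bigr)$ with $|b_j|=2j|\binom{1/2}{j}|$ and ratio $|b_{2k}|/|b_{2k-1}|=(4k-3)/(2(2k-1))<1$; the thresholds shift so that the pair bounds now hold on $[0,1)$ and $[2,\infty)$, and strict monotonicity of $u^{-1}h_J'(u)$ comes from a second Taylor-comparison argument analogous to the $u\le 1$ part of (1) applied to $(u^{-1}h_J'(u))'=2u\tilde q'(u^2)$ where $\tilde q'$ is the Taylor polynomial of $2\tilde G''<0$.

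The main obstacle is the $u\ge 1$ part of (1). The Taylor comparison against $G'''$ is too weak because $G'''(u)\to 0$ at infinity, and the three-term AM--GM pairing that succeeds in Proposition 3.1 fails here since a direct calculation gives $|c_j|^2/(|c_{j-1}||c_{j+1}|)=(4j^2-6j+2)/(4j^2-6j-4)>1$ for the third-derivative coefficients. The two-term pairing above sidesteps this by leaving the leading negative term unpaired, and the small arithmetic miracle $3>3\cdot 2^{-5/2}$ is what exactly bridges the leftover to the stated bound.
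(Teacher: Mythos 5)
Your proposal is correct and follows essentially the same route as the paper's proof: split at $r=c$ (equivalently $u=1$), use the positivity of the Taylor remainder of $(1+s)^{1/2}$ coming from the even parity of $J$ to get the bound below the light cone, and pair consecutive alternating terms with the coefficient-ratio inequality to get the bound above it; the $\tfrac58$ bound and the uniqueness of zeros are handled by the same monotonicity information. The upfront rescaling $u=r/c$ is a slightly cleaner way to make the $c$-independence manifest, and your uniqueness argument for $r_1$ via strict monotonicity of $u^{-1}h_J'(u)$ replaces the paper's unimodality-of-$\omega'$ observation, but these are cosmetic variations on the same idea.
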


  \begin{proof} Let $J=2J_0$ with $J_0\in\N$.
  We begin with verifying estimates \eqref{bound for 3 omega} for the third derivative. For $r\ge c$, we have
  \begin{align*}
    (\omega^{(c)}_J)'''(r)&=\sum_{j=2}^{2J_0}\frac{(-1)^{j+1}(2j-1)!}{(j-2)!(j-1)!2^{2j-3}c^{2j-2}}r^{2j-3} \\
  &=-\frac{3r}{c^2}+ \sum_{j=1}^{J_0-1}\frac{(4j+1)!}{(2j)!(2j-1)!2^{4j-1}c^{4j}}r^{4j-1}
  -\sum_{j=1}^{J_0-1}\frac{(4j+3)!}{(2j+1)!(2j)!2^{4j+1}c^{4j+2}}r^{4j+1} \\
  &=-\frac{3r}{c^2}+\sum_{j=1}^{J_0-1}\frac{(4j+1)!}{(2j)!(2j-1)!2^{4j-1}c^{4j+2} }r^{4j-1}
  \Big( c^2 - \frac{(4j+3)}{4j}r^2 \Big) \le -\frac{3r}{c}
  \end{align*}
  On the interval $(0,c)$, $(\omega^{(c)}_J)'''$ is the $(J-2)$-th degree Taylor polynomial of $-3c^{-1}\frac{r}{c}(1+(\frac{r}{c})^2)^{-\frac52}$. Hence, for $0<r<c$, there exists $0<r_*<r$ such that
  \begin{align}\label{Taylor omega3 less than c}
    -3c^{-1}\frac{r}{c}\left(1+\left(\frac{r}{c}\right)^2\right)^{-\frac52}
  =(\omega^{(c)}_J)'''(r)
  +\frac{(4J_0+1)!}{(2J_0)!(2J_0-1)!2^{4J_0-1}c^{4J_0}}r_*^{4J_0-1},
  \end{align}
  which implies that 
  \begin{align*}
    (\omega^{(c)}_J)'''(r)
  \le  -3c^{-1}\frac{r}{c}\left(1+\left(\frac{r}{c}\right)^2\right)^{-\frac52}
  \le -3\cdot2^{-\frac52}\frac{r}{c^2}.
  \end{align*}

Secondly, a direct computation gives that 
  \begin{align*}
    (\omega^{(c)}_J)''(r)
    &=\sum_{j=1}^{J_0}\frac{(4j-3)!r^{4j-2}}{(2j-2)!(2j-2)!2^{4j-4}c^{4j-2}}
    \Big( \frac{c^2}{r^2}-\frac{4j-1}{4j-2}\Big). 
    \end{align*}
Since $(\omega^{(c)}_J)''$ is decreasing on $[0,\infty)$, we have 
$$  (\omega^{(c)}_J)''(r)>(\omega^{(c)}_J)''(c/2)> \tfrac58,  \ \text{ for } \ r<\tfrac c2.
$$
Also, since $ 1<\frac{4j-1}{4j-2}\le \frac32$ for all $j\in\N$, all terms in the summation are negative for $r>c$, which gives that 
\begin{align*} 
  (\omega^{(c)}_J)''(r)< - \frac{(2J-2)!}{(J-2)!(J-2)!2^{2J-5}}\left( \frac{r}{c}\right)^{2J-2},  \ \text{ for } \ r>c.
\end{align*}
We conclude that $(\omega^{(c)}_J)''$ has a unique zero $r_2\in(c/2,c)$.



Similarly, one can easily check that 
\begin{align*}
  (\omega^{(c)}_J)'(0)=0 \text{ and }   (\omega^{(c)}_J)(c)>0 \text{ and } (\omega^{(c)}_J)'(2c)<0.
\end{align*}
Then, since $(\omega^{(c)}_J)''(0)$ equals $1$ at $0$, is decreasing, and has the unique zero at $r_2$,
$(\omega^{(c)}_J)'$ is increasing on $(0,r_2)$ and decreasing on $(r_2,\infty)$. Thus, we conclude that $(\omega^{(c)}_J)'$ has a unique zero $r_1\in(c,2c)$.
Next, we compute that 
  \begin{align*} 
  r^{-1}(\omega^{(c)}_J)'(r)
  &=
  \sum_{j=1}^{J_0}\frac{(4j-4)!r^{4j-2}}{(2j-2)!(2j-2)!2^{4j-4}c^{4j-2}}
  \left(  \frac{c^2}{r^2} -\frac{4j-3}{4j-2}  \right).
 \end{align*}
Since $\frac12\le\frac{4j-3}{4j-2} <1$ for all $j\in\N$, 
if $r>2c$, all terms in the summation at the last line are negative, so we have 
\begin{align*} 
  r^{-1}(\omega^{(c)}_J)'(r)
 < - \frac{(2J-4)!}{(J-2)!(J-2)!2^{2J-5}} \left(\frac{r}{c} \right)^{2J-2} .
\end{align*}
On the other hand, for $r<c$, all terms are positive, so
\begin{align*} 
  r^{-1}(\omega^{(c)}_J)'(r)>   \left(\frac{r}{c} \right)^2 \left(  \frac{c^2}{r^2} -\frac{1}{2} \right) >\frac12.
\end{align*}
\color{black}
\end{proof}


  
 
  \color{black}
  We recall from Lemma~\ref{Lem:Hessian} that 
  \begin{equation*}
    \textup{det}\He(\Omega^{(c)}_J)(\xi)
    =(\omega^{(c)}_J)''(|\xi|)\big\{(\omega^{(c)}_J)'(|\xi|)|\xi|^{-1}\big\}^{d-1}.
  \end{equation*}
Thus, we check from the above lemma that the determinant of Hessian of $\Omega^{(c)}_J$ is zero if and only if $|\xi|=r_1$ or $r_2$.
Since $c/2<r_1,r_2<2c$, heuristically speaking, the phase function $\Omega^{(c)}_J$ for an even number $J$ could be degenerate provided that the norm of momentum is close to the speed of light.
Next, we characterize the rank of the Hessian matrix on each degenerate sphere.
  
  \begin{lemma}\label{Lem: even order rank}
    Let $J\in2\N$.
\begin{align}
  \textup{rank}\He(\Omega^{(c)}_J)(\xi) = \begin{cases}
    d-1 &\text{ for }  |\xi|=r_2, \\ 
    1 &\text{ for }  |\xi|=r_1.
  \end{cases}
\end{align}
    \end{lemma}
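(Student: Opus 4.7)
The plan is to read off the eigenstructure of $\He(\Omega_J^{(c)})$ directly from the decomposition established in Lemma~\ref{Lem:Hessian}. With $A=(\omega_J^{(c)})'(|\xi|)/|\xi|$ and $B=(\omega_J^{(c)})''(|\xi|)-(\omega_J^{(c)})'(|\xi|)/|\xi|$ as in \eqref{A and B}, the proof of that lemma shows
$$\He(\Omega_J^{(c)})(\xi)=A\,I+B\,\frac{\xi\xi^T}{|\xi|^2}.$$
Since $\xi\xi^T/|\xi|^2$ is the orthogonal projection onto $\R\xi$, the Hessian admits $\R\xi$ and $\xi^\perp$ as invariant subspaces, with eigenvalue $A+B=(\omega_J^{(c)})''(|\xi|)$ on the former and eigenvalue $A$ of multiplicity $d-1$ on the latter. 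The rank at $\xi$ is therefore precisely the number of nonzero eigenvalues counted with multiplicity among $A$ (multiplicity $d-1$) and $A+B$ (multiplicity $1$).

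For $|\xi|=r_1$: by the defining property of $r_1$ one has $(\omega_J^{(c)})'(r_1)=0$, hence $A=0$, so the $(d-1)$-dimensional eigenspace $\xi^\perp$ lies in the kernel. It remains to verify $A+B=(\omega_J^{(c)})''(r_1)\neq 0$; but $r_1\in(c,2c)\subset[c,\infty)$, so part (2) of Lemma~\ref{Lem:even order phase function} delivers $(\omega_J^{(c)})''(r_1)<0$. Thus the rank is exactly $1$.

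For $|\xi|=r_2$: the defining property $(\omega_J^{(c)})''(r_2)=0$ gives $A+B=0$, so the one-dimensional eigenspace $\R\xi$ is in the kernel. To conclude the rank equals $d-1$, I need $A\neq 0$; since $r_2\in(c/2,c)$, part (3) of Lemma~\ref{Lem:even order phase function} provides $r_2^{-1}(\omega_J^{(c)})'(r_2)>1/2$, whence $A>1/2$.

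The argument is essentially bookkeeping on top of the sign information already extracted in Lemma~\ref{Lem:even order phase function}, so there is no substantial obstacle. The only conceptual point worth flagging is the disjointness $r_2\in(c/2,c)$ and $r_1\in(c,2c)$, which guarantees that at each degenerate sphere exactly one of the two eigenvalues $A$ or $A+B$ vanishes, producing the two distinct rank values $d-1$ and $1$ asserted in the statement.
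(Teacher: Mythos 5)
Your proof is correct and follows essentially the same route as the paper's: both exploit the decomposition $\He(\Omega_J^{(c)})=A\,I+B\,\xi\xi^T/|\xi|^2$ from Lemma~\ref{Lem:Hessian}, observe that $\R\xi$ and $\xi^\perp$ are invariant with eigenvalues $A+B=(\omega_J^{(c)})''$ and $A=(\omega_J^{(c)})'/|\xi|$ respectively, and then invoke the sign information of Lemma~\ref{Lem:even order phase function} at $r_1$ and $r_2$. You are slightly more explicit than the paper in verifying that the non-vanishing eigenvalue is indeed nonzero (the paper leaves this implicit in its ``if and only if'' phrasing), which is a small but welcome tightening.
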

  \begin{proof}
  We show that $\He(\Omega^{(c)}_J)(\xi)=\textup{span}\{\xi\}$ for $|\xi|=r_1$ and $\He(\Omega^{(c)}_J)(\xi)=\textup{span}\{\xi^{\perp}\}$ for $|\xi|=r_2$, where $\xi^\perp$ denotes a vector orthogonal to $\xi$.
Given any nonzero $\eta\in\mathbb{R}^d$, we decompose $\eta=k\xi+\xi^\perp$ for some vector $\xi^\perp$ orthogonal to $\xi$ and $k\in\R$. Then, using the notations in \eqref{A and B}, we calculate 
$$\begin{aligned}
  \Big(\left[\He(\Omega^{(c)}_J)(\xi)\right]\eta\Big)_n&=\sum_{\ell=1}^d \left(A\delta_{n\ell}+B\frac{\xi_n\xi_\ell}{|\xi|^2}\right)\eta_\ell= A\eta_n+B\frac{\xi\cdot\eta}{|\xi|^2}\xi_n\\
  &=k(A+B)\xi_n+A(\xi^\perp)_n=k(\omega^{(c)}_J)''(|\xi|)\xi_n+(\omega^{(c)}_J)'(|\xi|)|\xi|^{-1} (\xi^\perp)_n.
  \end{aligned}$$
          Thus, on the smaller sphere $|\xi|=r_2$, $\left[\He(\Omega^{(c)}_J)(\xi)\right]\eta=0$ if and only if $\xi^\perp=0$ (or $\eta=k\xi$), whereas on the lager sphere $|\xi|=r_1$, $\left[\He(\Omega^{(c)}_J)(\xi)\right]\eta=0$ if and only if $k\xi=0$ (or $\eta=\xi^\perp$).
    \end{proof} 

From the view point of Section~2.2, the above lemma says that we can expect a time decay of solutions to \eqref{hLS} to be at most $t^{-\frac12}$ on the larger sphere $\{|\xi|=r_1\}$ and $t^{-\frac{d-1}{2}}$ on the smaller sphere $\{|\xi|=r_2\}$.
Also, we infer from  the proof of lemma that 
the Hessian of the phase function $\Omega_J^{(c)}$ restricted to radial (resp. perpendicular) direction is nondegenerate on larger (resp. smaller) sphere. 
First, let us verify this at a point on intersection of the sphere and axis along $\mathbf{e}_j$, the simplest case, where the differentiation with radial direction or perpendicular direction is easy to find
\begin{align*}
  \textup{det} \textbf{H}_{\{\mathbf{e}_j\}}(\Omega_J^{(c)})(r_1\mathbf{e}_j) = (\omega_J^{(c)})''(r_1),  \quad  
  \textup{det} \textbf{H}_{\{\mathbf{e}_1,\cdots,\mathbf{e}_{j-1},\mathbf{e}_{j+1},\cdots,\mathbf{e}_d\}}(\Omega_J^{(c)})(r_2\mathbf{e}_j) = r_2^{-1}(\omega_J^{(c)})'(r_2).
\end{align*}  
Then, we check from Lemma~\ref{Lem:even order phase function} that 
\begin{align*}
  \left |\textup{det} \textbf{H}_{\{\mathbf{e}_j\}}(\Omega_J^{(c)})(r_1\mathbf{e}_j)\right| \ge \frac12,  \quad  
\left|\textup{det} \textbf{H}_{\{\mathbf{e}_1,\cdots,\mathbf{e}_{j-1},\mathbf{e}_{j+1},\cdots,\mathbf{e}_d\}}(\Omega_J^{(c)})(r_2\mathbf{e}_j)\right| \ge \frac12,
\end{align*} 
where both lower bounds are independent of $c$.
In the next lemma, we prove that the determinants of the above submatrices of Hessian at points near the each degenerate sphere and around $\mathbf{e}_j$ direction also have the uniform-in-$c$ lower bound.
In addition, we prove that outside the degenerate spheres the determinant of Hessian matrix has the uniform-in-$c$ lower bound.
\begin{lemma}\label{Lem:Hessian lower bound even}
  Let $J\in 2\N$ and $\xi\in\R^d$.
  There exist $0<\delta\ll1$ independent of $c$ such that 
\begin{align}\label{delta}
  \min(c-r_2, r_1-c) \ge \delta.
\end{align}
Then, we have 
\begin{align}   \label{Hessian lower bound even}  
   \left| \textup{det}\He(\Omega_J^{(c)}) (\xi) \right| 
   \gs \left( 1+\tfrac{|\xi|}{c}\right)^{(2J-2)d}
   \text{ if } ||\xi|-r_1|> c\delta \text{ or } ||\xi|-r_2|> c\delta,
\end{align}
where the implicit constant is independent of $c$.
Furthermore, if $\xi\in\Theta_j$ for some $1\le j \le d$, one has 
\begin{align}
  \left|  \textup{det} \He_{\{\mathbf{e}_j\}}(\Omega_J^{(c)})(\xi)\right| &\ge \frac{1}{4d}|(\omega_J^{(c)})''(\xi)| \ge \frac{1}{8d},   &\text{ if }||\xi|-r_1|\le 2c\delta, \label{hessian r1}\\ 
  \left|\textup{det} \He_{\{\mathbf{e}_1,\cdots,\mathbf{e}_{j-1},\mathbf{e}_{j+1},\cdots,\mathbf{e}_d\}}(\Omega_J^{(c)})(\xi)\right| &\ge \frac{1}{4d} \Big( \frac{|(\omega_J^{(c)})'(\xi)|}{|\xi|}\Big)^{d-1} \ge \frac{1}{8d}, &\text{ if }||\xi|-r_2|\le 2c\delta.\label{hessian r2} 
\end{align}
\end{lemma}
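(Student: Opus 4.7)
The overall plan is a scaling reduction $\rho = |\xi|/c$. From \eqref{def omega} one reads $\omega_J^{(c)}(r) = c^2 p_J(\rho)$ for a polynomial $p_J$ depending only on $J$, so that $(\omega_J^{(c)})''(r) = p_J''(\rho)$ and $r^{-1}(\omega_J^{(c)})'(r) = \rho^{-1} p_J'(\rho)$ become polynomials in $\rho$ alone. By Lemma~\ref{Lem:even order phase function}, $p_J''$ and $\rho^{-1} p_J'$ have unique zeros $\rho_2 \in (1/2, 1)$ and $\rho_1 \in (1, 2)$, with $r_i = c \rho_i$. Setting $\delta := \tfrac12 \min\{\rho_1 - 1,\ 1 - \rho_2,\ (\rho_1 - \rho_2)/2\}$ (depending only on $J$) and using $c \ge 1$, we obtain $\min(c - r_2,\, r_1 - c) \ge \delta$, and the two shells $\{||\xi| - r_i| \le 2c\delta\}$, $i = 1, 2$, are disjoint.

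For \eqref{Hessian lower bound even}, use Lemma~\ref{Lem:Hessian} to write $\det\He(\Omega_J^{(c)})(\xi) = (\omega_J^{(c)})''(|\xi|)\{(\omega_J^{(c)})'(|\xi|)/|\xi|\}^{d-1}$, and split by the size of $|\xi|$. If $|\xi| \le c/2$, Lemma~\ref{Lem:even order phase function}(2)--(3) lower-bound each factor by $1/2$, while $(1 + |\xi|/c)^{(2J-2)d}$ is $O(1)$. If $|\xi| \ge 2c$, Lemma~\ref{Lem:even order phase function} gives each factor $\gtrsim (|\xi|/c)^{2J-2}$, producing the matching weight $(|\xi|/c)^{(2J-2)d}$. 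In the intermediate range $c/2 < |\xi| < 2c$ with $||\xi| - r_i| > c\delta$, the scaling reduces the claim to lower bounds for $|p_J''(\rho)|$ and $|\rho^{-1} p_J'(\rho)|$ over the compact set $\{\rho \in [1/2, 2] : |\rho - \rho_i| \ge \delta,\ i = 1, 2\}$, which excludes all of their zeros; continuity-compactness then yields a positive lower bound depending only on $J$ and $\delta$, and the weight is $O(1)$.

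For \eqref{hessian r1} and \eqref{hessian r2}, start from the rank-one decomposition $[\He\Omega_J^{(c)}(\xi)]_{mn} = A\delta_{mn} + B\xi_m\xi_n/|\xi|^2$ established in the proof of Lemma~\ref{Lem:Hessian}, where $A = (\omega_J^{(c)})'(|\xi|)/|\xi|$ and $A + B = (\omega_J^{(c)})''(|\xi|)$. Reading off the diagonal entry gives
\[
[\He\Omega_J^{(c)}(\xi)]_{jj} = A\bigl(1 - \xi_j^2/|\xi|^2\bigr) + (\omega_J^{(c)})''(|\xi|)\,\xi_j^2/|\xi|^2,
\]
and the matrix-determinant lemma applied to the submatrix $AI_{d-1} + (B/|\xi|^2)\check{\xi}_j\check{\xi}_j^T$ yields
\[
\det\He_{\{\mathbf{e}_\ell\,:\,\ell\neq j\}}(\Omega_J^{(c)})(\xi) = A^{d-2}\Bigl[A\,\xi_j^2/|\xi|^2 + (\omega_J^{(c)})''(|\xi|)\bigl(1 - \xi_j^2/|\xi|^2\bigr)\Bigr].
\]
For $\xi \in \Theta_j$ one has $1/(2d) \le \xi_j^2/|\xi|^2 \le 1$, and the rest becomes a perturbative argument in $\rho = |\xi|/c$. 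Near $r_1$: $A = \rho^{-1} p_J'(\rho)$ has a simple zero at $\rho_1$ with non-vanishing slope (since $(\omega_J^{(c)})''(r_1) \neq 0$ by Lemma~\ref{Lem:even order phase function}(2)), so $|A| \lesssim \delta$; meanwhile $|(\omega_J^{(c)})''(|\xi|)| = |p_J''(\rho)|$ stays within $O(\delta)$ of the fixed positive constant $|p_J''(\rho_1)|$. The $(\omega_J^{(c)})''$-term dominates in $[\He]_{jj}$ once $\delta$ is chosen small enough (depending on $J$ and $d$), yielding \eqref{hessian r1}. Near $r_2$ the roles swap: $(\omega_J^{(c)})''$ has a simple zero at $\rho_2$ with non-vanishing slope $p_J'''(\rho_2)$ by Lemma~\ref{Lem:even order phase function}(1), so $|(\omega_J^{(c)})''(|\xi|)| \lesssim \delta$, while $A$ stays close to the fixed constant $\rho_2^{-1} p_J'(\rho_2) > 1/2$ by Lemma~\ref{Lem:even order phase function}(3); the first bracketed term then dominates for $\delta$ small, giving \eqref{hessian r2}.

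The main technical obstacle is keeping every implicit constant independent of $c$. The scaling $\rho = |\xi|/c$ is the central device: it converts all questions about $\omega_J^{(c)}$ on shells of radius $\sim c$ into questions about the fixed polynomial $p_J$ near fixed points $\rho_1, \rho_2$, so the smallness conditions on $\delta$ depend only on $J$ and $d$, and the factors $(1+|\xi|/c)^{(2J-2)d}$ are picked up transparently from the high-frequency regime.
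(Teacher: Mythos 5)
Your proof is correct, and it takes a noticeably cleaner route than the paper's. The paper establishes \eqref{delta}, \eqref{Hessian lower bound even}, \eqref{hessian r1} and \eqref{hessian r2} by repeated applications of the Mean Value Theorem directly to $\omega_J^{(c)}$, pairing the explicit lower bound on $|(\omega_J^{(c)})'''|$ from Lemma~\ref{Lem:even order phase function}(1) with the location of the zeros $r_1, r_2$; this produces concrete $\delta$-dependent constants (e.g.\ $|(\omega_J^{(c)})''(r)| \ge 3\cdot 2^{-7/2}\delta$ when $|r-r_2|\ge c\delta$, $c/2\le r\le 2c$) but requires some case analysis about which side of $r_1$ or $r_2$ the intermediate MVT point falls on. Your scaling $\omega_J^{(c)}(r)=c^2 p_J(r/c)$ sidesteps all of this: it shows at once that $(\omega_J^{(c)})''$ and $r^{-1}(\omega_J^{(c)})'$, as functions of $\rho=r/c$, are $c$-independent polynomials, so the zeros sit at fixed $\rho_1,\rho_2$ and all the needed lower bounds on the compact shell $\rho\in[1/2,2]\setminus (B_\delta(\rho_1)\cup B_\delta(\rho_2))$ follow from continuity and compactness, while the high-frequency weight $(1+|\xi|/c)^{(2J-2)d}$ comes for free from Lemma~\ref{Lem:even order phase function}(2)--(3) in the ranges $|\xi|\le c/2$, $|\xi|\ge 2c$. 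The determinant formulas you use for the full Hessian and for the $(d-1)\times(d-1)$ minor agree with those the paper derives by Gaussian elimination (your matrix-determinant-lemma derivation is equivalent), and your perturbative treatment near $\rho_1,\rho_2$ — one of $A,\ (\omega_J^{(c)})''$ is $O(\delta)$ while the other stays bounded away from zero — matches the paper's "domination" argument in \eqref{hessian r1}--\eqref{hessian r2} but with the $c$-uniformity made transparent rather than tracked by hand. Two small housekeeping points: note that the condition in \eqref{Hessian lower bound even} should be read as the conjunction $||\xi|-r_1|>c\delta$ \emph{and} $||\xi|-r_2|>c\delta$ (the "or" in the statement is a slip; your proof uses the conjunction, which is what is applied later in $\mathcal K_3$); and your final $\delta$ must be taken small enough simultaneously for the shell separation, for the compactness bound, and for the two domination steps — all of these depend only on $J$ and $d$, so the conclusion is unaffected, but it is worth saying explicitly that $\delta$ is fixed once at the end.
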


\begin{proof}
  \textit{(1). Proof of \eqref{delta}.}
  By applying Mean value theorem to $(\omega_J^{(c)})''$, we have 
  \begin{align*} 
    (\omega_J^{(c)})''(c)= (\omega_J^{(c)})''(c)- (\omega_J^{(c)})''(r_2)= (\omega_J^{(c)})'''(r_*)(c-r_2) 
    \text{ for some } r_*\in (r_2,c).
  \end{align*}
Then, from \eqref{Taylor omega3 less than c},  
  \begin{align*} 
  \frac12 \le | (\omega_J^{(c)})''(c)| =| (\omega_J^{(c)})'''(r_*)||c-r_2| 
  \le 3\cdot 2^{-\frac52}c^{-1}|c-r_2|,
  \end{align*}
  which gives that 
  \begin{align*}
   c-r_2 \ge  3^{-1}2^{\frac32}c.
  \end{align*}
  Similarly, 
  we have 
  \begin{align*} 
    \frac{c}{2}\le | (\omega_J^{(c)})'(c)|= | (\omega_J^{(c)})''(r_\star)||r_1-c| 
    \le | (\omega_J^{(c)})''(2c)||r_1-c|,
  \end{align*}
  for some $r_\star\in (c,r_1)$, which gives that 
\begin{align*} 
  r_1-c \ge \tfrac{1}{2}| (\omega_J^{(c)})''(2c)|^{-1}c.
 \end{align*}
Here, we observe that $| (\omega_J^{(c)})''(2c)|$ is independent of $c$.

\textit{(2). Proof of \eqref{Hessian lower bound even}}. 
Recall from \eqref{radial Hessian} that 
\begin{align*}
  \textup{det}\He (\Omega_J^{(c)})(\xi)
  = (\omega_J^{(c)})''(|\xi|)\big\{ (\omega_J^{(c)})'(|\xi|)|\xi|^{-1}\big\}^{d-1},
\end{align*}
so, it can be easily shown from Lemma~\ref{Lem:even order phase function} that 
\begin{align}\label{det lowbound}
  |\textup{det}\He (\Omega_J^{(c)})(\xi)| \gs\left( 1+\tfrac{|\xi|}{c}\right)^{(2J-2)d} \text{ for } |\xi|\le \tfrac c2 \text{ or } |\xi|\ge 2c.
\end{align}
Next, consider $\frac{c}{2}\le |\xi| \le 2c$.
If $r\in(\frac c2,2c)$ and $|r-r_2|\ge c\delta$, by Mean value theorem, we have
\begin{align*} 
 |(\omega_J^{(c)})''(r)|&=|(\omega_J^{(c)})''(r_2)+(\omega_J^{(c)})'''(r_*) (r-r_2)| \text{ for some } r_* \text{ between } r \text{ and }r_2 \\
 &=|(\omega_J^{(c)})'''(r_*)| |r-r_2| \ge 3\cdot2^{-\frac72}\delta,
\end{align*}
where we used (1) in Lemma~\ref{Lem:even order phase function}.
Also, if $r\in(\frac c2,2c)$ and $|r-r_1|\ge c\delta$,
\begin{align*} 
 |(\omega_J^{(c)})'(r)| &=|(\omega_J^{(c)})'(r_1) + (\omega_J^{(c)})''(r_\star )(r-r_1)| \text{ for some } r_\star  \text{ between } r \text{ and }r_1 \\ 
 &=|(\omega_J^{(c)})''(r_{\star} )||r-r_1| 
 \ge \frac12 c\delta,
\end{align*}
which gives that 
\begin{align*} 
 |r^{-1}(\omega_J^{(c)})'(r)| \ge \tfrac{1}{4}\delta.
\end{align*}
By applying two inequalities to the above  formula for determinant with $|\xi|=r$, we obtain 
for $\frac{c}{2}\le |\xi| \le 2c$ with $||\xi|-r_1|\ge c\delta$ or $||\xi|-r_2|\ge c\delta$ that 
\begin{align*} 
  |\textup{det}\He (\Omega_J^{(c)})(\xi)|\ge 3\cdot2^{-\frac72-2(d-1)}\delta^d. 
\end{align*}



  \textit{(3). Proof of \eqref{hessian r1}}. 
A direct computation gives that  
\begin{align*} 
 \partial_{\mathbf{e}_j}^2 \Omega_J^{(c)}(\xi) 
  = (\omega_J^{(c)})''(|\xi|)\frac{\xi_j^2}{|\xi|^2} 
  + (\omega_J^{(c)})'(|\xi|)|\xi|^{-1} \frac{|\check{\xi}_j|^2}{|\xi|^2}.
\end{align*}  
We claim that the latter term on the right-hand side is dominated by the former if $\delta$ is sufficiently small. Indeed, by Mean value theorem, we have 
\begin{align*} 
  (\omega_J^{(c)})'(r)=(\omega_J^{(c)})'(r)-(\omega_J^{(c)})'(r_1)=(\omega_J^{(c)})''(r_*)(r-r_1), \text{ for some } r_1< r_*<r.
 \end{align*}
 Since $r_1\in(c,2c)$ and $(\omega_J^{(c)})''$ is decreasing and negative on $(c,\infty)$ we have 
 $|(\omega_J^{(c)})''(r_*)|< |(\omega_J^{(c)})''(r)|$, which gives 
 $|(\omega_J^{(c)})'(r)| \le |r-r_1||(\omega_J^{(c)})''(r)|$.
 Then, for $\xi\in\Theta_j$ and $||\xi|-r_1|\le c\delta$, we have 
 \begin{align*}
  \left| (\omega_J^{(c)})'(|\xi|)|\xi|^{-1} \frac{|\check{\xi}_j|^2}{|\xi|^2}\right|
  \le \frac{2d}{2d-1}|(\omega_J^{(c)})''(|\xi|)|\frac{||\xi|-r_1|}{|\xi|} 
  \le \frac{4d}{2d-1}\delta |(\omega_J^{(c)})''(|\xi|)|.
 \end{align*}
Since $\left|(\omega_J^{(c)})''(|\xi|)\frac{\xi_j^2}{|\xi|^2}\right| \ge \frac{1}{2d} |(\omega_J^{(c)})''(|\xi|)| $, by taking $\delta$ sufficiently small, we can show \eqref{hessian r1}.

    \textit{(4). Proof of \eqref{hessian r2}}. 
Using the notations in \eqref{A and B}, we compute  
   \begin{align}\begin{aligned}\label{GE}
    \He_{\{\mathbf{e}_1,\cdots,\mathbf{e}_{j-1},\mathbf{e}_{j+1},\cdots,\mathbf{e}_d\}}(\Omega_J^{(c)})(\xi) 
     &=\begin{bmatrix}
    A\mathbf{e}_1+(B\frac{\xi_1}{|\xi|^2})\check\xi_j\\
    \vdots\\
    A\mathbf{e}_{i-1}+(B\frac{\xi_{j-1}}{|\xi|^2})\check\xi_j\\
    A\mathbf{e}_{i+1}+(B\frac{\xi_{j+1}}{|\xi|^2})\check\xi_j\\
    \vdots\\
    A\mathbf{e}_{d}+(B\frac{\xi_d}{|\xi|^2})\check\xi_j
  \end{bmatrix}.
     \end{aligned}\end{align} 
By performing Gaussian elimination as in Lemma~\ref{Lem:Hessian}, we obtain    
\begin{align*}
  \textup{det}\He_{\{\mathbf{e}_1,\cdots,\mathbf{e}_{j-1},\mathbf{e}_{j+1},\cdots,\mathbf{e}_d\}}(\Omega_J^{(c)})(\xi) =
   \left( \frac{(\omega_J^{(c)})'(|\xi|)}{|\xi|} \right)^{d-2}
\left\{ 
  \left(\frac{(\omega_J^{(c)})'(|\xi|)}{|\xi|}\right)\frac{\xi_j^2}{|\xi|^2}+(\omega_J^{(c)})''(|\xi|)\frac{|\check\xi_j|^2}{|\xi|^2}\right\}.
\end{align*}
We show that the first term of the right-hand side is dominant. By Mean value theorem and \eqref{bound for 3 omega} we have for $r\in(c/2,c)$, 
  \begin{align*} 
   |(\omega_J^{(c)})''(r)|&=|(\omega_J^{(c)})'''(r_*)||r-r_2| \text{ for some } r_* \text { between } r \text{ and } r_2 \\ 
   &\le 3\cdot 2^{-\frac52}c^{-2} r_*|r-r_2| 
   \le 3\cdot 2^{-\frac32}\delta.
  \end{align*}
from which we estimate the second term of the right-hand side that for $\xi\in\Theta_i$ 
\begin{align*}
  \left| (\omega_J^{(c)})''(|\xi|)\frac{|\check\xi_j|^2}{|\xi|^2}\right|
  \le \frac{2d}{2d-1} |(\omega_J^{(c)})''(|\xi|)| \le \frac{2d}{2d-1} 3\cdot 2^{-\frac32}\delta.
\end{align*} 
On the other hand,   for $\xi\in \Theta_j$ and  $|\xi|\le c$
\begin{align*}
  \left|   \left( \frac{(\omega_J^{(c)})'(|\xi|)}{|\xi|} \right) \frac{\xi_j^2}{|\xi|^2} \right|
  \ge  \frac{1}{2d} \left|   \frac{(\omega_J^{(c)})'(|\xi|)}{|\xi|}  \right| 
  \ge  \frac{1}{4d} \left|   \frac{(\omega_J^{(c)})'(|\xi|)}{|\xi|}  \right| + \frac{1}{8d}.
\end{align*}
Thus, by making $\delta$ sufficiently small if necessary, we can obtain \eqref{hessian r2}.
\end{proof}

\subsection{Dispersive estimates}

Using the projection operator \eqref{projection operator}, we write the linear solution to \eqref{hLS} as
\begin{align*}
  e^{-it\mathcal{H}_J^{(c)}}\psi_0(x) 
  &=\sum_{N\in \Z} 
  e^{-it\mathcal{H}_J^{(c)}}P_N\psi_0(x) 
  =\sum_{N\in\Z}
  \mathcal{I}_{J,N}^{(c)}\left(t,\tfrac{\cdot}{t}\right)\ast \psi_0(x),
\end{align*}
where the kernel is given by 
\begin{align}\label{frequency localized kernel}
  \mathcal{I}_{J,N}^{(c)}(t,v)  =\frac{1}{(2\pi)^d}\int_{\R^d} 
  e^{it(v\cdot\xi-\Omega_J^{(c)}(\xi))} \chi_N(\xi)d\xi.
\end{align}
We investigate the time decay of $\mathcal{I}_{J,N}^{(c)}(t,v)$ by using the results in the previous subsection.

\subsubsection{Multi dimensional case $d\ge2$}\label{subsection:Dispersive estimates}
\begin{proposition}\label{Prop:even dispersive}
  Let $J\in 2\N$ and $N\in \Z$.
For $d\ge2$, we have 
\begin{align} \label{even dispersive estimates multi d}
    \sup_{v\in\R^d}\left| \mathcal{I}_{J,N}^{(c)}(t,v)\right|
    \ls   
    \begin{cases}
      t^{-\frac d2} &\text{ for } \ 2^N < c/4 \text{ or } 2^N>4c, \\ 
    c^{d-1}t^{-\frac12}  &\text{ for } \ c/4 \le 2^N \le 4c,    
    \end{cases}
\end{align}
where the implicit constants are independent of $c\ge1$.
\end{proposition}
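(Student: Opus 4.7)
The plan is to prove \eqref{even dispersive estimates multi d} by rescaling the kernel so that the phase becomes $c$-independent, then splitting the frequency support into a non-degenerate bulk and two thin annuli around the degenerate spheres, and applying Lemma~\ref{Lem:Stationary phase method} with the Hessian bounds supplied by Lemma~\ref{Lem:Hessian lower bound even}.

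For the non-degenerate windows $2^N<c/4$ and $2^N>4c$, I rescale $\xi=2^N\eta$ and proceed exactly as in the odd case (Proposition~\ref{odd hessian}): the lower bound on the full Hessian from \eqref{Hessian lower bound even}, together with the derivative estimates analogous to \eqref{eq:prop3.1-2}, feeds into Lemma~\ref{Lem:Stationary phase method} and yields the desired $t^{-d/2}$ decay uniformly in $c$.

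The substantive case is the degenerate window $c/4\le 2^N\le 4c$, which I handle by rescaling $\xi=c\eta$. The crucial observation is that
\[\tilde\Omega_J(\eta):=c^{-2}\Omega_J^{(c)}(c\eta)=\sum_{j=1}^J\tfrac{(-1)^{j+1}(2j-2)!}{(j-1)!j!2^{2j-1}}|\eta|^{2j}\]
is \emph{independent} of $c$, and hence its degenerate radii $\tilde r_1:=r_1/c\in(1,2)$ and $\tilde r_2:=r_2/c\in(1/2,1)$ are $c$-independent constants; the rescaled phase $tc^2(c^{-1}v\cdot\eta-\tilde\Omega_J(\eta))$ then has every higher-derivative bound uniform in $c$. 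With a smooth radial cutoff I decompose the bounded $\eta$-annulus into three pieces: the bulk at distance at least $\delta$ from both spheres $|\eta|=\tilde r_j$, and a $\delta$-neighborhood of each sphere. On the bulk the full Hessian is nondegenerate by \eqref{Hessian lower bound even}, so Lemma~\ref{Lem:Stationary phase method} at scale $\lambda=tc^2$ combined with the prefactor $c^d$ already gives the stronger bound $t^{-d/2}$.

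The pieces near the degenerate spheres are the heart of the argument and require the angular partition $\sum_j\theta^j\equiv 1$ from \eqref{partition of unity}. On the part of the near-$\tilde r_1$ neighborhood supported in $\Theta_j$, estimate \eqref{hessian r1} gives a $c$-uniform lower bound on $\partial_{\mathbf e_j}^2\tilde\Omega_J$, so Van der Corput (Lemma~\ref{Lem:Van} with $k=2$) in $\eta_j$ yields $(tc^2)^{-1/2}$, while integration over the bounded transverse variables contributes only $O(1)$; multiplication by $c^d$ gives exactly $c^{d-1}t^{-1/2}$. On the part of the near-$\tilde r_2$ neighborhood supported in $\Theta_j$, estimate \eqref{hessian r2} provides a $c$-uniform lower bound on the $(d-1)\times(d-1)$ sub-Hessian in the directions $\{\mathbf e_i\}_{i\ne j}$, so Lemma~\ref{Lem:Stationary phase method} applied in those variables gives $(tc^2)^{-(d-1)/2}$ and a total of $c\,t^{-(d-1)/2}$. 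The main obstacle is that this last bound does not literally match the target $c^{d-1}t^{-1/2}$ when $d\ge 3$; I resolve it by comparing the two regimes of $t$. For $t\ge c^{-2}$ a direct computation shows $c\,t^{-(d-1)/2}\le c^{d-1}t^{-1/2}$, while for $t<c^{-2}$ the desired bound is dominated by the trivial $L^1$ estimate $|\mathcal{I}_{J,N}^{(c)}(t,v)|\le(2\pi)^{-d}\|\chi_N\|_{L^1}\ls c^d\le c^{d-1}t^{-1/2}$, closing the argument.
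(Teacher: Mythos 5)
Your proposal is correct and takes essentially the same route as the paper: the same three-way frequency decomposition (bulk away from both degenerate spheres, $\delta$-neighborhoods of $|\xi|=r_1$ and $|\xi|=r_2$), the same angular partition $\{\theta^j\}$, the same use of \eqref{hessian r1} with Van der Corput in the radial-like direction near $r_1$, and the same use of \eqref{hessian r2} with the stationary phase lemma in the $(d-1)$ transverse directions near $r_2$. The only cosmetic differences are that you rescale by $c$ (so the phase becomes $c$-independent) where the paper rescales by $2^N\sim c$, and that you split into $t\gtrless c^{-2}$ where the paper interpolates the bound $c\,t^{-(d-1)/2}$ with the trivial $L^1$ bound $c^d$ — these are the same reconciliation written two ways.
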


\begin{remark}
We do not claim the sharpness of time decay when $c/4 \le 2^N \le 4c$.
\end{remark}

\begin{proof}
For $2^N < c/4 \text{ or } 2^N>4c$ when the degenerate points are excluded, $\textup{det}\He(\Omega_J^{(c)})$ has the same lower bound \eqref{Hessian lower bound even} as in the odd case (see Proposition~\ref{odd hessian}), so by applying Lemma~\ref{Lem:Stationary phase method} we can show that 
\begin{align*}
  | \mathcal{I}_{J,N}^{(c)}(t,v)|\ls t^{-\frac d2}.
\end{align*}

Next, we consider the case when $c/4\le 2^N\le 4c$.
Let us decompose the support of integral $\mathcal{I}_{J,N}^{(c)}$ into 
\begin{align*} 
  \mathcal{I}_{J,N}^{(c)} &= \mathcal{K}_1+\mathcal{K}_2+\mathcal{K}_3, \\
  \mathcal{K}_1&=  \frac{1}{(2\pi)^d}
\int_{\R^d}  e^{it\big(v\cdot\xi-\Omega_J^{(c)}(\xi)\big)}\chi\left(\tfrac{|\xi|-r_1}{c\delta}\right)\chi_N(\xi) d\xi, \\
\mathcal{K}_2&=\frac{1}{(2\pi)^d}
\int_{\R^d}  e^{it\big(v\cdot\xi-\Omega_J^{(c)}(\xi)\big)}
\left( 1- \chi\left(\tfrac{|\xi|-r_1}{c\delta}\right) \right)
\chi\left(\tfrac{|\xi|-r_2}{c\delta}\right)\chi_N(\xi) d\xi, \\
\mathcal{K}_3&=\frac{1}{(2\pi)^d}
\int_{\R^d}  e^{it\big(v\cdot\xi-\Omega_J^{(c)}(\xi)\big)}
\left(1-\chi\left(\tfrac{|\xi|-r_1}{c\delta}\right) \right) 
\left(1-\chi\left(\tfrac{|\xi|-r_2}{c\delta}\right) \right) 
\chi_N(\xi) d\xi,
\end{align*}
for $\delta\ll1$ chosen in the Lemma~\ref{Lem:Hessian lower bound even}. 
We observe that  $\mathcal{K}_1$ is supported near the larger sphere $\{|\xi|=r_1\}$, $\mathcal{K}_2$ is near the smaller sphere $\{|\xi|=r_2\}$ and $\mathcal{K}_3$ is outside two spheres.
Since $\delta$ in Lemma~\ref{Lem:Hessian lower bound even} was chosen independent of $c$, even though all constants in the estimates below might be dependent on $\delta$, we do not write them explicitly and focus on $c$.

$\mathcal{K}_3$ can be estimated similarly as the above case when $2^N<\frac{c}{4}$ or $2^N>4c$ thanks to \eqref{Hessian lower bound even} 
\begin{align*} 
  |\mathcal{K}_3| \ls t^{-\frac d2}.
\end{align*}
Interpolating this with the trivial bound $|\mathcal{K}_3| \ls c^d$, we can obtain the desired result.

Now, we consider $\mathcal{K}_1$ and $\mathcal{K}_2$ the supports of which contain the degenerate points.
Applying the partition of unity \eqref{partition of unity}, we write 
$ \mathcal{K}_j=\sum_{\ell=1}^d  \mathcal{K}_j^\ell$ for $j=1,2$ where 
\begin{align*} 
  \mathcal{K}_1^\ell &=  \frac{1}{(2\pi)^d}
 \int_{\R^d}  e^{it\big(v\cdot\xi-\Omega_J^{(c)}(\xi)\big)}\chi\left(\tfrac{|\xi|-r_1}{c\delta}\right)\chi_N(\xi)\theta^\ell(\xi) d\xi, \\ 
 \mathcal{K}_2^\ell &=  \frac{1}{(2\pi)^d}
 \int_{\R^d}  e^{it\big(v\cdot\xi-\Omega_J^{(c)}(\xi)\big)}\left( 1- \chi\left(\tfrac{|\xi|-r_1}{c\delta}\right) \right)
 \chi\left(\tfrac{|\xi|-r_2}{c\delta}\right)\chi_N(\xi) \theta^\ell(\xi) d\xi.
\end{align*}
For $ \mathcal{K}_2^\ell$, we change variables as
\begin{align*}
  \mathcal{K}_2^\ell &=  \frac{1}{(2\pi)^d}
  \int_{\R^d}  e^{it\big(v\cdot\xi-\Omega_J^{(c)}(\xi)\big)}\left( 1- \chi\left(\tfrac{|\xi|-r_1}{c\delta}\right) \right)
  \chi\left(\tfrac{|\xi|-r_2}{c\delta}\right)\chi_N(\xi) \theta^\ell(\xi) d\xi    \\ 
  &= \frac{2^{Nd}}{(2\pi)^d}
  \int_{\R^d}  e^{i2^{2N}t\big(2^{-N}v\cdot\xi-2^{-2N}\Omega_J^{(c)}(2^N\xi)\big)}\left( 1- \chi\left(\tfrac{|2^N\xi|-r_1}{c\delta}\right) \right)
  \chi\left(\tfrac{|2^N\xi|-r_2}{c\delta}\right)\chi_0(\xi) \theta^\ell(\xi) d\xi. 
\end{align*}
We claim that the integral with $\check{\xi_\ell}$ variables is bounded as follows:
\begin{align}
  \begin{aligned}\label{int xi check}
  & \left| \int_{\R^{d-1}} e^{i2^{2N}t\big(2^{-N}v\cdot\xi-2^{-2N}\Omega_J^{(c)}(2^N\xi)\big)}\left( 1- \chi\left(\tfrac{|2^N\xi|-r_1}{c\delta}\right) \right)
  \chi\left(\tfrac{|2^N\xi|-r_2}{c\delta}\right)\chi_0(\xi) \theta^\ell(\xi) d\check{\xi_\ell} \right| \\ 
  &\hspace{28.5em}\ls (2^{2N}t)^{-\frac{d-1}{2}},
  \end{aligned}\end{align}
where the implicit constant is independent of $c$.  
For $c/4\le 2^N\le 4c$, we have from \eqref{hessian r2}
\begin{align*}
  \left|  \textup{det} \He_{\check{\xi}_\ell}\left[ (2^{-2N}\Omega_J^{(c)})(2^N\xi) \right] \right| \gs 1, 
\end{align*}
and we can show by direct computations that 
\begin{align*}
& \sup_{2\le|\alpha|\le d+1}  \Big\{
    C_\alpha : \sup_{\xi \in \textup{supp} \chi}\big|{\partial_{\check{\xi}_\ell}}^\alpha
  \big\{ (2^{-N}v\cdot\xi-2^{-2N}\Omega_J^{(c)}(2^N\xi)) \big\} \big|\le C_\alpha, \Big\}
  \ls 1, \\ 
 & \sup_{|\alpha|\le d}  \Big\{
    C_\alpha : \sup_{\xi }\left|
    {\partial_{\check{\xi}_\ell}}^\alpha   \left\{ 
    \left( 1- \chi\left(\tfrac{|2^N\xi|-r_1}{c\delta}\right) \right)
    \chi\left(\tfrac{|2^N\xi|-r_2}{c\delta}\right)\chi_0(\xi) \theta^\ell(\xi) 
    \right\} \right|
    \le C_\alpha, \Big\}
  \ls 1,
\end{align*}
where  the implicit constants are independent of $v\in\R^d$, $N$, and $c$. Then, \eqref{int xi check} can be derived by applying Lemma~\ref{Lem:Stationary phase method} with the help of above estimates.Since $\xi_\ell$ belongs to the interval of length at most $\frac{c}{2^N}$, we conclude that
\begin{align*}
  \mathcal{K}_2^\ell &\ls  2^{Nd} \int_{\R} \left| \int_{\R^{d-1}} e^{i2^{2N}t\big(2^{-N}v\cdot\xi-2^{-2N}\Omega_J^{(c)}(2^N\xi)\big)}\left( 1- \chi\left(\tfrac{|2^N\xi|-r_1}{c\delta}\right) \right)
  \chi\left(\tfrac{|2^N\xi|-r_2}{c\delta}\right)\chi_1(\xi) \theta^\ell(\xi) d\check{\xi_\ell} \right|
  d\xi_\ell  \\ 
  &\ls 2^{Nd} \cdot (2^{2N}t)^{-\frac{d-1}{2}} \cdot \frac{c}{2^N}
  \ls t^{-\frac {d-1}{2}}c.
\end{align*}
Finally, by interpolating this with the trivial bound $  |\mathcal{K}_2^\ell| \ls c^d$, we obtain the desired result.

For $\mathcal{K}_1^\ell$, the strategy is similar. We first use Lemma \ref{Lem:Stationary phase method} in terms of $\xi_{\ell}$, and measure the set of $\check{\xi_\ell}$ variables.
To be specific, since 
$$\xi\in\Theta_\ell \text{ and } ||\xi|-r_1|\le c\delta \text{ on the support of integral } \mathcal{K}_1^\ell,$$ 
we have the uniform-in-$c$ lower bound of the Hessian in terms of $\xi_\ell$ variable, i.e., \eqref{hessian r1}, which implies by Lemma~\ref{Lem:Van} that  

\begin{align*}
  \left| \int_{\R}  e^{it\big(v\cdot\xi-\Omega_J^{(c)}(\xi)\big)}\chi\left(\tfrac{|\xi|-r_1}{c\delta}\right)\chi_N(\xi)\theta^\ell (\xi) d\xi_\ell \right|    
  \ls t^{-\frac12},
\end{align*}
where the implicit constant is independent of $c$. Since $\check{\xi}_\ell$ belongs to the $(d-1)$ dimensional sphere of volume at most $c^{d-1}$,we conclude that
\begin{align*}
  |\mathcal{K}_1^\ell| &\ls 
  \int_{\R^{d-1}} \left| \int_{\R}  e^{it\big(v\cdot\xi-\Omega_J^{(c)}(\xi)\big)}\chi\left(\tfrac{|\xi|-r_1}{c\delta}\right)\chi_N(\xi)\theta^\ell(\xi) d\xi_\ell \right| d\check{\xi_\ell}  \\ 
  &\ls t^{-\frac12}c^{d-1}.
\end{align*} 

\color{black}
\end{proof}

\subsubsection{One dimensional case $d=1$}
\begin{proposition}\label{Prop:dispersive 1d}
  Let $J\in 2\N$ and $N\in \Z$. For $d=1$, we have 
  \begin{align} \label{odd dispersive estimates 1d}
    \sup_{v\in\R}\left| \mathcal{I}_{J,N}^{(c)}(t,v)\right| \ls   
    \begin{cases} 
      \  t^{-\frac 12} &\text{ for }  2^N<c/4 \text{ or } 2^N>4c,\\
      \  c^{\frac13}t^{-\frac13}   &\text{ for }   c/4\le 2^N \le 2c,
      \end{cases}
    \end{align}
  where the implicit constants are independent of $c>1$.
\end{proposition}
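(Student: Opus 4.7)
The plan is to apply Lemma \ref{Lem:Van} (Van der Corput) directly to the phase
$\phi(\xi) := v\xi - \Omega_J^{(c)}(\xi)$, whose second derivative satisfies $\phi''(\xi) = -(\omega_J^{(c)})''(|\xi|)$. A key simplification relative to the analysis for $d\ge 2$ is that in one dimension \emph{only} the zero $r_2\in(c/2,c)$ of $(\omega_J^{(c)})''$ can cause a degeneracy: the zero $r_1$ of $(\omega_J^{(c)})'$, which produces a degenerate sphere in higher dimensions, enters the determinant of Hessian (see Lemma \ref{Lem:Hessian}) only through the factor $\bigl((\omega_J^{(c)})'(|\xi|)|\xi|^{-1}\bigr)^{d-1}$, which is trivial when $d=1$. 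Hence the 1D analysis only needs to handle a single critical radius.

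For the outer regime $2^N < c/4$ or $2^N > 4c$, I change variables $\xi = 2^N\eta$ so that the amplitude becomes a bump supported on $|\eta|\sim 1$ with total variation uniform in $c$ and $N$, and consider the rescaled phase $\Psi(\eta) := 2^{-2N}\phi(2^N\eta)$. Its second derivative satisfies $|\Psi''(\eta)| = |(\omega_J^{(c)})''(2^N|\eta|)|$, which by Lemma \ref{Lem:even order phase function}(2) is bounded below uniformly by $5/8$ when $2^N < c/4$ (since then $2^N|\eta|<c/2$), and by $A(2^N/c)^{2J-2}$ when $2^N > 4c$. Applying Van der Corput with $k=2$ and undoing the rescaling yields
\[
|\mathcal{I}_{J,N}^{(c)}(t,v)| \lesssim 2^N \bigl(t\cdot 2^{2N}\cdot \beta\bigr)^{-1/2},
\]
with $\beta$ the corresponding lower bound. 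This simplifies to $t^{-1/2}$ when $2^N<c/4$, and to $t^{-1/2}(c/2^N)^{J-1}\le t^{-1/2}$ when $2^N>4c$. (For $t\le 2^{-2N}$, the trivial bound $|\mathcal{I}_{J,N}^{(c)}|\lesssim 2^N$ already dominates $t^{-1/2}$.)

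For the middle regime $c/4 \le 2^N \le 2c$, where $|\xi| = r_2$ can lie inside the support of $\chi_N$ and $\phi''$ may vanish, one might anticipate an intricate decomposition into regions near and far from $r_2$ with an interpolation between second- and third-derivative Van der Corput bounds. However, the uniform third-derivative bound of Lemma \ref{Lem:even order phase function}(1) lets one avoid this entirely. Rescaling $\xi=c\eta$, the phase
\[
\Phi(\eta) := \frac{v}{c}\eta - c^{-2}\Omega_J^{(c)}(c\eta)
\]
satisfies $|\Phi'''(\eta)| = c\,|(\omega_J^{(c)})'''(c|\eta|)| \ge 3\cdot 2^{-5/2}|\eta|$, which on the support $\eta\in[2^N/c,\,2^{N+1}/c]\subset[1/4,4]$ gives $|\Phi'''(\eta)|\gtrsim 1$ with a constant independent of $c$. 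Van der Corput with $k=3$ applied to $\int e^{i(tc^2)\Phi(\eta)}\chi_N(c\eta)\,d\eta$ then produces $(tc^2)^{-1/3}$, and undoing the rescaling gives $|\mathcal{I}_{J,N}^{(c)}| \lesssim c\cdot(tc^2)^{-1/3}=c^{1/3}t^{-1/3}$. The trivial bound $|\mathcal{I}_{J,N}^{(c)}|\lesssim c$ covers the short-time regime $tc^2\lesssim 1$, where it already satisfies $c\lesssim c^{1/3}t^{-1/3}$.

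The main potential obstacle is ensuring that the Van der Corput constants and the total variations of the rescaled amplitudes remain independent of $c$ and $N$. After the two rescalings $\xi=2^N\eta$ and $\xi=c\eta$, this reduces to the elementary fact that the fixed smooth cutoff $\chi$ has $O(1)$ total variation, so no substantive technical difficulty remains beyond careful invocation of Lemma \ref{Lem:even order phase function}. In particular, unlike the multidimensional setting of Proposition \ref{Prop:even dispersive}, no partition of unity into coordinate-axis sectors is required here.
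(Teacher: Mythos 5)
Your proposal is correct and takes essentially the same route as the paper: apply Van der Corput with $k=2$ on dyadic pieces where Lemma~\ref{Lem:even order phase function}(2) gives a uniform lower bound on $(\omega_J^{(c)})''$, and $k=3$ on the middle pieces using the third-derivative bound from Lemma~\ref{Lem:even order phase function}(1). The explicit rescalings $\xi=2^N\eta$ and $\xi=c\eta$ are cosmetic — the paper invokes Lemma~\ref{Lem:Van} directly on the unrescaled integral — but they lead to the identical estimates.
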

\begin{proof}
 We observe that one of the second or third derivative 
 of the phase function 
 has a lower bound over each dyadic pieces. 
  More precisely, 
  we have from Lemma~\ref{Lem:even order phase function} that 
  \begin{align*}
  |(\omega^{(c)}_J)''(r)|\ge b \text{ for } r\in \textup{supp}\chi_N,  \ 2^N<c/4 \text{ or } 2^N>4c.
  \end{align*}
which implies by Lemma~\ref{Lem:Van} that 
  \begin{align*}
  | \mathcal{I}_{J,N}^{(c)}(t,v)| \
   \ls  t^{-\frac 12} \text{ for } \
   2^N<c/4 \text{ or } 2^N>4c,
  \end{align*}
  where the implicit constant is independent of $c$.
  Next, we have from \eqref{bound for 3 omega} in Lemma~\ref{Lem:even order phase function} that
  \begin{align*}
  |(\omega^{(c)}_J)'''(r)|\ge 3c^{-1}\cdot2^{-\frac92}, \text{ for } r\in \textup{supp}\chi_{N}, \ c/4\le 2^N \le 2c
  \end{align*}
  which implies again by Lemma~\ref{Lem:Van} that
  \begin{align*}
    | \mathcal{I}_{J,N}^{(c)}(t,v)| \ls  c^\frac13t^{-\frac 13} \text{ for } \ c/4\le 2^N \le 2c .
  \end{align*}
\end{proof}

\subsection{Proof of Theorem~\ref{Thm:Strichartz even hLS}}
We only consider multi dimensional case $d\ge2$. A slight modification would give the proof for one dimensional case. From \eqref{even dispersive estimates multi d},
we can obtain, by applying the $TT^*$ argument in Keel-Tao \cite{KT1998}, the frequency localized Strichartz estimates are given by
  \begin{align*}
  \Big\| e^{-it\mathcal{H}_J^{(c)}}P_N\psi_0\Big\|_{L_t^qL_x^r} 
  &\le A c^{\frac{2(d-1)}{q}} \|\psi_0\|_{L^2(\R^d)} &\text{ for } c/4\le 2^N \le 4c,\\
  \Big\| e^{-it\mathcal{H}_J^{(c)}}P_N\psi_0\Big\|_{L_t^{\tilde q}L_x^{\tilde r}} 
  &\le  A \|\psi_0\|_{L^2(\R^d)} &\text{ for } 2^N<c/4 \text{ or } 2^N>4c,
 \end{align*}
  which holds for all pair $(q,r),(\tilde q,\tilde r)$ satisfying $2\le q,r,\tilde q,\tilde r\le\infty$ and $\frac{2}{q}+\frac{1}{r}=\frac{1}{2}$ 
  and $\frac{2}{\tilde q}+\frac{d}{\tilde r}=\frac{d}{2}$ with $r\neq\infty$ and $(\tilde q,\tilde r,d)\neq (2,\infty,2)$.
Then, for $c/4\le 2^N\le 4c$ we have 
\begin{align*} 
  \Big\| e^{-it\mathcal{H}_J^{(c)}}P_N\psi_0\Big\|_{L_t^qL_x^r} 
  \le A c^{\frac{2(d-1)}{q}} \|P_N\psi_0\|_{L^2(\R^d)} 
  \le A \||\nabla|^\frac{2(d-1)}{q} P_N\psi_0\|_{L^2(\R^d)}.
\end{align*}
If $2^N<c/4$ or $2^N>4c$, we apply the Sobolev embedding to obtain
\begin{align*} 
  \Big\| e^{-it\mathcal{H}_J^{(c)}}P_N\psi_0\Big\|_{L_t^qL_x^r} 
  \le 
  \Big\||\nabla|^{\left(\frac{d}{\tilde r}-\frac dr\right)}e^{-it H_c^J(-\Delta)} P_N\psi_0\Big\|_{L_t^q L_x^{\tilde r}} 
  &\le A \| |\nabla|^\frac{2(d-1)}{q} P_N\psi_0 \|_{L^2(\R^d)},
\end{align*}
where $\frac{2}{q}+\frac{d}{\tilde r}=\frac d2$.
Then, by the Littlewood-Paley inequality and Minkowski inequality with $q,r\ge2$, we obtain that
\begin{align*}
&\| e^{-it\mathcal{H}_J^{(c)}}P_N\psi_0\|_{L_t^qL_x^r}
\approx \| \{ \sum_{N\in\Z} |e^{-it H_{c}^J(-\Delta)} P_N\psi_0|^2\}^\frac12 \|_{L_t^qL_x^r} \\
&\ls  \left\{ \sum_{N\in\Z} \| e^{-itH_{c}^J(-\Delta)} P_N\psi_0\|_{L_t^qL_x^r}^2  \right\}^\frac12 
\ls \left\{ \sum_{N\in\Z} \| |\nabla|^\frac{2(d-1)}{q}  P_N\psi_0\|_{L^2}^2  \right\}^\frac12 
\approx \| \psi_0\|_{ \dot H^\frac{2(d-1)}{q}}^2.
\end{align*}  

\color{black}
   

\section{Application 1: Non-relativistic approximation of Hartree-Fock or Hartree equations}
As applications of Strichartz estimates for higher-order linear Schr\"odinger equation, we study the non-relativistic limit problems for nonlinear equations with Hartree-Fock or Hartree nonlinearities when $J$ is odd. Here, we only deal with Hartree-Fock nonlinearity, because analogous argument can be directly applied to Hartree nonlinearity. Recall the pseudo-relativistic Hartree-Fock equation (with $\hbar, m = 1$)
\begin{equation}\label{pNLHF-1}
    i\partial_t \psi_k^{(c)} =\mathcal{H}^{(c)}\psi_k^{(c)}+H(\psi_k^{(c)})-F_k(\psi_k^{(c)}),\quad k=1,2,..., N,
\end{equation}
where 
$$H(\psi_k^{(c)})=\sum_{\ell=1}^N\left(\frac{\kappa}{|x|}*|\psi_\ell^{(c)}|^2\right)\psi_k^{(c)} \quad \mbox{and} \quad F_k(\psi_k^{(c)})=\sum_{\ell=1,\ell\neq k}^N\left(\frac{\kappa}{|x|}*(\overline{\psi_\ell^{(c)}}\psi_k^{(c)})\right)\psi_\ell^{(c)}.$$

\subsection{ Uniform bound for nonlinear solutions}
\subsubsection{Results on psuedo-relativistic equations}\label{sub:Results on pNLHF}
The well-posedness results for \eqref{pNLHF-1} on the energy space $H^\frac12(\R^3)$ have been established in \cite{Cho2006, FL-2007, Lenzmann2007}. The proof of the local well-posedness follows from the standard contraction mapping principle with Sobolev embedding, and this local solution can be extended to the global one via conservation laws of the energy and mass. We emphasize that global bounds of the solutions are independent of $c$. The statement of the result (without proof) is given below:
\begin{proposition}\label{Pro:HRNLS}
 Let $c\ge1$ and $\Psi_0 \in  H^\frac12(\R^3;\C^N)$ be given initial data. Suppose that $\kappa > 0$, or $\kappa <0$ and $\|\Psi_0\|_{ L^2(\R^3;\C^N)}$ is sufficiently small. Then, 
there exists a global solution $\Psi^{(c)} \in C(\R: H^\frac12(\R^3;\C^N))$ to \eqref{pNLHF-1} such that 
\begin{align}\label{Uniform bound} 
 \sup_{t\in \R} \| \Psi^{(c)}(t) \|_{H^\frac12(\R^3;\C^N) }  \le C(M,E),
\end{align}
where the bound $C(M,E)$ depends on mass and energy given by
\[\begin{aligned}
&M = M(\Psi_0) = \sum_{k=1}^N \|\psi_{k,0}\|_{L^2(\R^3)}^2 \quad \mbox{and} \\
&E = E(\Psi_0) =  \sum_{k=1}^N  \Big\{\frac12 \Big\la \psi_{k,0}, \Big\{\sqrt{c^4-c^2\Delta} - c^2m\Big\}\psi_{k,0}   \Big\ra  \\
&~{} \hspace{6em} + \frac{1}{4}\Big\la \sum_{\ell=1}^N\Big(\frac{\kappa}{|x|}\ast |\psi_{\ell,0}|^{2}\Big)\psi_{k,0} - \sum_{\ell=1}^N\Big(\frac{\kappa}{|x|}\ast \psi_{k,0}\overline{\psi_{\ell,0}})\Big)\psi_{\ell,0}, \psi_{k,0} \Big\ra \Big\}. \end{aligned}\]
\end{proposition}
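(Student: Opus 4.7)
The plan is a standard two-step argument: local well-posedness in $H^{1/2}(\R^3;\C^N)$ by contraction, then global extension with a uniform-in-$c$ bound from conservation of mass and energy. For the first step, I recast \eqref{pNLHF-1} in Duhamel form
\[
\psi_k^{(c)}(t) = e^{-it\mathcal{H}^{(c)}}\psi_{k,0} - i\int_0^t e^{-i(t-s)\mathcal{H}^{(c)}}\bigl(H(\psi_k^{(c)})-F_k(\psi_k^{(c)})\bigr)(s)\,ds.
\]
Since $\mathcal{H}^{(c)}$ generates a unitary group on $H^{1/2}(\R^3)$, the linear flow preserves the $H^{1/2}$ norm, so it suffices to bound the Hartree and Fock nonlinearities in $H^{1/2}$. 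Hardy--Littlewood--Sobolev together with $H^{1/2}(\R^3)\hookrightarrow L^3(\R^3)$ and a fractional Leibniz estimate gives
\[
\|H(\psi_k)-F_k(\psi_k)\|_{H^{1/2}}\ls \|\Psi\|_{H^{1/2}}^3,
\]
so the Banach fixed point theorem on a small ball in $C([-T,T];H^{1/2})$ with $T\sim\|\Psi_0\|_{H^{1/2}}^{-2}$ yields a unique local solution whose time of existence is independent of $c\ge1$.

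The second step rests on the conservation laws $M(\Psi^{(c)}(t))=M(\Psi_0)$ and $E(\Psi^{(c)}(t))=E(\Psi_0)$, which follow from standard pairing calculations and are justified at $H^{1/2}$ regularity by mollification. The crucial uniform-in-$c$ coercivity of the relativistic kinetic term comes from the pointwise identity
\[
\sqrt{c^4+c^2|\xi|^2}-c^2 = \frac{c^2|\xi|^2}{\sqrt{c^4+c^2|\xi|^2}+c^2}\gs \min(|\xi|^2,\,c|\xi|),
\]
which for $c\ge1$ dominates $\bra\xi$ up to an additive constant, hence
\[
\bigl\la\psi,\mathcal{H}^{(c)}\psi\bigr\ra + C\|\psi\|_{L^2}^2 \gs \|\psi\|_{H^{1/2}}^2
\]
uniformly in $c\ge 1$. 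When $\kappa>0$, rewriting the Hartree--Fock interaction energy in terms of the one-particle density matrix $\gamma(x,y)=\sum_k\psi_k(x)\overline{\psi_k(y)}$ and density $\rho(x)=\gamma(x,x)$ produces the manifestly nonnegative quadratic form $\tfrac{\kappa}{4}\iint\frac{\rho(x)\rho(y)-|\gamma(x,y)|^2}{|x-y|}\,dx\,dy\ge 0$, so that $\|\Psi^{(c)}(t)\|_{H^{1/2}}^2\ls E(\Psi_0)+M(\Psi_0)$ is immediate.

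The main obstacle is the attractive case $\kappa<0$, where the quartic term contributes negatively to the energy and must be absorbed into the kinetic piece. I would estimate it by Hardy--Littlewood--Sobolev combined with Gagliardo--Nirenberg,
\[
\biggl|\iint\frac{|\psi_\ell(x)|^2|\psi_k(y)|^2}{|x-y|}\,dx\,dy\biggr|\ls \|\psi_\ell\|_{L^{12/5}}^2\|\psi_k\|_{L^{12/5}}^2\ls \|\Psi\|_{L^2}^2\|\Psi\|_{\dot H^{1/2}}^2,
\]
so that whenever $\|\Psi_0\|_{L^2}$ is sufficiently small, mass conservation keeps this bound valid for all time, and the coercive estimate above yields $\|\Psi^{(c)}(t)\|_{H^{1/2}}^2\ls E(\Psi_0)+M(\Psi_0)$ with constants depending only on $M$ and $E$. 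Iterating the local theory with this a priori bound produces the global solution satisfying \eqref{Uniform bound}. Throughout, the cleanest way to keep everything $c$-independent is to phrase every nonlinear estimate purely in terms of Sobolev norms, so that the only $c$-dependence lives in the coercivity of the relativistic symbol, which has already been controlled uniformly.
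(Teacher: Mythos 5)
Your proposal is correct, and it follows exactly the route the paper alludes to in the surrounding prose (local well-posedness by contraction in $H^{1/2}$ using the Sobolev-based trilinear estimate, then global extension via conservation of mass and energy); note, however, that the paper does not actually supply a proof of Proposition~\ref{Pro:HRNLS} and instead cites \cite{Cho2006, FL-2007, Lenzmann2007}. Your reconstruction fills in the two points the paper leaves implicit --- the $c$-uniform coercivity $\bra\psi,\mathcal{H}^{(c)}\psi\ra + C\|\psi\|_{L^2}^2 \gs \|\psi\|_{H^{1/2}}^2$ coming from $\sqrt{c^4+c^2|\xi|^2}-c^2 \gs \min(|\xi|^2,c|\xi|)$, and the sign/absorption mechanism for the interaction energy (pointwise $\rho(x)\rho(y)\ge|\gamma(x,y)|^2$ for $\kappa>0$; HLS plus Gagliardo--Nirenberg with small mass for $\kappa<0$) --- which are precisely what justify the paper's claim that the bound is independent of $c\ge1$.
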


\begin{remark}
A key estimate to handle Hartree-type nonlinearities is as follows (see  \cite[Lemma 3.2]{Choi2016} for the details of the proof): for $f_j \in H^s(\R^3)$, $j=1,2,3$,
\begin{align}\label{Estimates for nonlinearity} 
  \left\| \left(\frac{1}{|x|}\ast (f_1 f_2)\right)f_3\right\|_{H^s(\R^3)}   \ls \prod_{j=1}^3 \| f_j \|_{ H^s(\R^3) }, \quad \text{ for } s\ge\frac12,
 \end{align}
which follows from endpoint Sobolev inequality 
\begin{align}\label{Sobolev inequality} 
  \left\|\frac{1}{|x|}\ast (f_1f_2)\right\|_{L^\infty(\R^3)} 
  \ls \|f_1\|_{H^s(\R^3)}\|f_2\|_{H^s(\R^3)} \quad 
\text{ for } s\ge\frac12
\end{align}
and the fractional Leibniz rule \cite{Kato1995}. Moreover, the particular case when $s=0$ can be obtained by using the Sobolev inequality 
\begin{align}\label{nonlinear term in L2} 
  \left\| \left(\frac{1}{|x|}\ast (f_1f_2)\right)f_3\right\|_{L^2(\R^3)} 
  \ls  \| f_1 \|_{L^{3}(\R^3)}   
  \|f_2\|_{L^{2}(\R^3)}\| f_3\|_{L^{3}(\R^3)}.
  \end{align} 
\end{remark}
We are now focus on the higher-order equation with the Hartree-Fock nonlinearity:
\begin{equation}\label{hNLS-1}
    i\partial_t \phi_k^{(c)} =\mathcal{H}_J^{(c)}\phi_k^{(c)}+H(\phi_k^{(c)})-F_k(\phi_k^{(c)}),\quad k=1,2,..., N.
\end{equation}
%
The local well-posedenss of \eqref{hNLS-1} in $H^s(\R^3)$, $s\ge\frac12$ follows from the same argument in the proof of Proposition~\ref{Pro:HRNLS}. When $J$ is odd, this local solution can be extended to the global one in $H^1(\R^3)$ thanks to the conservation of the mass and energy: 

\[\begin{aligned}
&\mathcal{M}(\Phi_0)= \|\Phi_0\|_{L^2(\R^3;\C^N)}^2\\
&\mathcal{E}(\Phi_0)= \sum_{k=1}^N\Big\{\Big\la \phi_{k,0}, \sum_{j=0}^J\frac{(-1)^j(2j)!}{(j+1)!j!2^{2j+1}c^{2j}}(-\Delta)^{j+1}\phi_{k,0}\Big\ra\\
&~{} \hspace{6em} + \frac{1}{4}\Big\la \sum_{\ell=1}^N\Big(\frac{\kappa}{|x|}\ast |\phi_{\ell,0}|^{2}\Big)\phi_{k,0} - \sum_{\ell=1}^N\Big(\frac{\kappa}{|x|}\ast \phi_{k,0}\overline{\phi_{\ell,0}})\Big)\phi_{\ell,0}, \phi_{k,0} \Big\ra \Big\}.
\end{aligned}\]
%
Indeed, the kinetic part in the energy controls $\dot{H}^1(\R^3)$ norm of solutions 
\begin{align*} 
  \left\la \phi_{k,0}, \sum_{j=0}^J\frac{(-1)^j(2j)!}{(j+1)!j!2^{2j+1}c^{2j}}(-\Delta)^{j+1}\phi_{k,0}\right\ra
  &=\int \omega_c^J(|\xi|)|\widehat{\phi}_{k,0}(\xi)|^2 d\xi  \\
  &\ge \frac14 \int |\xi|^2|\widehat{\phi}_{k,0}(\xi)|^2 d\xi
  \gs  \|\phi_{k,0}\|_{\dot H^1}^2. 
\end{align*}
Thus, the solution $\Phi^{(c)}$ exists for all time with the following bound
\begin{align*} 
 \sup_{t\in \R}\| \Phi^{(c)}(t) \|_{H^1(\R^3;\C^N)} \le C(\mathcal{M},\mathcal{E}),
\end{align*}
independent of $c$.

\subsubsection{Uniform bounds of solutions to higher order equations}
In order to prove the convergence of \eqref{hNLS-1} globally in time for initial data given in $H^\frac12(\R^3;\C^N)$, the existence of global solutions to \eqref{hNLS-1}in $H^{\frac12}(\R^3;\C^N)$ is required. The following proposition guarantees that the global solutions to \eqref{hNLS-1} exist in $L^2(\R^3;\C^N)$, and it satisfies uniform-in-$c$ boundedness.
\begin{proposition}[Global $L^2$ solutions with uniform Strichartz bounds]\label{Pro:GlobalL2}
Let $c\ge1$ and $J\in 2\N-1$. For given $\Phi_0 \in L^2(\R^3;\C^N)$, there exists a unique global solution $\Phi^{(c)}$ to \eqref{hNLS-1} in the class
\begin{align*} 
  \Phi^{(c)}\in C(\R;L^2(\R^3;\C^N)) \cap L_t^{4}(\R;L^{3}(\R^3;\C^N)).
  \end{align*}  
Moreover, the global solution satisfies 
\begin{align}\label{Uniform Strichartz bound} 
\| \Phi^{(c)} \|_{ L_t^{4}([0,T];L^{3}(\R^3;\C^N)) } \ls (1+T)^{1/4},
\end{align}
where the implicit constant depends only on $\mathcal{M}(\Phi_0)$, but independent of $c$.
\end{proposition}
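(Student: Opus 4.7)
The plan is to run a Strichartz-based fixed-point argument on a short time interval and then iterate using conservation of the $L^2$-mass; the key point is to ensure that every constant produced is independent of $c$. I will use the odd-admissible pair $(q,r)=(4,3)$ in dimension $d=3$ (indeed $\tfrac{2}{4}+\tfrac{3}{3}=\tfrac{3}{2}$), paired in the inhomogeneous estimate with $(\tilde q,\tilde r)=(\infty,2)$. Theorem~\ref{Thm:Strichartz odd order HSE} then yields, uniformly in $c\ge 1$,
\[
\|e^{-it\mathcal{H}_J^{(c)}}\Phi_0\|_{L^\infty_tL^2_x\cap L^4_tL^3_x}
+\Bigl\|\int_0^t e^{-i(t-s)\mathcal{H}_J^{(c)}}F(s)\,ds\Bigr\|_{L^\infty_tL^2_x\cap L^4_tL^3_x}
\ls\|\Phi_0\|_{L^2}+\|F\|_{L^1_tL^2_x}.
\]
Every summand of $\mathcal{N}_k(\Phi)=H(\phi_k)-F_k(\phi_k)$ has the shape $(|x|^{-1}*(f_1f_2))f_3$, so the endpoint bound \eqref{nonlinear term in L2} (together with its routine trilinear analogue for differences) gives the pointwise estimate $\|\mathcal{N}(\Phi)(s)\|_{L^2_x}\ls\|\Phi(s)\|_{L^3_x}^{2}\|\Phi(s)\|_{L^2_x}$. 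Combining with H\"older in time, $\|\cdot\|_{L^2[0,T]}\le T^{1/4}\|\cdot\|_{L^4[0,T]}$, this upgrades to
\[
\|\mathcal{N}(\Phi)\|_{L^1([0,T];L^2_x)}\ls T^{1/2}\|\Phi\|_{L^4([0,T];L^3_x)}^{2}\|\Phi\|_{L^\infty([0,T];L^2_x)},
\]
and similarly for differences.

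Next I set up the Picard map $\Lambda\Phi(t)=e^{-it\mathcal{H}_J^{(c)}}\Phi_0-i\int_0^te^{-i(t-s)\mathcal{H}_J^{(c)}}\mathcal{N}(\Phi)(s)\,ds$ on the complete ball
\[
X_T=\bigl\{\Phi\in C([0,T];L^2)\cap L^4([0,T];L^3):\|\Phi\|_{L^\infty L^2}+\|\Phi\|_{L^4L^3}\le 2A\|\Phi_0\|_{L^2}\bigr\},
\]
where $A$ is the $c$-uniform Strichartz constant. Combining the two displays above, the self-map/contraction property holds on $X_{T_0}$ provided $T_0^{1/2}\|\Phi_0\|_{L^2}^{2}$ is sufficiently small depending only on $A$; hence $T_0=T_0(\mathcal{M}(\Phi_0))$. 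The resulting fixed point is a local solution, unique in the Strichartz class by the same trilinear difference bound. Since the Hartree--Fock nonlinearity obeys the same real-part cancellation used for \eqref{pNLHF-1} (see Section~\ref{sub:Results on pNLHF}), the $L^2$-mass is conserved along \eqref{hNLS-1}; consequently the same time step $T_0$ can be reused at every iteration, producing a unique global solution in $C(\R;L^2(\R^3;\C^N))\cap L^4_{\mathrm{loc}}(\R;L^3(\R^3;\C^N))$.

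For the growth bound \eqref{Uniform Strichartz bound}, I cover $[0,T]$ with $\lfloor T/T_0\rfloor+1$ slabs of length $T_0$ on each of which the local construction yields $\|\Phi^{(c)}\|_{L^4_tL^3_x}\le 2A\|\Phi_0\|_{L^2}$; summing the fourth powers gives
\[
\|\Phi^{(c)}\|_{L^4([0,T];L^3)}^{4}\le\sum_{\ell=0}^{\lfloor T/T_0\rfloor}\|\Phi^{(c)}\|_{L^4([\ell T_0,(\ell+1)T_0];L^3)}^{4}\ls\Bigl(\tfrac{T}{T_0}+1\Bigr)\|\Phi_0\|_{L^2}^{4}\ls_{\mathcal{M}(\Phi_0)}1+T,
\]
which is the claim. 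The main (mild) obstacle is in fact bookkeeping rather than any deep analysis: one must verify that $T_0$ and the implicit constants at every step depend only on $\mathcal{M}(\Phi_0)$. This $c$-uniformity is forced because Theorem~\ref{Thm:Strichartz odd order HSE} is $c$-uniform, the nonlinear estimate \eqref{nonlinear term in L2} is purely spatial, and $L^2$-mass conservation is $c$-independent, so no $c$-dependent quantity ever enters the iteration.
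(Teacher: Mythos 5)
Your proposal follows essentially the same route as the paper: a contraction argument with the odd-admissible pair $(4,3)$ paired against $(\tilde q,\tilde r)=(\infty,2)$, the trilinear bound \eqref{nonlinear term in L2} plus H\"older in time to produce the $T_0^{1/2}$ smallness factor, the $L^2$-mass conservation to iterate with a fixed time step $T_0=T_0(\mathcal{M}(\Phi_0))$, and a summation of the $L^4_tL^3_x$ fourth powers over $\lfloor T/T_0\rfloor+1$ slabs to obtain the $(1+T)^{1/4}$ growth. All constants indeed come from the $c$-uniform Strichartz constant $A$, the purely spatial estimate \eqref{nonlinear term in L2}, and mass conservation, so the $c$-uniformity claim is correct and matches the paper's bookkeeping.
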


\begin{proof}
First, we prove the local well-posedness. Let $I=[0,T_0]$ be a sufficiently small interval to be chosen later. Duhamel's principle ensures that the solution to \eqref{hNLS-1} is equivalent to the following integral formula:
\begin{align*}
  \Gamma_k(\Phi^{(c)})=e^{-it\mathcal{H}^{c}_J(-\Delta)} \phi_{k,0} -i\int_0^te^{-i(t-s)\mathcal{H}^{c}_J(-\Delta)}\left(H(\phi_k^{(c)})-F_k(\phi_k^{(c)})\right)(s) \; ds.
\end{align*}
Let $B$ be a constant given by
\[B = 2A,\]
for $A$ as in \eqref{Strichartz for odd}. Then, the standard argument shows the map $\Gamma$ is contractive in the ball
\begin{align*} 
 X_{I,B}&=\Big\{ \Phi^{(c)} \in C_t(I;L^2(\R^3;\C^N)) \cap L_t^{4}(I;L^{3}(\R^3;\C^N)) :    \\
&\qquad\qquad \|\Phi^{(c)}\|_{X_I}:=\| \Phi^{(c)} \|_{  C_t(I;L^2(\R^3\;\C^N)) }
 +\| \Phi^{(c)} \|_{ L_t^{4}(I;L^{3}(\R^3;\C^N)) }
 \le 4B \| \Phi_0 \|_{ L^2(\R^3; \C^N) }
 \Big\}.
\end{align*}
Indeed, by unitarity and Strichartz estimates \eqref{Strichartz for odd}, we have 
\[\begin{aligned} 
&\| \Gamma(\Phi^{(c)}) \|_{C_t(I;L^2(\R^3;\C^N))} + \| \Gamma(\Phi^{(c)}) \|_{L_t^4(I;L^3(\R^3;\C^N))}\\
&\le 2B\| \Phi_0 \|_{ L^2(\R^3;\C^N) } 
\\
&~{}+ 2|\lambda|\sum_{k, \ell=1}^N
\left\{\| (|x|^{-1}\ast |\phi_{\ell}^{(c)}|^2)\phi_k^{(c)}\|_{ L_t^{1}(I;L^{2}(\R^3)) } 
+ \| (|x|^{-1}\ast (\overline{\phi_\ell^{(c)}}\phi_k^{(c)})\phi_{\ell}^{(c)} \|_{ L_t^{1}(I;L^{2})(\R^3) }  \right\}.
\end{aligned}\]
Then, by using  \eqref{nonlinear term in L2} and H\"older inequality in time, we conclude that 
\begin{align}\begin{aligned}\label{priori bound}
\| \Gamma(\Phi^{(c)}) \|_{X_I}
\le&~{} 2B\| \Phi_0\|_{ L^2(\R^3;\C^N) } \\
&~{}+ C T^{\frac12}\sum_{k, \ell =1}^N \|\phi_\ell^{(c)}\|_{L_t^{\infty}(I;L^{2}(\R^3))}\|\phi_\ell^{(c)}\|_{L_t^4(I;L^3(\R^3))}\|\phi_k^{(c)}\|_{L_t^4(I;L^3(\R^3))}\\
\le&~{} 2B\| \Phi_0\|_{ L^2(\R^3;\C^N) } + CT^{\frac12} \|\Phi^{(c)}\|_{X_I}^3.
\end{aligned}\end{align}
Analogously, we obtain such an a-priori bound for difference of two solutions $\Phi_1^{(c)}, \Phi_2^{(c)} \in X_{I,B}$ that
\begin{align}\label{contraction} 
\|\Gamma(\Phi_1^{(c)}) - \Gamma(\Phi_2^{(c)})\|_{X_I}  \ls 2CT^\frac12( \|\Phi_1^{(c)}\|_{X_I} + \|\Phi_2^{(c)}\|_{X_I} )^2\|\Phi_1^{(c)}-\Phi_2^{(c)}\|_{X_I}.
\end{align}
By taking an appropriate small $T_0$ depending on $\| \Phi_0\|_{ L^2(\R^3;\C^N) }$, the map $\Gamma$ is a contraction map in $X_{I,B}$, thus we can find the solution $\Phi^{(c)}$ satisfying 

\begin{align}\label{solution on I} 
 \|\Phi^{(c)} \|_{ X_I } \le 4B\| \Phi_0 \|_{ L^2(\R^3; \C^N) }. 
\end{align}
For any given $T>0$, using $\mathcal{M}(\Phi_0)$ and \eqref{solution on I} inductively, we show 
\begin{align}\label{time bound of solution}
  \| \Phi^{(c)}\|_{L_{t\in[0,T]}^4 L^{3}(\R^3; \C^N)}^4 \ls \sum_{j=1}^{[T/T_0]+1}  \| \Phi^{(c)}\|_{L_t^4(I_j;L^{3}(\R^3;\C^N))}^4 \lesssim T, 
 \end{align}
where $I_j=[(j-1)T_0,jT_0]$ and the implicit constant depends only on $\| \Phi_0 \|_{ L^2(\R^3; \C^N) }$, but not $c$.
\end{proof}


\subsection{Approximation : Proof of Theorem~\ref{Thm:Approximation}}
Before proving Theorem~\ref{Thm:Approximation}, we employ the following lemma.
\begin{lemma}\label{lem:5.4}
Let $c>1$ and $t \in \R$. For any $f \in H^{\frac12}(\R^3)$, we have 
\begin{align*} 
\| (e^{it(\sqrt{c^4-c^2\Delta}-c^2)}-e^{it\mathcal{H}^{c}_J(-\Delta)})f\|_{ L^2(\R^3) } \ls c^{-\frac{J}{2(J+1)}} \langle t \rangle \| f \|_{ H^\frac12(\R^3) }.
\end{align*}
\end{lemma}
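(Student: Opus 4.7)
By Plancherel, writing the symbols as $\Omega^{(c)}(\xi) = \sqrt{c^4 + c^2|\xi|^2} - c^2$ for the pseudo-relativistic operator and $\Omega_J^{(c)}$ as in \eqref{def omega} for its $J$-th Taylor truncation, one has
\[
\bigl\|(e^{it(\sqrt{c^4-c^2\Delta}-c^2)}-e^{it\mathcal{H}_J^{(c)}})f\bigr\|_{L^2}^2 = \int_{\mathbb{R}^3} \bigl|e^{it\Omega^{(c)}(\xi)} - e^{it\Omega_J^{(c)}(\xi)}\bigr|^2 |\widehat{f}(\xi)|^2\,d\xi.
\]
I would combine the elementary bound $|e^{ia}-e^{ib}|\leq \min(2,|a-b|)$ with the Taylor-remainder estimate
\[
|\Omega^{(c)}(\xi)-\Omega_J^{(c)}(\xi)| \lesssim_J c^{-2J} |\xi|^{2J+2}, \qquad \text{for } |\xi|\leq c,
\]
which follows from $\Omega^{(c)}(\xi) = c^2(\sqrt{1+(|\xi|/c)^2}-1)$ by applying Lagrange's form of the remainder to the Taylor expansion at $u=|\xi|/c=0$ recalled in \eqref{series expansion}.

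The key step is the choice of frequency cutoff. I would split the integral at the $t$-independent level $R := c^{J/(J+1)}$, which satisfies $R\leq c$ so the Taylor bound is valid on $\{|\xi|\leq R\}$. On the low-frequency region, use $|e^{ia}-e^{ib}|\leq |a-b|$ together with $|\xi|^{4J+4}\leq R^{4J+3}|\xi|$ to get
\[
\int_{|\xi|\leq R} |e^{it\Omega^{(c)}}-e^{it\Omega_J^{(c)}}|^2|\widehat{f}|^2\,d\xi \lesssim t^2 c^{-4J}\int_{|\xi|\leq R}|\xi|^{4J+4}|\widehat{f}|^2\,d\xi \lesssim t^2\, c^{-4J} R^{4J+3}\,\|f\|_{\dot H^{1/2}}^2.
\]
On the high-frequency region, use $|e^{ia}-e^{ib}|\leq 2$ and $1\leq R^{-1}|\xi|$:
\[
\int_{|\xi|>R} 4|\widehat{f}|^2\,d\xi \leq 4R^{-1}\|f\|_{\dot H^{1/2}}^2.
\]

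With $R=c^{J/(J+1)}$, the arithmetic identity $-4J + (4J+3)\cdot J/(J+1) = -J/(J+1)$ shows both coefficients equal $c^{-J/(J+1)}$. Combining the two pieces and using $\|f\|_{\dot H^{1/2}}\leq \|f\|_{H^{1/2}}$ yields
\[
\bigl\|(e^{it(\sqrt{c^4-c^2\Delta}-c^2)}-e^{it\mathcal{H}_J^{(c)}})f\bigr\|_{L^2}^2 \lesssim (t^2+1)\, c^{-J/(J+1)}\,\|f\|_{H^{1/2}}^2,
\]
and taking square roots gives the claim. There is no real obstacle, but one conceptual point deserves emphasis: a $t$-dependent optimal balance would sharpen the time growth to $\langle t\rangle^{1/(4(J+1))}$; the stated $\langle t\rangle$ factor is precisely what the simpler $t$-independent choice $R=c^{J/(J+1)}$ produces, and it is what is needed for the Gronwall step in the proof of Theorem~\ref{Thm:Approximation}.
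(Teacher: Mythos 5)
Your proof is correct and follows essentially the same route as the paper: the paper also splits at the frequency threshold $c^{J/(J+1)}$, uses the Lagrange remainder bound $|\Omega^{(c)}-\Omega_J^{(c)}|\lesssim_J c^{-2J}|\xi|^{2J+2}$ together with the $|t|\cdot|a-b|$ bound on the low-frequency piece, and the $\frac12$-derivative gain at high frequency (the paper phrases the high-frequency part via the triangle inequality and unitarity of each flow, which is the same as your $|e^{ia}-e^{ib}|\le 2$). Your closing observation that a $t$-dependent choice of cutoff would improve the time growth is accurate but immaterial here, since only the exponential-in-$t$ behavior matters for the Gronwall argument that follows.
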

\begin{proof}
  We write
  \begin{align*} 
    &(e^{it(\sqrt{c^4-c^2\Delta}-c^2)}-e^{it\mathcal{H}^{c}_J(-\Delta)})f \\
    &\quad =(e^{it(\sqrt{c^4-c^2\Delta}-c^2)}-e^{it\mathcal{H}^{c}_J(-\Delta)})P_{low}f
    +e^{it(\sqrt{c^4-c^2\Delta}-c^2)}P_{high}f+e^{it\mathcal{H}^{c}_J(-\Delta)}P_{high}f, 
  \end{align*}
 where $\widehat{P_{low}f}=\mathbf{1}_{|\xi|\le c^{\frac{J}{J+1}}}\widehat{f}$ and $P_{high}= 1-P_{low}$. By Taylor remainder theorem, we have 
\begin{align*} 
\Big| \sqrt{c^4+c^2|\xi|^2}-c^2-\sum_{j=1}^J \frac{(-1)^{j+1}(2j-2)!}{(j-1)!j!2^{2j-1}c^{2j-2}}|\xi|^{2j} \Big|  
\le \alpha_{J}c^{-2J}|\xi^*|^{2J+2},
\end{align*}
for some $0 < |\xi^*| < |\xi|$ and constant $\alpha_J$, which enables us to control the low frequency part as
 \begin{align*} 
&~{}\left\| (e^{it(\sqrt{c^4-c^2\Delta}-c^2)}-e^{it\mathcal{H}^{c}_J(-\Delta)})P_{low}f\right\|_{ L^2(\R^3) }\\
 \ls&~{} |t| \left\| \{\sqrt{c^4+c^2|\xi|^2}-c^2-\mathcal{H}^{c}_J(|\xi|^2)\} \widehat{f} \right\|_{L^2(|\xi|\le c^{\frac{J}{J+1}} )} \\
\ls&~{} \alpha_{J} c^{-2J}\| |\xi|^{2J+\frac32}|\xi|^{\frac12}  \widehat{\phi} \|_{L^2(|\xi|\le c^{\frac{J}{J+1}} )} 
  \ls \alpha_{J} c^{-\frac{J}{2(J+1)}} \|\phi\|_{H^\frac12}.
\end{align*}
On the other hand, it is not difficult to control high frequency parts:
\begin{align*} 
\|e^{it(\sqrt{c^4-c^2\Delta}-c^2)}P_{high} f\|_{L^2}
\ls \| |\xi|^{-\frac12}|\xi|^\frac12 \widehat{f} \|_{L^2(|\xi|\ge c^{\frac{J}{J+1}} )}
\ls c^{-\frac{J}{2(J+1)}} \|f\|_{H^\frac12}.
\end{align*}
The same argument can be applied to $e^{it\mathcal{H}^{c}_J(-\Delta)}P_{high}f$. Collecting all, we complete the proof.
\end{proof}

\begin{proof}[Proof of Theorem~\ref{Thm:Approximation}]
We write the difference of \eqref{hNLS} and \eqref{pNLHF} with $\Psi_0 = \Phi_0$ as
\[\begin{aligned} 
 \psi_k^{(c)}- \phi_k^{(c)} =&~{} (e^{-it(\sqrt{c^4-c^2\Delta}-c^2)}-e^{-it\mathcal{H}^{c}_J(-\Delta)})\psi_{k,0} \\
&~{}-i \int_{0}^{t} \left( e^{-i(t-s)(\sqrt{c^4-c^2\Delta}-c^2)} -e^{-i(t-s)\mathcal{H}^{c}_J(-\Delta)}\right) \left(H(\psi_k^{(c)})-F_k(\psi_k^{(c)})\right)(s)\; ds
(s)ds  \\
&~{}+i\kappa \int_{0}^{t} e^{-i(t-s)\mathcal{H}^{c}_J(-\Delta)}\mathcal{N}_1(\psi_k^{(c)},\phi_k^{(c)})(s)\; ds \\
&~{}+i\kappa \int_{0}^{t} e^{-i(t-s)\mathcal{H}^{c}_J(-\Delta)}\mathcal{N}_2(\psi_k^{(c)},\phi_k^{(c)})(s)\; ds \\
=:&~{} \mathcal{I} + \mathcal{II} + \mathcal{III} + \mathcal{IV},
\end{aligned}\]
for $k=1,2,\cdots,N$, where
\[\begin{aligned}
\mathcal{N}_1(\psi_k^{(c)},\phi_k^{(c)}) 
=&~{}  \sum_{\ell=1}^N \left(|x|^{-1}\ast(|\psi_{\ell}^{(c)}|^{2}  - |\phi_{\ell}^{(c)}|^{2})\right)\phi_k^{(c)} 
- \sum_{\ell=1}^N \left(|x|^{-1}\ast(\overline{\psi_{\ell}^{(c)}}\psi_k^{(c)} - \overline{\phi_{\ell}^{(c)}}\phi_k^{(c)})\right)\phi_{\ell}^{(c)},\\
\mathcal{N}_2(\psi_k^{(c)},\phi_k^{(c)}) 
=&~{} \sum_{\ell=1}^N \left(|x|^{-1}\ast(|\psi_{\ell}^{(c)}|^{2})\right)(\psi_k^{(c)} - \phi_k^{(c)}) 
- \sum_{\ell=1}^N \left(|x|^{-1}\ast( \overline{\psi_{\ell}^{(c)}}\psi_k^{(c)})\right)(\psi_{\ell}^{(c)} - \phi_{\ell}^{(c)}).
\end{aligned}\]
By Lemma \ref{lem:5.4}, we immediately obtain
\begin{equation}\label{eq:I}
\|\mathcal I\|_{L^2(\R^3)} \lesssim c^{-\frac{J}{J+1}}\langle t \rangle \|\psi_{k,0}\|_{H^{\frac12}(\R^3)}.
\end{equation}
Moreover, by using Lemma \ref{lem:5.4}, \eqref{Estimates for nonlinearity} and Proposition~\ref{Pro:HRNLS}, we have
\begin{equation}\label{eq:II}
\begin{aligned}
\|\mathcal{II}\|_{L^2(\R^3)} \lesssim&~{} c^{-\frac{J}{2(J+1)}}\int_0^t \langle t-s \rangle \left\|H(\psi_k^{(c)})(s)-F_k(\psi_k^{(c)})(s)\right\|_{H^{\frac12}(\R^3)}\; ds\\
\lesssim&~{}c^{-\frac{J}{2(J+1)}}\langle t \rangle^2\sup_{s\in[0,t]}\|\psi_k^{(c)}(s)\|_{ H^\frac12(\R^3)}^3 \ls c^{-\frac{J}{2(J+1)}}\langle t \rangle^2\|\psi_{k,0}\|_{H^{\frac12}(\R^3)}^3.
\end{aligned}
\end{equation}
Furthermore, by \eqref{nonlinear term in L2} and Sobolev embedding $H^\frac12(\R^3) \hookrightarrow  L^3(\R^3)$, and by \eqref{Sobolev inequality}, we have
\begin{equation}\label{eq:III} 
\begin{aligned}
\|\mathcal{III}\|_{L^2(\R^3)} \lesssim&~{} \int_0^t \left\|\mathcal{N}_1(\psi_k^{(c)},\phi_k^{(c)})(s) \right\|_{L^2(\R^3)}\; ds\\
\lesssim&~{}  \int_0^t \left(\|\Psi^{(c)}(s)\|_{H^{\frac12}(\R^3)} + \|\Phi^{(c)}(s)\|_{L^3(\R^3)}\right)\|(\Psi^{(c)} -\Phi^{(c)})(s)\|_{L^2(\R^3)}\\
&~{}\hspace{9em} \times \left(\|\psi_k^{(c)}(s)\|_{H^{\frac12}(\R^3)} + \|\phi_k^{(c)}(s)\|_{L^3(\R^3)}\right)\; ds\\
&~{}+ \int_0^t \left(\|\Psi^{(c)}(s)\|_{H^{\frac12}(\R^3)}^2 + \|\Phi^{(c)}(s)\|_{L^3(\R^3)}^2\right)\|(\psi_k^{(c)} -\phi_k^{(c)})(s)\|_{L^2(\R^3)}\; ds
\end{aligned}
\end{equation}
and
\begin{equation}\label{eq:IV}
\begin{aligned} 
\|\mathcal{IV}\|_{L^2(\R^3)} \lesssim&~{} \int_0^t \left\|\mathcal{N}_2(\psi_k^{(c)},\phi_k^{(c)})(s) \right\|\; ds\\
\lesssim&~{}  \int_0^t \|\Psi^{(c)}(s)\|_{H^{\frac12}(\R^3)}^2\|(\psi_k^{(c)} -\phi_k^{(c)})(s)\|_{L^2(\R^3)} \; ds\\
&~{}+ \int_0^t \|\Psi^{(c)}(s)\|_{H^{\frac12}(\R^3)}\|(\Psi^{(c)} -\Phi^{(c)})(s)\|_{L^2(\R^3)}\|\psi_k^{(c)}\|_{H^{\frac12}(\R^3)}\; ds
\end{aligned}
\end{equation}
respectively. Thus, by collecting \eqref{eq:I}--\eqref{eq:IV}, and by applying Proposition~\ref{Pro:HRNLS}, we conclude that
\[\begin{aligned}
&~{}\|\Psi^{(c)}(t)-\Phi^{(c)}(t)\|_{L^{2}(\R^3)} \\
\lesssim&~{} c^{-\frac{J}{2(J+1)}} \langle t \rangle^2\|\Psi_0\|_{H^{\frac12}(\R^3)}\left(1 + \|\Psi_0\|_{H^{\frac12}(\R^3)}^2\right)\\
&~{}+\int_0^t \left(\|\Psi_0(s)\|_{H^{\frac12}(\R^3)}^2 + \|\Phi^{(c)}(s)\|_{L^3(\R^3)}^2\right)\|(\Psi^{(c)} -\Phi^{(c)})(s)\|_{L^2(\R^3)}\; ds.
\end{aligned}\]
By Gronwall's inequality, in addition to \eqref{Uniform Strichartz bound},  we have
\[\|\Psi^{(c)}(t)-\Phi^{(c)}(t)\|_{L^{2}(\R^3)} \lesssim c^{-\frac{J}{2(J+1)}}Ae^{Bt},\]
here the constants $A, B$ depend on $\|\Psi_0\|_{H^{\frac12}}(\R^3)$, but not $c$.
\end{proof}


\section{Application~2: Small data scattering results}
Let $J \in 2\N -1$. Recall the higher-order nonlinear Schr\"odinger equations \eqref{gNLS}: 
\begin{equation}\label{gNLS-1}
  i\partial_t\psi_c= \mathcal{H}_J^{(c)}\psi_c
  +\kappa|\psi_c|^{\nu-1}\psi_c.
  \end{equation}
We first show the global well-posedness of \eqref{gNLS-1} in $H^1(\R^d)$. In what follows, we only focus on $d \ge 2$, since one-dimensional case can be easily proved by only using Sobolev embedding ($H^1(\R) \hookrightarrow L^{\infty}(\R)$). Let $r:=\nu +1$, and let fix $(q,r)$ as the admissible pair in \eqref{o admissible pair}. 
Let $0 < \epsilon \le \delta $ be sufficiently small to be chosen later. Then, for $\psi_{c,0} \in H^1(\R^d)$ with $\|\psi_{c,0}\|_{H^1(\R^d)} \le \epsilon$, the map
\begin{align}\label{integral equation}
\Gamma(\psi)=e^{-it\mathcal{H}_J^{(c)}(-\Delta)}\psi_{c,0} -i\kappa \int_0^te^{-i(t-s)\mathcal{H}_J^{(c)}(-\Delta)} \left(|\psi|^{\nu-1}\psi\right)(s) \; ds
\end{align}
is contractive in the ball
\begin{align*}
X_{\delta}:=\{\psi: \R\times\R^d\rightarrow \C  : \|\psi\|_{X^{1}}:=\| \psi\|_{L_t^\infty(\R ;  H^1(\R^d))} +\|\psi\|_{L_t^q(\R; W^{1,r}(\R^d))} \le \delta \}
\end{align*}
with the topology induced by the metric\footnote{We refer to \cite{Cazenavebook} (in the proof of Theorem 4.4.1) for verifying that $(L^{\infty}H^1 \cap L^qW^{1,r}, d)$ is a complete metric space.}
\[d(u,v) = \|u-v\|_{L_t^{\infty}(\R; L^2(\R^d))} + \|u-v\|_{L_t^q(\R; L^r(\R^d))}.\]
By Strichartz estimates \eqref{Strichartz for odd} and \eqref{Inhomogeneous Strichartz for odd}, we have that 
\begin{align*}
\|\Gamma(\psi)\|_{X^1} &\le  A\| \psi_{c,0}\|_{H^1(\R^d)} + C \| |\psi|^{\nu-1}\psi\|_{L_t^{q'}(\R;W^{1,r'}(\R^d))},
\end{align*}
for $A$ as in \eqref{Strichartz for odd} and some constant $C > 0$, independent of $c$ and $J$, where $(q',r')$ is the H\"older conjugate of $(q,r)$. Note that
\begin{equation}\label{eq:gradient}
\nabla(|\psi|^{\nu -1}\psi) = \left(1 + \frac{\nu -1}{2}\right)|\psi|^{\nu -1} \nabla \psi + \frac{\nu -1}{2}|\psi|^{\nu-3}\psi^2 \nabla\overline{\psi}.
\end{equation}
Using Hölder inequality, we immediately obtain
\begin{equation}\label{eq:L^r' control}
\| |\psi|^{\nu-1}\psi\|_{L^{r'}(\R^d)} \le \| |\psi|^{\nu-1}\|_{L^{\tilde r}(\R^d)}\|\psi\|_{L^r(\R^d)} \le \|\psi\|_{L^r(\R^d)}^{\nu},
\end{equation}
where
\begin{equation}\label{rtilde}
\frac{1}{\tilde{r}} = \frac{1}{r'} - \frac{1}{r} = \frac{\nu -1}{\nu +1} \quad \mbox{which gives}\;\; \tilde{r}(\nu - 1) = \nu +1 =r.
\end{equation}
Analogously, we have from \eqref{eq:gradient} that
\[ \|\nabla( |\psi|^{\nu-1}\psi)\|_{L^{r'}(\R^d)} \lesssim \|\psi\|_{L^r(\R^d)}^{\nu-1}\|\nabla \psi\|_{L^r(\R^d)}.\]
Collecting all, we have
\begin{equation}\label{eq:a priori bound}
 \| |\psi|^{\nu-1}\psi\|_{L_t^{q'}(\R;W^{1,r'}(\R^d))} \lesssim \|\psi\|_{L_t^{\tilde{q}(\nu-1)}(\R; L^r(\R^d))}^{\nu-1}\|\psi\|_{L_t^q(\R; W^{1,r}(\R^d))},
 \end{equation}
where $\tilde{q}$ satisfies under the condition \eqref{H1 LWP} that
\begin{equation}\label{qtilde}
\frac{1}{\tilde{q}} = \frac{1}{q'} - \frac{1}{q} = \frac{(2-d)\nu +2 + d}{2(\nu +1)} > 0 \quad \mbox{which gives}\;\; q < \tilde{q}(\nu - 1).
\end{equation}
%
Under the condition $q < \tilde{q}(\nu - 1)$, a direct computation, in addition to Sobolev embedding ($H^1(\R^d) \hookrightarrow L^r(\R^d)$), yields
\begin{equation}\label{a priori bound}
\begin{aligned}
\| \psi\|_{L_t^{\tilde q(\nu-1)}(\R;L^r(\R^d))}^{\nu-1} \lesssim&~{} \left(\int_\R \|\psi(t)\|_{H^1(\R^d)}^{\tilde{q}(\nu-1) - q}\|\psi(t)\|_{L^r(\R^d)}^{q} \; dt \right)^{\frac{1}{\tilde q}}\\
\ls&~{} \| \psi\|_{L_t^\infty (\R;H^1(\R^d))}^{\frac{\tilde q(\nu-1)-q}{\tilde q}} \|\psi\|_{L_t^q(\R; L^r(\R^d))}^{\frac{q}{\tilde q}} \\
\lesssim &~{} \|\psi\|_{X^1}^{\nu-1},
\end{aligned}
\end{equation}
thus we conclude that
\begin{align*}
\|\Gamma(\psi)\|_{X^1} &\le  A\| \psi_{c,0}\|_{H^1(\R^d)} + C_1 \| \psi\|_{X^1}^{\nu},
\end{align*}
for some constant $C_1 > 0$. By taking $\epsilon, \delta > 0$ satisfying
\begin{equation}\label{eq:well-define}
A\epsilon + C_1 \delta^{\nu} < \delta,
\end{equation}
we claim that the map $\Gamma$ is well-defined from $X_{\delta}$ to itself. 

Now, we show the map $\Gamma$ is contractive with respect to the metric $d$. Let $\psi_1$ and $\psi_2$ satisfy \eqref{integral equation} with $\psi_1(0,x) = \psi_2(0,x)$. A direct computation gives
\[|\psi_1|^{\nu-1}\psi_1 - |\psi_2|^{\nu-1}\psi_2 = |\psi_1|^{\nu-1}(\psi_1 - \psi_2) + (|\psi_1|^{\nu-1} - |\psi_2|^{\nu-1})\psi_2,\]
which, in addition to the Mean Value theorem for complex-valued functions, implies
\[\left||\psi_1|^{\nu-1}\psi_1 - |\psi_2|^{\nu-1}\psi_2\right| \lesssim (|\psi_1|^{\nu-1} + |\psi_2|^{\nu-1})|\psi_1 - \psi_2|.\]
Then, similarly as in \eqref{eq:L^r' control} and \eqref{a priori bound}, we have
\[\begin{aligned}
d\left(\Gamma(\psi_1),\Gamma(\psi_2)\right) =&~{}\|\Gamma(\psi_1)-\Gamma(\psi_2)\|_{L_t^{\infty}(\R; L^2(\R^d))} + \|\Gamma(\psi_1)-\Gamma(\psi_2)\|_{L_t^q(\R; L^r(\R^d))}\\
\lesssim&~{} \|(|\psi_1|^{\nu-1} + |\psi_2|^{\nu-1})|\psi_1 - \psi_2|\|_{L_t^{q'}(\R; L^{r'}(\R^d))}\\
\lesssim&~{}(\|\psi_1\|_{L_t^{\tilde q(\nu-1)}(\R; L^r(\R^d))}^{\nu-1} + \|\psi_2\|_{L_t^{\tilde q(\nu-1)}(\R; L^r(\R^d))}^{\nu-1})\|\psi_1 - \psi_2\|_{L_t^q(\R; L^r(\R^d))}\\
\le&~{}C_2(\|\psi_1\|_{X^1}^{\nu-1} + \|\psi_2\|_{X^1}^{\nu-1})\|\psi_1 - \psi_2\|_{L_t^q(\R; L^r(\R^d))}\\
\le&~{}2C_2\delta^{\nu-1} d(\psi_1,\psi_2).
\end{aligned}\]
By taking $\delta > 0$ satisfying $2C_2 \delta^{\nu-1} < \frac12$ and \eqref{eq:well-define},
we complete the proof of the global well-posedness of \eqref{gNLS-1} in $H^1(\R^d)$\footnote{The continuous dependence on initial data follows from the analogous way.}. 

Now, we prove that the solution $\psi_c(t)\in C(\R;H^1(\R^d))$ to \eqref{integral equation}, that we found above, scatters to a free solution as $t\rightarrow\infty$\footnote{An analogous way also allows the same conclusion when $t \to -\infty$.}.  Let us define scattering state $\psi_{J,+}^{(c)}$ by the formula
\begin{align*}
\psi_{J,+}^{(c)}:=\psi_{c,0}- i \kappa \int_0^\infty e^{-is\mathcal{H}_J^{(c)}(-\Delta)} \left(|\psi_c|^{\nu-1}\psi_c\right)(s) \; ds.
\end{align*}
Then, an analogous argument as in the proof of \eqref{eq:a priori bound} with \eqref{a priori bound}, we show
\begin{align*}
\left\| \int_{t_1}^{t_2}e^{-is\mathcal{H}_J^{(c)}(-\Delta)} \left(|\psi_c|^{\nu-1}\psi_c\right)(s) \; ds \right\|_{H^1(\R^d)} \ls&~{}  \big\| |\psi_c|^{\nu-1}\psi_c \big\|_{L_t^{q'}([t_1, t_2); W^{1,r'}(\R^d))}\\
 \ls&~{}   \| \psi_c\|_{L_t^\infty([t_1,t_2); H_x^1(\R^d)}^{\frac{\tilde q(\nu-1)-q}{\tilde q}} \|\psi_c\|_{L_t^q([t_1,t_2); W_x^{1,r}(\R^d)}^{1+ \frac{q}{\tilde q}},
\end{align*}
which says $\psi_{J,+}^{(c)} \in H^1(\R^d)$, and the left-hand side tends to $0$ as $t_1,t_2\rightarrow\infty$.
This immediately shows
\begin{align*}
\left\|\psi_c(t) - e^{-it\mathcal{H}_J^{(c)}(-\Delta)}\psi_{J,+}^{(c)}\right\|_{H^1(\R^d)}
=\left\| \int_t^\infty e^{-i(t-s)\mathcal{H}_J^{(c)}(-\Delta)} \left(|\psi_c|^{\nu-1}\psi_c\right)(s) \; ds \right\|_{H^1(\R^d)} \rightarrow 0
\end{align*}
as $t\rightarrow\infty$, which completes the proof.





  \providecommand{\bysame}{\leavevmode\hbox to3em{\hrulefill}\thinspace}
\providecommand{\MR}{\relax\ifhmode\unskip\space\fi MR }
\providecommand{\MRhref}[2]{%
  \href{http://www.ams.org/mathscinet-getitem?mr=#1}{#2}
}
\providecommand{\href}[2]{#2}

\end{document}